\newtheorem{lemma}{Lemma}[section]
\newtheorem{proposition}[lemma]{Proposition}
\newtheorem{theorem}[lemma]{Theorem}
\newtheorem{corollary}[lemma]{Corollary}
\newtheorem{theoremnn}{Theorem}
\newcommand{\Wlg}{Without loss of generality }
\renewcommand{\tilde}[1]{\widetilde{#1}}
\newcommand{\m}{\mathfrak{m}}
\DeclareMathOperator{\Transp}{Transp}
\DeclareMathOperator{\A}{A}
\DeclareMathOperator{\C}{C}
\DeclareMathOperator{\D}{D}
\DeclareMathOperator{\E}{E}
\DeclareMathOperator{\EU}{EU}
\DeclareMathOperator{\Sp}{Sp}
\DeclareMathOperator{\Ep}{Ep}
\DeclareMathOperator{\diag}{diag}
\DeclareMathOperator{\sdiag}{sdiag}
\DeclareMathOperator{\M}{M}
\DeclareMathOperator{\N}{N}
\DeclareMathOperator{\GL}{GL}
\DeclareMathOperator{\G}{G}
\newcommand*\rectangled[1]{%
  {\tikz[baseline=(R.base)]\node[draw,rectangle,inner sep=0.5pt](R){\scriptsize\rmfamily{#1}};\!}
}
\def\blfootnote{\gdef\@thefnmark{}\@footnotetext}
\title{Overgroups of elementary block-diagonal subgroups 
in the classical symplectic group over an arbitrary commutative ring}
\author{Alexander Shchegolev\\\normalsize{Saint Petersburg State University}\\\normalsize{ iRyoka@gmail.com}}
\date{}
\begin{document}
\maketitle
\vspace{-1.5cm}
\blfootnote{Research is supported by the Russian Science Foundation grant 17-11-01261.}

\begin{abstract}
    In this paper we prove a sandwich classification theorem for subgroups of the classical 
    symplectic group over an arbitrary commutative ring $R$ that contain the elementary
	block-diagonal (or subsystem) subgroup $\Ep(\nu, R)$ corresponding to a unitary equivalence
	realation $\nu$ such that all self-conjugate equivalence classes of $\nu$ are of size at least 4
	and all not-self-conjugate classes of $\nu$ are of size at least 5. Namely, given a subgroup $H$
	of $\Sp(2n, R)$ such that $\Ep(\nu, R) \le H$ we show that there exists a unique exact major
	form net of ideals $(\sigma, \Gamma)$ over $R$ such that
	$\Ep(\sigma, \Gamma) \le H \le \N_{\Sp(2n,R)}(\Sp(\sigma, \Gamma))$. Further,
	we describe the normalizer $\N_{\Sp(2n,R)}(\Sp(\sigma, \Gamma))$ in terms of 
	congruences.
\end{abstract}
    
\section{Introduction}
This paper describes the overgroups $H$ of the elementary symplectic block-diagonal
subgroup $\Ep(\nu,R)$ of type $\nu$ of the classical 
symplectic group $\Sp(2n,R)$ over a commutative ring $R$, 
under the assumption that the minimal size of a self-conjugate block
of $\Ep(\nu,R)$ is at least 4 and the minimal size of a non-self-conjugate block of $\Ep(\nu, R)$ is at least 5. 
The main result is the following sandwich classification theorem: let $\nu$, $R$, $H$, $\Ep(\nu, R)$ and $\Sp(2n, R)$
be as above. Then there exists a unique major exact form net of ideals $(\sigma, \Gamma)$ over 
the ground ring such that $H$ fits into the sandwich
\[
	\Ep(\sigma, \Gamma) \le H \le 
	\N_{\Sp(2n,R)}(\Sp(\sigma, \Gamma)),
\]
where $\N_{\Sp(2n,R)}(\Sp(\sigma, \Gamma))$ denotes the normalizer in $\Sp(2n,R)$ 
of the form net subgroup $\Sp(\sigma, \Gamma)$ 
of $\Sp(2n,R)$ of level $(\sigma, \Gamma)$ and $\Ep(\sigma, \Gamma)$ 
the elementary form net subgroup of $\Sp(\sigma, \Gamma)$.

To put this result into context we provide a brief overview of related
results describing the subgroup structure of linear 
groups over fields. In \cite{DynMaxClass1957, DynSimisimpleSubalg1957} Dynkin determined
the maximal closed connected subgroups of classical algebraic groups over
$\mathbb{C}$. In particular, he showed that all reductive maximal closed
connected subgroups are precisely the stabilizers of totally isotropic
or non-degenerate subspaces. Similar results for classical groups over algebraically closed fields 
of positive characteristic were 
obtained by Gary Seitz \cite{SeitzMaxClass1987} and for exceptional groups by Donna Testerman \cite{TestermanIrred1988}.
In the papers \cite{AschMaxFCG1984, AschStrFG1985,
AschFSG1986} Michael Asch\-bacher described the maximal subgroups of finite 
simple classical groups. The subgroup structure theorem of Aschbacher says that every maximal
subgroup of a finite simple classical group belongs to either one of 8
explicitly defined classes $\mathcal{C}_1$---$\mathcal{C}_8$ of large 
subgroups or to the class $\mathcal{S}$ of almost simple groups in irreducible
representations. An exposition of results regarding the members of Aschbacher classes
in finite classical groups can be found in the book of Kleidman 
and Liebeck \cite{KleidLiebSubgrBook1990}. 
The Aschbacher classes which are relevant for us are the classes
$\mathcal{C}_1$ and $\mathcal{C}_2$. The subgroups of class $\mathcal{C}_1$ are 
stabilizers of proper totally isotropic or non-degenerate submodules of the module
on which the group is acting. The subgroups of class $\mathcal{C}_2$ are the stabilizers
of direct decompositions of that module into the summands of a fixed dimension.
Given a member $H$ of an Aschbacher class the book of Kleidman and Liebeck provides a recipe 
for constructing a maximal overgroup of $H$ that is in turn also a member of some
not necessarily the same Aschbacher class. Unfortunately, a similar result for 
classical groups over arbitrary 
commutative rings (or unitary groups over form rings) has not yet
been obtained. However, it is possible to describe 
the lattice structure of the set of all overgroups of a given member of an Aschbacher class 
or of an appropriate modified notion thereof
in terms of the structure of the ground ring. 

The literature contains several modifications of the notion of Aschbacher classes.
The current paper focuses on a specific simultaneous modification of the Aschbacher classes 
$\mathcal{C}_1$ and $\mathcal{C}_2$ which we call \textit{block-diagonal subgroups}.
These subgroups are the stabilizers of certain direct decompositions of the quadratic module 
on which the unitary group is acting into totally isotropic
or non-degenerate submodules. As our methods only employ the elementary 
matrices contained in these block-diagonal subgroups, we will describe
the overgroups of elementary block-diagonal subgroups instead of full block-diagonal
subgroups. The problem of describing overgroups of elementary block-diagonal
matrices was first considered for the case of the general linear group
in papers \cite{BorVavNarGLDed1970, BorVavDiag1982, VavDiagSemiloc1983, BorVavGL,
VavDedArithm1987} of Z.I. Borewicz, N. A. Vavilov and W. Narkiewicz
over commutative rings and rings satisfying a stable rank condition. 
These papers do not use localization. Over quasi-finite rings
the problem is solved in \cite{BakStepNets} using localization.
For the case of other classical groups over a commutative ring with 2 invertible,
this classification was generalized in chapter V of the habilitation
of Nikolai Vavilov, although complete proofs were only published much later in
\cite{VavOrthog} for the split orthogonal case and \cite{VavSympl} for the
symplectic case. Important auxiliary results can be also found in
\cite{VavSymplComm1993}, \cite{VavOrthogSMZ88} and in the references therein.
Roughly the main results in the above references can be described as
follows. Let $\G$ denote a Chevalley group of type $A_l, B_l, C_l$ or $D_l$ over 
a commutative ring $R$ or the general linear group $\GL(n, R)$ over a quasi-finite
ring $R$. If $\G \neq \G(A_l, R)$ or $\GL(n,R)$, assume that $2 \in R^*$. Let 
$H$ be a subgroup of $\G$ containing an elementary block-diagonal
subgroup of $G$ whose minimal block size is sufficiently large. Then
there exists a unique net of ideals $\sigma$ such that $H$ fits into the sandwich
\[ 
    \E(\sigma) \le H \le \N(\sigma), 
\]
where $\E(\sigma)$ is the elementary subgroup associated with $\sigma$ and
$\N(\sigma)$ is the normalizer in $G$ of the net subgroup $\G(\sigma)$. In
\cite{BorVavGL} such a description was called \textit{standard}. We shall
refer to it as \textit{standard sandwich classification}.
  
Unfortunately, due to known counterexamples the standard sandwich
classification in \cite{VavSympl} for the symplectic group $\Sp(2n,R)$ 
when 2 is invertible in the ground ring $R$ does not generalize to
the case of an arbitrary commutative ring. 
The obstacle is that the notion of a net of ideals is not fine enough when $2$
is not invertible in the ring and has to be replaced by the notion of a form net 
of ideals. This is analogous to the situation encountered in the sandwich 
classification of subgroups of Bak unitary groups \cite{BakThesis,BakVavStructure2000}, which are normalized by the 
elementary subgroup. Here the 
notion of ideal had to be refined by the notion of form ideal
when 2 is not invertible in the ground ring.
Significant work on developing the concept of a form net of ideals in the context 
of even unitary groups over fields and simple Artinian 
rings was done already by E. Dybkova \cite{DybFormNets1998, DybDiagUni1999,
DybkovaBorHypUni2006, DybArtinI2007, DybDiagUni2009}. 
Further review of known results on the problem of describing overgroups of 
subsystem subgroups in classical-like and some exceptional groups can be
found in \cite{VavShchegLevelsEng2012}. As mentioned there, 
no steps have yet been taken towards describing overgroups of subsystem
subgroups in classical (other than $GL$) groups over an arbitrary 
commutative ring or Bak unitary groups over form rings other than simple
Artinian form rings. 

This paper is the first one in the series devoted to the describing the 
overgroups of elementary block-diagonal subgroups of even unitary groups
over quasi-finite rings which constitutes the PhD thesis \cite{ShchegBielThesis}
of the author. The statement of the result for the even unitary case can also be found
in \cite{ShchMainRes2016}. Now we state the main result of this paper.

Let $R$ denote a commutative ring and $\Sp(2n, R)$ the classical simplectic group 
with coefficients in $R$. Let $\Ep(2n, R)$ denote the elementary subgroup of $\Sp(2n, R)$.
Given a form net of ideals $(\sigma, \Gamma)$ denote by $\Ep(\sigma, \Gamma)$ the 
subgroup of $\Ep(2n, R)$ generated by all short and long symplectic transvections $T_{ij}(\xi),  T_{i,-i}(\alpha)$, 
where $i,j \in I$, $i \neq \pm j$, $\xi \in \sigma_{ij}$ and $\alpha \in \Gamma_i$. The central result of this paper is the following theorem (see Sections \ref{sec:notations} and \ref{sec:symplss:prelim} for the definitions
omitted here). 

\begin{theoremnn}
    \label{theorem:spgen:main}
    Let $\nu$ be a unitary equivalence relation on the index set $I$ such that
    $h(\nu) \geq (4,5)$.
    Let $H$ be a subgroup of $\Sp(2n,R)$ such that $\Ep(\nu, R) \le H$. Then there exists a unique 
    exact form net of ideals $(\sigma, \Gamma) \ge [\nu]_R$ such that 
    \[ 
        \Ep(\sigma, \Gamma) \le H \le \N_{\Sp(2n,R)}(\Sp(\sigma, \Gamma)). 
    \]
\end{theoremnn}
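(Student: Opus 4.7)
The plan is to follow the standard sandwich strategy adapted to form nets of ideals, in three stages: first extract a canonical level $(\sigma, \Gamma)$ from $H$; then prove the upper normalizer bound; and finally prove the lower elementary bound by explicit transvection computations. Uniqueness of $(\sigma, \Gamma)$ will then be immediate from the canonical nature of the first stage.

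To define $(\sigma, \Gamma)$ I would proceed pair-by-pair: for $i \ne \pm j$ let $\sigma_{ij}$ consist of all $\xi \in R$ such that $T_{ij}(\xi)$ is ``reachable'' in $H$ (for example, lies in a suitable coset of $H$ modulo $\Ep(\nu, R)$), and define $\Gamma_i$ analogously from the long transvections $T_{i,-i}(\alpha) \in H$. Because $\Ep(\nu, R) \le H$, one automatically has $(\sigma, \Gamma) \ge [\nu]_R$; the ideal axioms and the multiplicative relations $\sigma_{ij}\sigma_{jk} \subseteq \sigma_{ik}$ should follow from the standard Chevalley commutator identities among symplectic transvections, while exactness and the compatibility of $\Gamma$ with $\sigma$ on self-conjugate indices would be verified in the same way.

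For the upper bound $H \le \N_{\Sp(2n,R)}(\Sp(\sigma, \Gamma))$, I would take an arbitrary $h \in H$ and expose its entries by conjugating the standard generators of $\Ep(\nu, R)$ by $h$: each $h T_{ij}(\xi) h^{-1}$ remains in $H$, and iterated commutators with further transvections force the entries of $h$ into the relevant $\sigma_{ij}$ and $\Gamma_i$. This is exactly the congruence condition cutting out $\N_{\Sp(2n,R)}(\Sp(\sigma, \Gamma))$, which the paper makes fully explicit. The block-size hypothesis $h(\nu) \ge (4, 5)$ guarantees that in every block there are enough ``free'' indices to execute these commutator manipulations without degenerate cancellations.

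The technical core, and the main expected obstacle, is the lower bound $\Ep(\sigma, \Gamma) \le H$. Here an extraction argument is required: every $\xi \in \sigma_{ij}$ must be shown to produce an honest $T_{ij}(\xi) \in H$, and every $\alpha \in \Gamma_i$ an honest $T_{i,-i}(\alpha) \in H$. Non-self-conjugate classes of size at least $5$ supply the third index needed for classical $[T_{ij}(\xi), T_{jk}(\eta)] = T_{ik}(\xi\eta)$ manoeuvres, and self-conjugate classes of size at least $4$ support the symplectic analogue involving the hyperbolic form. The hardest step will be the extraction of long transvections $T_{i,-i}(\alpha)$: over a ring in which $2$ is not invertible, these are not obtainable as commutators of short transvections, so they must be constructed directly while simultaneously verifying that $\Gamma_i$ satisfies the closure axioms of a form ideal rather than merely those of an ordinary ideal. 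This is precisely where the passage from nets to form nets of ideals becomes indispensable, and where the argument of \cite{VavSympl} does not carry over verbatim.
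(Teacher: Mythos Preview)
Your outline reverses the two inclusions and misidentifies where the difficulty lies.

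First, with the paper's definition of the associated form net --- $\sigma_{ij}=\{\xi\in R: T_{ij}(\xi)\in H\}$ for $i\ne\pm j$ and $\Gamma_i=\{\alpha\in R: T_{i,-i}(\alpha)\in H\}$ --- the inclusion $\Ep(\sigma,\Gamma)\le H$ is \emph{tautological}: every generator of $\Ep(\sigma,\Gamma)$ is in $H$ by definition. So the ``technical core'' you describe (extracting an honest $T_{ij}(\xi)\in H$ from $\xi\in\sigma_{ij}$) is empty. What does require work at this stage is verifying that this $(\sigma,\Gamma)$ really is an exact form net of ideals, but that is handled by Steinberg relations and the block-size hypothesis (Lemma~\ref{lemma:symplss:ass:net:is:a:net}, Proposition~\ref{prop:spgen:sigma:g:is:almost:a:net}).

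Second, your description of the upper bound is wrong. Membership in $\N_{\Sp(2n,R)}(\Sp(\sigma,\Gamma))$ is \emph{not} the condition that the entries $a_{ij}$ lie in $\sigma_{ij}$; that would be $a\in\Sp(\sigma)$, which is false for general $a\in H$. By Theorem~\ref{theorem:symplss:transpdescr} the normalizer equals the transporter $\Transp(\Ep(\sigma,\Gamma),\Sp(\sigma,\Gamma))$, so what must be shown is that for every $a\in H$ and every $(\sigma,\Gamma)$-elementary transvection $T_{sr}(\xi)$, the root element $b=aT_{sr}(\xi)a^{-1}$ lies in $\Sp(\sigma,\Gamma)$, i.e.\ $b_{ij}\in\sigma_{ij}$ and $S_{i,-i}(b)\in\Gamma_i$ for all $i,j$. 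This is where the real extraction-of-transvections work happens, and it is the upper bound, not the lower, that consumes Sections~\ref{sec:spgen:stsetting}--\ref{sec:spgen:localization}.

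Third, you omit the mechanism that makes this extraction go through over an arbitrary commutative ring: localization and patching. The paper reduces to Noetherian $R$, then for each maximal ideal $\m$ passes to the local ring $R_\m$ via the ``standard setting'' $(R_\m,F_\m(R),F_\m(R\setminus\m))$, proves the transporter inclusion there (Theorem~\ref{theorem:spgen:mainforlocal}) by a case analysis exploiting that every element of $R_\m$ is either a unit or in the Jacobson radical, and then patches the local inclusions back to $R$ using unimodularity. The commutator tricks you sketch are ingredients of the local argument, but by themselves they do not give the result over a general ring; without localization you have no control over which entries of $a$ are units, and the parabolic/root-element reductions (Lemmas~\ref{lemma:spgen:short:alpha:beta:not:selfconj}--\ref{lemma:spgen:long:local:not:selfconj}) cannot be organized.
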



Another important result is that the normalizer $N_{\Sp(2n,R)}(\Sp(\sigma, \Gamma))$ 
coincides with the transporter 
\begin{align*}
	\Transp_{\Sp(2n,R)}(\Ep(\sigma, \Gamma), &\Sp(\sigma, \Gamma)) = \\&
	\{ 
		a \in \Sp(2n, R) \;|\; \text{for all } \tau \in \Ep(\sigma, \Gamma) \; a \tau a^{-1} \in \Sp(\sigma, \Gamma)
	\}
\end{align*}
in $\Sp(2n,R)$ from $\Ep(\sigma, \Gamma)$ to 
$\Sp(\sigma, \Gamma)$ and can be described in terms of congruences.

\begin{theoremnn}
    \label{theorem:symplss:transpdescr}
    Let $\nu$ be a unitary equivalence relation on the index set $I$ such that all 
    the equivalence classes of $\nu$ contain at least 3 elements. Let $(\sigma, \Gamma)$ be a form net of ideals
    over $R$ such that $[\nu]_R \le (\sigma, \Gamma)$. Then 
    the normalizer $\N_{\Sp(2n, R)}(\Sp(\sigma, \Gamma))$
    coincides with the transporter
    $\Transp_{\Sp(2n, R)}(\Ep(\sigma, \Gamma), \Sp(\sigma, \Gamma))$
    and consists precisely of all matrices $a$ in $\Sp(2n,R)$ such that 
    the following three properties hold:
    \begin{enumerate}
        \item[\textrm{\textnormal{(T1)}}] $a_{ij} \sigma_{jk} a'_{kl} \le \sigma_{il}$ for all $i,j,k,l \in I$
        \item[\textrm{\textnormal{(T2)}}] $a_{ij}^2 \sigma_{jk}^\rectangled{2} S_{k,-k}(a^{-1}) \in \Gamma_i$ for all $i,j,k \in I$
        \item[\textrm{\textnormal{(T3)}}] $a_{ij}^2 \Gamma_j \le \Gamma_i$ for all $i,j \in I$,
    \end{enumerate}    
    where $\sigma_{jk}^\rectangled{2}$ stands for $\{ \xi^2 \;|\; \xi \in \sigma_{jk} \}$.
\end{theoremnn}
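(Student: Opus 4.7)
The plan is to establish the chain of inclusions
\[
    \N_{\Sp(2n,R)}(\Sp(\sigma,\Gamma)) \;\subseteq\; \Transp_{\Sp(2n,R)}(\Ep(\sigma,\Gamma), \Sp(\sigma,\Gamma)) \;\subseteq\; X \;\subseteq\; \N_{\Sp(2n,R)}(\Sp(\sigma,\Gamma)),
\]
where $X$ denotes the set of all $a \in \Sp(2n, R)$ satisfying (T1), (T2), (T3). This forces all three sets to coincide. The leftmost inclusion is immediate from $\Ep(\sigma,\Gamma) \le \Sp(\sigma,\Gamma)$: any $a$ normalizing $\Sp(\sigma,\Gamma)$ conjugates the subgroup $\Ep(\sigma,\Gamma)$ into $\Sp(\sigma,\Gamma)$, hence lies in the transporter.

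For the middle inclusion I would take $a$ in the transporter and test it against the generators of $\Ep(\sigma,\Gamma)$. Conjugating a short transvection $T_{jk}(\xi)$ with $j \neq \pm k$ and $\xi \in \sigma_{jk}$ by $a$ yields a matrix whose $(i,l)$-entry contains the contribution $a_{ij}\xi a'_{kl}$; reading off the condition that this conjugate lies in $\Sp(\sigma,\Gamma)$ over all admissible $\xi$ and all index pairs is precisely (T1). Conjugating a long transvection $T_{j,-j}(\alpha)$ with $\alpha \in \Gamma_j$ produces off-diagonal entries involving $a_{ij}^2\alpha$, yielding (T3), while the entries on positions of the form $(i,-i)$ contain mixed contributions involving squares of entries of $a$ coupled with the symplectic quantities $S_{k,-k}(a^{-1})$, yielding (T2). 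Since the short and long transvections generate $\Ep(\sigma,\Gamma)$, no further generators need to be checked.

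For the rightmost inclusion, take $a$ satisfying (T1)--(T3) and arbitrary $b \in \Sp(\sigma,\Gamma)$, and expand
\[
    (aba^{-1})_{il} = \sum_{j,k} a_{ij}\, b_{jk}\, (a^{-1})_{kl}.
\]
Using the symplectic identity to replace $a^{-1}$ by the appropriate expression involving $a'$, each summand becomes a product $a_{ij} b_{jk} a'_{kl}$, and since $b_{jk} \in \sigma_{jk}$, condition (T1) places it in $\sigma_{il}$, giving the ideal part of the level of $aba^{-1}$. The form-parameter condition is handled by splitting the double sum according to whether $(j,k)$ is of the form $(j,-j)$ — in which case $b_{j,-j} \in \Gamma_j$ and (T3) supplies closure — or $j \neq -k$, where contributions pair up as $a_{ij}^2 \sigma_{jk}^\rectangled{2} S_{k,-k}(a^{-1})$ and are absorbed by (T2) together with the form-parameter axioms for $\Gamma$. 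The hypothesis that each equivalence class of $\nu$ has at least three elements supplies enough indices to perform this splitting cleanly. To upgrade $a\Sp(\sigma,\Gamma)a^{-1} \subseteq \Sp(\sigma,\Gamma)$ to equality, one verifies that (T1)--(T3) are preserved under $a \mapsto a^{-1}$ via the identity $(a^{-1})' = a$ built into the symplectic group structure.

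The main obstacle is the last step, specifically the verification of the form-parameter part of the level of $aba^{-1}$ and of condition (T2) itself in the middle step. Tracking which cross-terms in the double sum genuinely contribute to $\Gamma_i$ and which collapse into $\sigma_{i,-i}$ under the symmetrization coming from the symplectic involution is the most delicate part of the argument. The form-parameter axioms — closure under addition, sandwiching by arbitrary ring elements $r\Gamma r \subseteq \Gamma$, and absorption of the symmetric part of the ideal — must be invoked carefully, and this is precisely the place where the three-elements-per-class assumption on $\nu$ together with the definition of $\Sp(\sigma,\Gamma)$ via level congruences is essential for the bookkeeping to close.
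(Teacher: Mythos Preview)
Your three-inclusion chain matches the paper's architecture, and the leftmost inclusion is indeed trivial. But the middle and rightmost steps, as you describe them, contain real gaps.

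\textbf{Middle inclusion} ($\Transp \subseteq X$). Conjugating a single short transvection does not isolate the term you want: the $(i,l)$ entry of $aT_{jk}(\xi)a^{-1}$ is $\delta_{il}+a_{ij}\xi a'_{kl}-\varepsilon_j\varepsilon_k a_{i,-k}\xi a'_{-j,l}$, a sum of two coupled terms, so you cannot read off $a_{ij}\sigma_{jk}a'_{kl}\le\sigma_{il}$ directly. The paper instead looks at entries of conjugates of \emph{products} $T_{rs}(\xi)T_{st}(\zeta)$ (and $T_{rs}(\xi)T_{s,-s}(\alpha)$) and subtracts the single-transvection contributions to isolate $a_{ir}\xi\zeta a'_{tj}$. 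This gives (T1) only for $r\neq\pm s,\pm t$, $s\neq\pm t$; the diagonal cases $t=\pm r$ require inserting an auxiliary index supplied by the three-elements hypothesis on $\nu$. (So that hypothesis is used here, not in the rightmost inclusion where you placed it.) Your derivation of (T3) is also misattributed: the off-diagonal entries of $aT_{j,-j}(\alpha)a^{-1}$ are $a_{ij}\alpha a'_{-j,l}$, not $a_{ij}^2\alpha$. Both (T2) and (T3) come from the \emph{lengths} $S_{i,-i}$ of conjugates, and again require comparing products with single transvections to separate, e.g., $a_{ij}^2\xi^2\zeta^2 S_{m,-m}(a^{-1})$ from the remaining terms.

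\textbf{Rightmost inclusion} ($X \subseteq \N$). The ideal part is fine, but ``splitting the double sum according to whether $(j,k)=(j,-j)$'' does not describe how the form-parameter condition is checked: $S_{i,-i}(aba^{-1})$ is not a simple double sum over entries of $b$. The paper proves a standalone proposition showing that (T1) alone forces
\[
S_{i,-i}(aga^{-1})\equiv\sum_{k} a_{ik}^2\Bigl(S_{k,-k}(g)+S_{k,-k}(a^{-1})+\sum_{t} g_{kt}^2 S_{t,-t}(a^{-1})\Bigr)\pmod{\Gamma_i},
\]
obtained from the length-of-product formula by repeatedly showing that doubled cross-terms land in $2\sigma_{i,-i}\le\Gamma_i$. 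Once this congruence is in hand, (T3) handles the first summand, (T2) handles the second and third, and the proof closes. Without this formula your bookkeeping does not close.
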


The rest of this paper is organized as follows. In Sections \ref{sec:notations} and
\ref{sec:symplss:prelim} we give all the required notation and definitions. 
In Section \ref{sec:symplss:assoc:net:and:norm} we construct the net associated with a
 subgroup and prove Theorem \ref{theorem:symplss:transpdescr}. 
 Sections \ref{sec:spgen:stsetting}---\ref{sec:spgen:localization}
comprise the proof of Theorem
\ref{theorem:spgen:main}. We will show that 
$H \le \Transp_{\Sp(2n, R)}(\Ep(\sigma, \Gamma), \Sp(\sigma, \Gamma))$, 
where $(\sigma,\Gamma)$ is the net associated with $H$. Given a matrix $a \in H$ and 
an elementary 
symplectic transvection $T_{sr}(\xi) \in \Ep(\sigma,\Gamma)$ it will be shown that 
$b = a T_{sr}(\xi) a^{-1} \in \Sp(\sigma, \Gamma)$. By definition of $\Sp(\sigma, \Gamma)$
the inclusion $b \in \Sp(\sigma, \Gamma)$ is equivalent to the following series of inclusions:
\begin{align*}
	T_{ij}(b_{ij}) \in H & \text{ for all } i \neq \pm j&
	T_{i,-i}(S_{i,-i}(b)) \in H & \text{ for all } i \in I,
\end{align*}
where $S_{i,-i}(b) = \sum_{j>0}b_{ij} b'_{j,-i}$. In order to prove these inclusions
we will express the matrices $T_{ij}(b_{ij})$ and $T_{i,-i}(S_{i,-i}(b))$ as products
of elements of $\Ep(\sigma, \Gamma)$ as well as marices $b$ and $b^{-1}$. We shall refer
to this procedure as \textit{extraction of transvections}. First, in Section 
\ref{sec:spgen:stsetting} we introduce a useful
abstraction that simplifies loclization based versions of extraction of transvections.
In Section \ref{sec:spgen:extract} we collect the results on extracting transvections 
from parabolic subgroups. In Sections \ref{sec:spgen:jac} and \ref{sec:spgen:local} we 
deal with the case of a local ground ring and the Section \ref{sec:spgen:localization} 
provides the proof of Theorem \ref{theorem:spgen:main} for an arbitrary commutative ring
using \textit{localization and patching}.

\section{Notations}
\label{sec:notations}

Throughout this paper we will adhere to the following notations and
conventions. By \textit{ring} we will always mean associative unital ring.
Given a ring $R$ and a natural number $n$ we will denote by $\M(n,R)$ the
full matrix ring of rank $n$ over $R$ and by $\GL(n,R)$ the group of
invertible elements of $\M(n,R)$. For any matrix $a$ in $\M(n,R)$ let $a_{ij}$ 
denote the entry of $a$ at the position $(i,j)$ and 
$a'_{ij}$ the corresponding entry of the matrix $a^{-1}$ inverse to $a$. We will
denote by $a^t$ the matrix transpose of $a$, i.e. the matrix in $\M(n,R)$
such that $(a^t)_{ij} = a_{ji}$. By $a_{i*}$ and $a_{*j}$ we will denote
the $i$'th row and $j$'th column of $a$ respectively. Naturally, $a'_{i*}$
and $a'_{*j}$ should be read as the $i$'th row and $j$'th column of
$a^{-1}$ respectively. We will also use the notation 
$\diag(\xi_1, \dots, \xi_n)$ for the diagonal matrix with entries 
$\xi_1, \dots \xi_n$ reading from the top-left corner and 
$\sdiag(\xi_1, \dots, \xi_n)$ for the skew-diagonal matrix with entries 
$\xi_1, \dots \xi_n$ reading from the top-right corner. When the rank
of the matrix ring is clear from the context, we will also denote by
$\diag$ the diagonal embedding of $R$ into $\M(n,R)$, i.e.
$\diag : R \rightarrow \M(n,R)$ is a ring homomorphism sending each
$\xi \in R$ to the diagonal matrix $\diag(\xi) = \diag(\xi, \dots, \xi)$.
We will denote by $e_n$ the unity of the matrix ring $\M(n,R)$. When the rank
is clear from the context, we will simply write $e$. The entries of $e$,
as an exception from the above convention, will be denoted by $\delta_{ij}$ 
(Kronecker delta), while $e_{ij}$ will stand for the corresponding 
standard matrix unit, i.e the matrix in $\M(n,R)$ whose $(i,j)$'th entry equals $1$ and whose other
entries are zero.
Given a ring morphism $\varphi : R \rightarrow Q$ we will denote by $\M_n(\varphi) = \M(\varphi)$ the induced
ring morphism of the matrix rings $\M(n, R)$ and $\M(n, Q)$. If we consider $\M(\varphi)$
as a morphism of the multiplicative monoids of $\M(n, R)$ and $\M(n, Q)$ then
its kernel is precisely the set $\M(n, R, \ker(\varphi)) = \{ a \in \M(n,R) \;|\; a_{ij} \equiv \delta_{ij} \mod \ker(\varphi) \text{ for all } i,j \}$. Note that 
$\GL(2n, R, \ker(\varphi)) = \GL(2n,R) \cap \M(2n, R, \ker(\varphi))$ is a normal
subgroup in $\GL(2n,R)$.

In our applications it is convenient to index the rows and columns of $2n \times 2n$ matrices
by the ordered set $I = I_{2n} = \{ 1, \dots, n , -n, \dots, -1 \}$. We equip
the poset $I$ with the sign map $\varepsilon : I \longrightarrow \{\pm 1\}$, defined by
\[ \varepsilon(i) = \begin{cases}
	+1 & i > 0 \\
	-1 & i < 0
\end{cases}.\]
For the sake of shortening formulas we will also denote $\varepsilon(i)$
by $\varepsilon_i$. 

Now consider an equivalence relation $\nu$ on the set $I$. If two
indices $i$ and $j$ are equivalent under $\nu$, we will write 
$i \sim^\nu j$ or just $i \sim j$ when the equivalence relation is clear 
from the context. The equivalence class of an index $i$ will be denoted 
by $\nu(i)$. Call $\nu$ \textit{unitary} if for any equivalent indices 
$i$ and $j$ the indices $-i$ and $-j$ are also equivalent. All the 
equivalence relations mentioned in this paper are unitary and thus 
we will sometimes omit the word ``unitary''. The index set $I$ can be 
decomposed as a disjoint union of equivalence classes: 
\[ 
    I = C_1 \sqcup C_2 \sqcup \dots \sqcup C_t. 
\] 
We introduce 
a left action of the group $\{\pm 1\}$ on the set $\mathcal{C}l(\nu) = \{C_1, \dots, C_t \}$ of all equivalence 
classes $C_l$ of $\nu$ by putting: 
$1 \cdot \nu(i) = \nu(i)$ and $-1 \cdot \nu(i) = \nu(-i)$, for any $i \in I$. 
Following \cite{VavSympl} we will call the classes stable under this action
\textit{self-conjugate} (i.e. the classes $C_l$ such that for every $i \in C_l$
one has also $-i \in C_l$). Accordingly the non-stable classes will be called
\textit{non-self-conjugate}.
 We will denote by $h(\nu)$ the ordered pair consisting 
of the minimum size (as a set) of all self-conjugate equivalence classes 
of $\nu$ and the minimum size (also as a set) of all non-self-conjugate
equivalence classes of $\nu$. Note, that an arbitrary equivalence relation 
does not necessary have equivalent classes of both types; therefore
$h(\nu)$ is an element in $\mathbb{N} \cup \{ \infty \} \times \mathbb{N} 
\cup \{ \infty \}$. We will always view $\mathbb{N} \cup \{ \infty \} \times \mathbb{N} \cup \{ \infty \}$ as a partially ordered set with the
\textit{product order}, i.e. $(a_1, b_1) \le (a_2, b_2)$ if and only if $a_1 \le a_2$ and $b_1 \le b_2$.

We call a $k$-tuple $(i_1, \dots, i_k)$ of indices in $I$ a \textit{$\C$-type base $k$-tuple 
\textup[of indices\textup]} if for each $1 \le r \neq s \le k$, we have $i_r \neq \pm i_s$ and 
$i_r \sim i_s \sim -i_s \sim -i_r$. 
Similarly, we call a $k$-tuple $(i_1, \dots, i_k)$ of indices in $I$ an \textit{$\A$-type 
base $k$-tuple \textup{[}of indices\textup{]}} if for each $1 \le r \neq s \le k$, we have
$i_r \neq \pm i_s$ and $i_r \sim i_s$. The condition $h(\nu) \geq (a,b)$ 
is equivalent to the condition that every index $i \in I$ can be included 
in either an $\A$-type base $b$-tuple, or a $\C$-type base 
$\left\lceil\frac{a}{2}\right\rceil$-tuple. This simple observation 
will be used repeatedly without specific mention in the rest of the paper.

Another convention concerns the elements of a localization of a ring.
Let $R$ be a unital associative ring and $S$ be a multiplicative set in $R$.
Let $S^{-1} R$ denote the localization of $R$ at $S$. Let $F : R \rightarrow S^{-1} R$
be the localization morphism. Let $\xi \in R$ and $s \in S$. By the fraction $\frac{\xi}{s}$
we will always denote the element $F(\xi) \cdot F(s)^{-1}$ of $S^{-1} R$ in contrast to
$\xi s^{-1}$ which refers to an element of $R$ and only makes sense if $s \in R^*$.
For example $\frac{\xi}{1}$ is synonymic to $F(\xi)$.

Finally, by angular brackets $\langle \cdot \rangle$ we will denote
the subgroups and ideals defined in terms of generators. The rest of the
notations are standard for the field of this research.

\section{Preliminaries}
\label{sec:symplss:prelim}
\paragraph{Symplectic group} Let $R$ be a commutative ring. Fix a natural
number $n$. Let $\Sp(2n,R)$ denote the classical symplectic group of 
rank $2n$. It is known that a matrix $a$ in $\GL(2n, R)$ belongs to
$\Sp(2n, R)$ if and only if the equality 
\begin{equation*}
	a'_{ij} = \varepsilon_i \varepsilon_j a_{-j,-i}
\end{equation*}
holds for all possible indices.
In future, this property will be used without reference. 
%

Given an element $\xi \in R$ and an index $i \in  I$ we will call 
the matrix 
\[ T_{i,-i}(\xi) = e + \xi e_{i,-i} \]
\textit{the \textup{[}elementary\textup{]} long \textup{[}symplectic\textup{]} transvection}.
Given an additional index $j \neq \pm i$ we will call the matrix
\[ T_{ij}(\xi) = e + \xi e_{ij} - \varepsilon_i \varepsilon_j \xi e_{-j,-i} \]
the \textit{\textup{[}elementary\textup{]} short \textup{[}symplectic\textup{]} transvection}. 

It's a well known fact, that all the long and short elementary symplectic transvections are
containd in $\Sp(2n, R)$ and satisfy the following relations known as the \textit{Steinberg relations}
for all $\xi,\zeta$ in $R$:
\begin{description}
    \item[\mdseries\rmfamily(R1)] $T_{ij}(\xi) = T_{-j,-i}(-\varepsilon_i \varepsilon_j \xi)$ for all $i \neq \pm j$
    \item[\mdseries\rmfamily(R2)] $T_{ij}(\xi) T_{ij}(\zeta) = T_{ij}(\xi + \zeta)$ for all $i \neq j$
    \item[\mdseries\rmfamily(R3)] $[T_{ij}(\xi), T_{hk}(\zeta)] = e$ for all $h \neq j,-i$ and $k \neq i, -j$
    \item[\mdseries\rmfamily(R4)] $[T_{ij}(\xi), T_{jh}(\zeta)] = T_{ih}(\xi \zeta)$ for all $i,h \neq \pm j$ and $i \neq \pm h$
    \item[\mdseries\rmfamily(R5)] $[T_{ij}(\xi), T_{j,-i}(\zeta)] = T_{i,-i}(2 \xi \zeta)$ for all $i$
    \item[\mdseries\rmfamily(R6)] $[T_{i,-i}(\xi), T_{-i,j}(\zeta)] = T_{ij}(\xi \zeta) T_{-j,j}(\varepsilon_i \varepsilon_j \xi \zeta^2)$ for all $i\neq \pm j$.
\end{description}
In future we will occasionally use these relations without a reference. 

For any matrix $g$ in $\Sp(2n,R)$, any short symplectic transvection $T_{sr}(\xi)$ and any 
long symplectic transvection $T_{s,-s}(\zeta)$, we call the matrices ${}^g T_{sr}(\xi) =
g T_{sr}(\xi) g^{-1}$ and ${}^g T_{s,-s}(\zeta) = g T_{s,-s}(\zeta) g^{-1}$
\textit{\textup{[}elementary\textup{]} short and long root elements} respectively.

\paragraph{Equivalence relations and block-diagonal subgroups}

Given a unitary equivalence relation $\nu$ on the index set $I$ we will 
call the subgroup
\[ 
    \Ep(\nu) = \Ep(\nu, R) = 
    \left\langle T_{i,-i}(\xi), T_{jk}(\xi) \;|\; i \sim -i, j \sim k, j \neq \pm k, \xi \in R \right\rangle
\]
the \textit{elementary block-diagonal subgroup of type $\nu$} in $\Sp(2n,R)$.
Note that $\Ep(\nu, R)$ does not necessarily consist of block-diagonal 
matrices. It can only be the case if all (or at least all but one of) the equivalence
classes of $\nu$ are non-self-conjugate. However, these groups behave like block-diagonal matrices
because for each such group $\Ep(\nu, R)$ there exists a permutation matrix $B$ in $\GL(2n, R)$, but
not necessarily in $\Sp(2n, R)$, such that $B \cdot \Ep(\nu, R) \cdot B^{-1}$  is block-diagonal in the usual sense.

From the point of view of Chevalley groups, the elementary block-diagonal
subgroup is precisely \textit{the elementary subsystem subgroup}. Namely, let 
$C_1, -C_{1}, \dots C_{s}, -C_{s}$ be all the non-self-conjugate classes, and
$C_{s+1}, \dots, C_{t}$ be the self-conjugate ones. Set $n_i = | C_i |$ 
for $1 \le i \le s$ and $l_i = |C_i|/2$ for $s+1 \le i \le t$. Then 
$n_1 + \dots + n_s + l_{s+1} + \dots + l_t = n$ and 
\[ 
    \Ep(\nu, R) \approx \E(n_1, R) \times \dots \times \E(n_s, R) \times
    \Ep(2l_{s+1}, R) \times \dots \times \Ep(2l_t, R), 
\]
where $\E(n_i, R)$ denotes the usual elementary subgroup of $\GL(n_i, R)$ 
appearing in the hyperbolic embedding and the product is meant as a 
product of linear groups. From the viewpoint of algebraic groups this is
exactly the elementary Chevalley group of type $\Delta$, where
\[ 
    \Delta = A_{n_1 -1} + \dots + A_{n_s -1} + C_{l_{s+1}} + \dots + C_{l_t}. 
\] 
The reader is referred to \cite{VavSympl} and the references therein for
further details. 

This analogy allows the following geometric interpretation of concepts of $\A$-type and $\C$-type base tuples.
By choosing a unitary equivalence relation $\nu$ we fix a
subsystem $\Delta \le C_n$ consisting of all roots $\alpha_{ij} \in C_n$ where $i \sim^\nu j$.
Irreducible components of $\Delta$ are in one to one correspondence with equivalence classes of $\nu$,
namely the components of type $C_l$ correspond to self-conjugate equivalence classes and the components
of type $A_l$ correspond to pairs of non-self-conjugate classes.
Then an A- or C- type base $k$-tuple $(i_1, \dots, i_k)$ provides us with a root subsystem in $\Delta$
of type $A_{k-1}$ or $C_k$ respectively, namely
$\left\langle \alpha_{i_1, i_2}, \dots, \alpha_{i_{k-1}, i_k} \right\rangle$ or 
$\left\langle \alpha_{i_1, i_2}, \dots, \alpha_{i_{k-1}, i_k}, \alpha_{i_k} \right\rangle$, respectively.
Moreover, both generating sets above can be chosen as systems (or bases) of simple roots in the subsystems
they generate.

\paragraph{Form nets of ideals and corresponding groups}
Consider a square array $\sigma = (\sigma_{ij})_{i,j \in I}$ of $(2n)^2$
ideals of the ring $R$. We will call it \textit{a net of ideals over} $R$ 
if for any indices $i, j$ and $k$, we have the following inclusions:
\[ 
    \sigma_{ik} \sigma_{kj} \le \sigma_{ij}.
\]
A net of ideals $\sigma$ is called \textit{unitary}, if 
$\sigma_{ij} = \sigma_{-j,-i}$ for each $i$ and $j$. We will call
$\sigma$ a \textit{$\D$-net}, if $\sigma_{ii} = R$ for every $i$ in $I$. 
Equip the net of ideals $\sigma$ with $2n$ additive subgroups 
$\Gamma = (\Gamma_i)_{i \in I}$ of $R$ such that
for any indices $i,j \in I$ the following inclusions hold:
\begin{enumerate}
    \item $2 \sigma_{i,-i} \le \Gamma_i \le \sigma_{i,-i}$
    \item $\sigma_{ij}^{\rectangled{2}} \Gamma_j \le \Gamma_i$,
\end{enumerate}
where $2 \sigma_{i,-i} = \{2\alpha \;|\; \alpha \in \sigma_{i,-i} \}$ and
$\sigma_{ij}^{\rectangled{2}} = \{ \xi^2 | \xi \in \sigma_{ij} \}$.
In this situation $\Gamma$ is called \textit{a column of form parameters for
$\sigma$} and the pair $(\sigma, \Gamma)$ \textit{a form net of ideals [over $R$]}. 
It is the analogue for nets of ideals of the concept of form ideal of Bak \cite{BakThesis} for form rings.
A form net of ideals $(\sigma, \Gamma)$ is said to be \textit{exact} if 
for any index $i$ the equality
\[
    \sigma_{i,-i} = \sum_{k \neq \pm i} \sigma_{ik}\sigma_{k,-i} +
    \left\langle\Gamma_i\right\rangle
\]
holds. From now on we only deal with unitary exact form $\D$-nets of ideals.

Introduce a partial ordering on the set of all form nets of ideals over $R$
by setting $(\sigma', \Gamma') \le (\sigma'', \Gamma'')$ whenever for all 
$i,j \in I$ the inclusions 
$\sigma'_{ij} \le \sigma''_{ij}$ and $\Gamma'_i \le \Gamma''_i$ hold.
As a matter of convenience, given an element $\xi \in R$ and indices 
$s$ and $r$ we will write ``$\xi \in (\sigma, \Gamma)_{sr}$'' instead of
``$\xi \in \sigma_{sr}$ if $r \neq -s$ and $\xi \in \Gamma_s$ otherwise''.
\label{conv:sigma:Gamma:sr}

We can associate two kinds of subgroups of $\Sp(2n,R)$ to each form net 
of ideals $(\sigma, \Gamma)$  over $R$. We call
the subgroup
\[ 
    \Ep(\sigma, \Gamma) = \left\langle T_{ij}(\xi), T_{i,-i}(\alpha) 
    \;|\; i \neq \pm j, \xi \in \sigma_{ij}, \alpha \in \Gamma_{i} \right\rangle
\] 
\textit{the elementary form net subgroup of level $(\sigma, \Gamma)$}.
We call the above generators of $\Ep(\sigma, \Gamma)$ \textit{the short and long $(\sigma, \Gamma)$-elementary symplectic transvections}, respectively. Note that any unitary equivalence relation $\nu$ on the set of indices $I$ defines a form net 
\[
    [\nu]_R = (\sigma_\nu, \Gamma_\nu),
\] where 
\begin{align*}
    (\sigma_\nu)_{ij} &= \begin{cases} R, & \text{ if } i \sim^\nu j \\
    	 0 , & \text{ if } i \nsim^\nu j \end{cases} &
    (\Gamma_\nu)_{i} &= \begin{cases} R , & \text{ if } i \sim^\nu -i \\ 	
    	0 & i \nsim^\nu -i \end{cases}.
\end{align*}
This is clearly a $\D$-net.
Thus the elementary block-diagonal subgroup $\Ep(\nu, R) = \Ep(\sigma_\nu, \Gamma_\nu)$ is a special case of an elementary form net subgroup.
We will call a form net of ideals $(\sigma, \Gamma)$ \textit{major \textup{[}with respect to $\nu$\textup{]}} if
$[\nu]_{R} \le (\sigma, \Gamma)$.
The notion of elementary form net subgroup is a generalization of that of relative elementary
subgroup of even unitary groups introduced in Bak \cite[p. 66]{BakAnn}. The concept of the relative
principal congruence subgroups therein is generalized as follows. We will call the subgroup
\[ 
    \Sp(\sigma, \Gamma) = \{ g \in \Sp(2n,R) \;|\; \forall i,j \in I \; g_{ij} \in \sigma_{ij}, S_{i,-i}(g) \in \Gamma_i \},
\]
\textit{the form net subgroup of level $(\sigma, \Gamma)$},
where 
\[
    S_{i,-i}(g) = \sum_{j > 0} g_{ij} g'_{j,-i}
\] is the so called \textit{length of the row $g_{i*}$}. 
The element $S_{i,-i}(g)$ is clearly in $\sigma_{i,-i}$, by definition of a net of ideals, and 
insisting that $S_{i,-i}(g) \in \Gamma_i \le \sigma_{i,-i}$ is a further restrinction on $g$.
The word ``length'' was introduced by You in \cite{YouNorm2012}.

In the situation when $\Gamma_i = R$ for all $i$ the form net subgroup coincides with the
regular \textit{net subgroup} $\Sp(\sigma) = \{ g \in \Sp(2n,R) \;|\; \forall i,j \in I \; g_{ij} \in \sigma_{ij} \}$.

Next we compute the lengths of rows of certain products of matrices. An immediate reason to do so is
to prove that the above defined set $\Sp(\sigma, \Gamma)$ is indeed a group. Besides, these computations
are used repeatedly in most of the proofs in this paper. Despite of the importance of the proposition
below and it's corollaries, it's proof is quite routine and thus is omitted (cf. \cite{ShchegBielThesis} 
for the proof).

\begin{proposition}
    \label{prop:symplss:length:of:product}
    Let $a$ and $b$ be two matrices in $\Sp(2n, R)$. Then
    \begin{align*}
        S_{i,-i}(ab) &=
        S_{i,-i}(a) +
        \sum_{k} a_{ik} S_{k,-k}(b) a'_{-k,-i} - \\
        &\quad
        -2 \sum_{j,k,l > 0} a_{i,l} b_{l,-j} b'_{-j, k} a'_{k,-i} - \\
        &\quad
        -2 \sum_{j,k>0} \sum_{l > k} (a_{i,-k} b_{-k,-j} b'_{-j,l} a'_{l,-i} +
        a_{ik} b_{k,-j} b'_{-j,-l} a'_{-l,-i}).
    \end{align*}
\end{proposition}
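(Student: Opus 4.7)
The plan is to compute $S_{i,-i}(ab)$ by direct expansion of the definition, combined with careful use of the symplectic identity $g'_{pq} = \varepsilon_p \varepsilon_q g_{-q,-p}$ and the orthogonality relation $\sum_{m \in I} b_{km} b'_{ml} = \delta_{kl}$.

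First I would expand both factors of the defining sum:
\[
    S_{i,-i}(ab) = \sum_{j > 0} (ab)_{ij} (ab)'_{j,-i}
    = \sum_{k,l \in I} a_{ik} a'_{l,-i} \, U_{kl},
    \qquad U_{kl} := \sum_{j > 0} b_{kj} b'_{jl}.
\]
This reduces the task to analyzing the ``partial convolution'' $U_{kl}$ and then weighting it by $a_{ik} a'_{l,-i}$. The diagonal case $l = -k$ is immediate: by definition $U_{k,-k} = S_{k,-k}(b)$, so this block of the sum contributes exactly $\sum_k a_{ik} S_{k,-k}(b) a'_{-k,-i}$, which is the second summand of the claimed formula.

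For pairs $(k,l)$ with $l \neq -k$, I would use the orthogonality relation to write $U_{kl} = \delta_{kl} - \sum_{j < 0} b_{kj} b'_{jl}$, reindex $j \mapsto -j$, and apply the symplectic identity to convert the entries with negative row or column indices back into entries with positive ones. Collecting like terms should produce the two correction sums $-2\Sigma_1$ (from pairs with $k, l$ of a ``compatible'' sign pattern) and $-2\Sigma_2$ (from pairs of mixed sign with $l \neq -k$), the factor $2$ arising because each pair $(k,l)$ is matched under the reindexing with its symplectic partner $(-l,-k)$ and the two contributions are equal. The term $S_{i,-i}(a)$ emerges as a residue when collecting the $\delta_{kl}$-contributions together with the trivial identity $\sum_{k \in I} a_{ik} a'_{k,-i} = (aa^{-1})_{i,-i} = 0$ (valid since $i \neq -i$), which forces the ``diagonal'' block to be traded for a positive-indices-only partial sum.

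Practically, I would partition $I \times I$ into the blocks $\{l = -k\}$, $\{l = k\}$, $\{k,l > 0,\ k \neq l\}$, $\{k,l < 0,\ k \neq l\}$, $\{k > 0 > l,\ l \neq -k\}$ and $\{k < 0 < l,\ l \neq -k\}$; treat each block in turn; and finally combine the six partial sums to match the statement.

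The main obstacle is purely bookkeeping: the integer coefficients $+1, +1, -2, -2$ in the formula are forced by careful cancellation of all the $\varepsilon_p$-factors that arise when the symplectic identity is applied twice (once for the $a$-entries and once for the $b$-entries), together with the two-to-one degeneracy of the pairing $(k,l) \leftrightarrow (-l,-k)$. Once this bookkeeping is set up systematically, no new idea beyond the two identities above is required, which is exactly why the author is comfortable relegating the full computation to \cite{ShchegBielThesis}.
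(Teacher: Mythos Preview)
Your approach is correct and is exactly the routine direct computation the paper has in mind: the paper omits the proof entirely, calling it ``quite routine'' and referring to \cite{ShchegBielThesis}, and your expansion $S_{i,-i}(ab)=\sum_{k,l}a_{ik}a'_{l,-i}\sum_{j>0}b_{kj}b'_{jl}$ followed by the symplectic involution $(k,l)\leftrightarrow(-l,-k)$ and the orthogonality relation is precisely how such a computation is carried out. In particular your identification of the origin of each of the four terms (the $l=-k$ block giving $\sum_k a_{ik}S_{k,-k}(b)a'_{-k,-i}$, the $\delta_{kl}$ residue over positive representatives giving $S_{i,-i}(a)$, and the remaining orbit sums producing the two $-2\Sigma$ terms) is accurate.
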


\begin{corollary}
    \label{cor:symplss:sp:sigma:gamma:is:a:group}
    Let $(\sigma, \Gamma)$ be a form net of ideals over $R$. Suppose $a, b \in \Sp(\sigma)$,
    then
    \begin{equation}
        \label{eq:cor:symplss:sp:sigma:gamma:is:a:group:statement}
        S_{i,-i}(ab) \equiv S_{i,-i}(a) +
        \sum_{k} a^2_{ik} S_{k,-k}(b) \mod \Gamma_i.        
    \end{equation}
    In particular, the  form net subgroup $\Sp(\sigma, \Gamma)$ is indeed a group.
    \begin{proof}
        Clearly $e \in \Sp(\sigma, \Gamma)$. Next the congruences 
        \eqref{eq:cor:symplss:sp:sigma:gamma:is:a:group:statement} together with
        the condition $\sigma_{ik}^{\rectangled{2}} \Gamma_k \le \Gamma_i$ show that
        $\Sp(\sigma, \Gamma)$ is closed under taking products.
        Let $a \in \Sp(\sigma, \Gamma)$. By
        \eqref{eq:cor:symplss:sp:sigma:gamma:is:a:group:statement}
        we get that
        \[
            0 = S_{i,-i}(e) = S_{i,-i}(a^{-1} a) \equiv S_{i,-i}(a^{-1}) 
            + \sum_{k} (a'_{ik})^2 S_{k,-k}(a) \equiv S_{i,-i}(a^{-1}) \mod \Gamma_i.
        \]
        Therefore $a^{-1} \in \Sp(\sigma, \Gamma)$ and $\Sp(\sigma,\Gamma)$ is a group.
    \end{proof}
\end{corollary}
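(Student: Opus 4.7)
The plan is to derive the congruence \eqref{eq:cor:symplss:sp:sigma:gamma:is:a:group:statement} directly from the explicit product formula in Proposition~\ref{prop:symplss:length:of:product} by showing that every summand on the right-hand side of that formula except $S_{i,-i}(a)$ and the diagonal contributions $a_{ik}^2 S_{k,-k}(b)$ lies in $\Gamma_i$. Once the congruence is established, the three group axioms for $\Sp(\sigma,\Gamma)$ follow in the standard way: membership of $e$ is obvious, closure under multiplication is read off the congruence, and closure under inversion is obtained by applying the congruence to the identity $a^{-1}a = e$.

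The technical heart is to rewrite the second summand $\sum_k a_{ik} S_{k,-k}(b) a'_{-k,-i}$ in Proposition~\ref{prop:symplss:length:of:product}. Using the symplectic identity $a'_{ij} = \varepsilon_i\varepsilon_j a_{-j,-i}$ one gets $a'_{-k,-i} = \varepsilon_k\varepsilon_i a_{ik}$, hence
\[
   a_{ik} S_{k,-k}(b) a'_{-k,-i} \;=\; \varepsilon_k\varepsilon_i\, a_{ik}^2 S_{k,-k}(b).
\]
When $\varepsilon_k = \varepsilon_i$ the sign is $+1$ and the term is already the desired $a_{ik}^2 S_{k,-k}(b)$; when $\varepsilon_k \neq \varepsilon_i$ it differs from $a_{ik}^2 S_{k,-k}(b)$ by a factor of $-2$. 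The correction is therefore a multiple of $2\, a_{ik}^2 S_{k,-k}(b)$, and since $a\in\Sp(\sigma)$ and $b\in\Sp(\sigma)$, one has $a_{ik}\in\sigma_{ik}$ and $S_{k,-k}(b)\in\sigma_{k,-k}$, so unitarity and the net property give
\[
   a_{ik}^2\, S_{k,-k}(b) \;\in\; \sigma_{ik}\,\sigma_{i,-k}
   \;=\; \sigma_{-k,-i}\,\sigma_{i,-k}
   \;\le\; \sigma_{i,-i},
\]
which lands in $\Gamma_i$ after multiplication by $2$, by axiom $2\sigma_{i,-i}\le\Gamma_i$. The same net-of-ideals bookkeeping, together with the factor $-2$ explicit in Proposition~\ref{prop:symplss:length:of:product}, shows that the three remaining triple and quadruple sums all lie in $2\sigma_{i,-i}\le\Gamma_i$; this is the only place where genuine care is needed, but it is purely routine unit/index matching and no new ingredient is required.

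For the group statement, the congruence immediately gives closure: if $a,b\in\Sp(\sigma,\Gamma)$ then $S_{k,-k}(b)\in\Gamma_k$, so by the second form-parameter axiom $a_{ik}^2 S_{k,-k}(b) \in \sigma_{ik}^{\rectangled{2}}\Gamma_k \le \Gamma_i$, whence $S_{i,-i}(ab)\equiv S_{i,-i}(a)\equiv 0 \pmod{\Gamma_i}$. Closure under inversion is the same trick applied to $e = a^{-1}a$: the congruence yields
\[
   0 \;=\; S_{i,-i}(e) \;\equiv\; S_{i,-i}(a^{-1}) + \sum_k (a'_{ik})^2 S_{k,-k}(a) \pmod{\Gamma_i},
\]
and since $a'_{ik}\in\sigma_{ik}$ and $S_{k,-k}(a)\in\Gamma_k$, the summation vanishes mod $\Gamma_i$, giving $S_{i,-i}(a^{-1})\in\Gamma_i$. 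The main obstacle is the sign/index bookkeeping in the first step; once that is handled the group axioms are essentially automatic.
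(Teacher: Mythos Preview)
Your proposal is correct and follows essentially the same route as the paper. The paper's own proof of the corollary takes the congruence~\eqref{eq:cor:symplss:sp:sigma:gamma:is:a:group:statement} as an immediate consequence of Proposition~\ref{prop:symplss:length:of:product} (whose proof is omitted there) and then argues the group axioms exactly as you do: closure via $\sigma_{ik}^{\rectangled{2}}\Gamma_k\le\Gamma_i$, and inverses via $0=S_{i,-i}(a^{-1}a)$. Your write-up in fact supplies the missing derivation of the congruence from the product formula, so it is strictly more detailed than the paper at this point; the only cosmetic wrinkle is that in your chain $\sigma_{ik}\sigma_{i,-k}=\sigma_{-k,-i}\sigma_{i,-k}\le\sigma_{i,-i}$ you are silently using commutativity of the ideal product to reorder as $\sigma_{i,-k}\sigma_{-k,-i}$ before invoking the net axiom.
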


The following two corollaries allow us to compute the lengths of rows in products of matrices in $\Sp(\sigma)$
and short $(\sigma, \Gamma)$-elementary symplectic transvection as well as lengths of rows of some root elements.

\begin{corollary}
    \label{cor:symplss:length:of:left:mult:by:transv}
    Let $a \in \Sp(\sigma)$ and $T_{pq}(\xi)$ be a short elementary symplectic transvection in $\Ep(\sigma, \Gamma)$.
    Then \[
        S_{i,-i}(T_{pq}(\xi) a) \equiv
        \begin{cases}
            S_{i,-i}(a) & \text{ if } i \neq p, -q \\
            S_{p,-p}(a) + \xi^2 S_{q,-q}(a) & \text{ if } i = p \\
            S_{-q,q}(a) + \xi^2 S_{-p,p}(a) & \text{ if } i = -q
        \end{cases}
        \mod \Gamma_{i}
    \]
    and for all $i \in I$ 
    \[ S_{i,-i}(a T_{pq}(\xi)) \equiv S_{i,-i}(a) \mod \Gamma_i.\]
\end{corollary}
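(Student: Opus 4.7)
The strategy is to apply Corollary \ref{cor:symplss:sp:sigma:gamma:is:a:group} directly. Since $T_{pq}(\xi) \in \Ep(\sigma, \Gamma) \le \Sp(\sigma, \Gamma) \le \Sp(\sigma)$, both $a$ and $T_{pq}(\xi)$ lie in $\Sp(\sigma)$, and the congruence
\[
    S_{i,-i}(xy) \equiv S_{i,-i}(x) + \sum_k x_{ik}^{2}\, S_{k,-k}(y) \mod \Gamma_i
\]
is available with either factor in either role. The proof therefore reduces to computing the two inputs this formula needs: the lengths $S_{k,-k}(T_{pq}(\xi))$, and the squared entries of the $p$-th and $(-q)$-th rows of $T_{pq}(\xi)$.

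The key observation is that $S_{k,-k}(T_{pq}(\xi)) = 0$ for every $k \in I$. Using $T_{pq}(\xi)^{-1} = T_{pq}(-\xi)$ and the explicit form $T_{pq}(\xi) = e + \xi e_{pq} - \varepsilon_p \varepsilon_q \xi e_{-q,-p}$, I would expand $S_{k,-k}(T_{pq}(\xi)) = \sum_{j > 0} T_{pq}(\xi)_{kj}\, T_{pq}(\xi)'_{j,-k}$ and check the resulting nine cross-terms: each one is a product of Kronecker deltas that forces either $j = k$ and $j = -k$ simultaneously (impossible for $j > 0$), or one of the forbidden identifications $p = \pm q$ or $q = -q$. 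The first is ruled out by the short-root hypothesis $p \neq \pm q$, and the latter by $0 \notin I$.

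With this vanishing in hand, substituting $x = a$, $y = T_{pq}(\xi)$ kills the sum on the right of the congruence and yields $S_{i,-i}(a T_{pq}(\xi)) \equiv S_{i,-i}(a) \mod \Gamma_i$. For the left multiplication I would swap the roles and use $S_{i,-i}(T_{pq}(\xi)) = 0$ to collapse the formula to $S_{i,-i}(T_{pq}(\xi) a) \equiv \sum_k T_{pq}(\xi)_{ik}^{2}\, S_{k,-k}(a) \mod \Gamma_i$. For $i \notin \{p, -q\}$ the $i$-th row of $T_{pq}(\xi)$ is the standard unit row vector and the sum collapses to $S_{i,-i}(a)$. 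For $i = p$ the row reads $\delta_{pk} + \xi \delta_{kq}$, whose square is $\delta_{pk} + \xi^{2} \delta_{kq}$ (the cross-term $2\xi \delta_{pk}\delta_{kq}$ vanishes since $p \neq q$), so the sum becomes $S_{p,-p}(a) + \xi^{2} S_{q,-q}(a)$. The case $i = -q$ is symmetric, using $T_{pq}(\xi)_{-q,k} = \delta_{-q,k} - \varepsilon_p \varepsilon_q \xi \delta_{k,-p}$.

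The only delicate point is the vanishing $S_{k,-k}(T_{pq}(\xi)) = 0$, where the short-root condition $p \neq \pm q$ is what rules out accidental coincidences among the indices. Everything else is bookkeeping, and no further structural input is required.
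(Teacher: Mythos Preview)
Your proposal is correct and follows essentially the same route the paper intends: the corollary is placed immediately after Corollary~\ref{cor:symplss:sp:sigma:gamma:is:a:group}, and deriving it by plugging $T_{pq}(\xi)$ into the congruence \eqref{eq:cor:symplss:sp:sigma:gamma:is:a:group:statement} together with the vanishing $S_{k,-k}(T_{pq}(\xi))=0$ is exactly the expected argument. Your case analysis for the nine cross-terms and for the squared row entries is accurate, and the short-root hypothesis $p\neq\pm q$ is invoked at precisely the right places.
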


\begin{corollary}
    \label{cor:symplss:length:of:root:element}
    Let $a \in \Sp(\sigma)$, $T_{sr}(\xi), T_{st}(\zeta)$ be short elementary symplectic transvections
    in $\Ep(\sigma)$ and $s \neq \pm r, \pm t$ and $r \neq \pm t$. 
    Then
    \begin{align*}
        S_{i,-i}(a T_{sr}(\xi) T_{st}(\zeta) a^{-1}) &\equiv
                a_{is}^2 \zeta^2 S_{t,-t}(a^{-1}) +
                a_{is}^2 \xi^2 S_{r,-r}(a^{-1}) + \\
                &+
                a_{i,-t}^2 \zeta^2 S_{-s,s}(a^{-1}) +
                a_{i,-r}^2 \xi^2 S_{-s,s}(a^{-1}) \mod \Gamma_i.
    \end{align*}
    In particular if $\zeta = 0$ then $T_{st}(\zeta) = e$ and
    \[
        S_{i,-i}(a T_{sr}(\xi) a^{-1}) \equiv a_{is}^2 \xi^2 S_{r,-r}(a^{-1}) 
                + a_{i,-r}^2 \xi^2 S_{-s,s}(a^{-1}) \mod \Gamma_i.    
    \]
\end{corollary}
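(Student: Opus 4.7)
The plan is to compute $S_{i,-i}\!\bigl(aT_{sr}(\xi)T_{st}(\zeta)a^{-1}\bigr)$ by peeling off the two transvections one at a time using the two preceding corollaries. Setting $b := T_{sr}(\xi)T_{st}(\zeta)a^{-1}$, which lies in $\Sp(\sigma,\Gamma) \le \Sp(\sigma)$, Corollary~\ref{cor:symplss:sp:sigma:gamma:is:a:group} applied to the product $a\cdot b$ gives
\[
S_{i,-i}(ab) \equiv S_{i,-i}(a) + \sum_{k} a_{ik}^{2}\,S_{k,-k}(b) \pmod{\Gamma_i}.
\]
Applying the same corollary to $a \cdot a^{-1} = e$ yields $S_{i,-i}(a) + \sum_{k} a_{ik}^{2}\,S_{k,-k}(a^{-1}) \equiv 0 \pmod{\Gamma_i}$, so it is enough to compute the differences $S_{k,-k}(b) - S_{k,-k}(a^{-1})$ modulo $\Gamma_k$ for each $k$; the form-net axiom $\sigma_{ik}^{\rectangled{2}}\Gamma_k \le \Gamma_i$ then guarantees that once these differences are multiplied by $a_{ik}^{2}$, they propagate correctly to the modulo~$\Gamma_i$ level.

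Next I would compute these differences by two successive applications of Corollary~\ref{cor:symplss:length:of:left:mult:by:transv}: first peel $T_{st}(\zeta)$ off of $a^{-1}$ (which perturbs $S_{k,-k}$ only for $k=s$ and $k=-t$), then peel $T_{sr}(\xi)$ off of the resulting product (which further perturbs only $k=s$ and $k=-r$). The hypotheses $s\neq \pm r,\pm t$ and $r\neq \pm t$ render the three affected indices $s$, $-r$, $-t$ pairwise distinct and keep each perturbation from landing on a ``stable'' row of the other step, so the corrections add up cleanly. Collecting terms, after the bulk contribution $S_{i,-i}(a)+\sum_k a_{ik}^2 S_{k,-k}(a^{-1})$ cancels, exactly the four summands of the stated formula remain. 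The special case $\zeta=0$ is immediate, since $T_{st}(0)=e$ kills the two $\zeta^{2}$-terms.

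The argument is essentially bookkeeping on top of the two previous corollaries. The only care required is tracking which indices each transvection perturbs and using the hypotheses $s \neq \pm r, \pm t$ and $r \neq \pm t$ to verify at each step that the two perturbations are independent: in particular, that $-t$ does not coincide with $s$ or $-r$, and that neither $r$ nor $-s$ equals $s$ or $-t$. No deeper obstacle is expected.
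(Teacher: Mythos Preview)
Your approach is correct and is exactly the routine computation the paper has in mind when it declares the proof of this corollary (together with that of Proposition~\ref{prop:symplss:length:of:product}) omitted: apply Corollary~\ref{cor:symplss:sp:sigma:gamma:is:a:group} to $a\cdot b$ and to $a\cdot a^{-1}$, then peel off the two transvections via Corollary~\ref{cor:symplss:length:of:left:mult:by:transv}, using the index hypotheses to keep the perturbations at $k=s$, $k=-r$, $k=-t$ independent. One small slip: you write that $b=T_{sr}(\xi)T_{st}(\zeta)a^{-1}$ lies in $\Sp(\sigma,\Gamma)$, but the hypothesis is only $a\in\Sp(\sigma)$, so you only know $b\in\Sp(\sigma)$; this is harmless, since Corollary~\ref{cor:symplss:sp:sigma:gamma:is:a:group} requires nothing more.
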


We finish this section with two technical results which will be used repeatedly and without
reference in proofs throughout the paper. The first one shows that
major form nets of ideals are partitioned into blocks in which all ideals are equal and all form parameters are equal.
The second one allows simplifying reasoning dealing with case-by-case analysis 
of small equivalence classes. The proofs of both results can be checked straightforwardly and are left to the reader.

\begin{proposition}
    \label{prop:symplss:change:indices}
    Let $(\sigma, \Gamma)$ be a form net of ideal such that 
    $\Ep(\nu, R) \le \Ep(\sigma, \Gamma)$, with $h(\nu) \geq (4,3)$. Then for 
    any indices $i, j, k, l$ such that $k \sim i$, $l \sim j$, we have:
    \begin{enumerate}
        \item $ \sigma_{kj} = \sigma_{ij} = \sigma_{il}$
        \item $ \Gamma_i = \Gamma_k$.
    \end{enumerate}
\end{proposition}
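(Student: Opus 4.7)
My plan is to first promote the group-level hypothesis $\Ep(\nu, R) \le \Ep(\sigma, \Gamma)$ to the net-level statement $[\nu]_R \le (\sigma, \Gamma)$---i.e., that $(\sigma, \Gamma)$ is major with respect to $\nu$---and then to read off both equalities directly from the defining axioms of a form net of ideals.

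For the promotion step, I would invoke the standard level-detection principle for elementary form net subgroups: under $h(\nu) \geq (4,3)$ every equivalence class of $\nu$ contains enough distinct indices that a short transvection $T_{ij}(\xi)$ can belong to $\Ep(\sigma, \Gamma)$ only when $\xi \in \sigma_{ij}$, and similarly $T_{i,-i}(\alpha) \in \Ep(\sigma, \Gamma)$ forces $\alpha \in \Gamma_i$. Applied to the generators $T_{ij}(\xi)$ (for all $\xi \in R$, $i \sim^\nu j$, $i \neq \pm j$) and $T_{i,-i}(\alpha)$ (for all $\alpha \in R$, $i \sim^\nu -i$) of $\Ep(\nu, R)$, this yields $\sigma_{ij} = R$ whenever $i \sim^\nu j$ and $\Gamma_i = R$ whenever $i \sim^\nu -i$.

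Once majority is in hand, part~(1) is an immediate consequence of the chain condition $\sigma_{ab}\sigma_{bc} \le \sigma_{ac}$. Fixing $i \sim^\nu k$, we obtain $\sigma_{ik} = \sigma_{ki} = R$ in each of the cases $k = i$ (trivial), $k = -i$ (self-conjugate class), and $k \neq \pm i$ (same class, distinct indices), and so
\[
    \sigma_{ij} = \sigma_{ki}\sigma_{ij} \le \sigma_{kj}
    \quad\text{and}\quad
    \sigma_{kj} = \sigma_{ik}\sigma_{kj} \le \sigma_{ij},
\]
giving $\sigma_{kj} = \sigma_{ij}$; the equality $\sigma_{ij} = \sigma_{il}$ is obtained symmetrically on the right, using $\sigma_{jl} = \sigma_{lj} = R$. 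For part~(2), the form-parameter axiom $\sigma_{ik}^{\rectangled{2}} \Gamma_k \le \Gamma_i$ combined with $1 = 1^2 \in \sigma_{ik}^{\rectangled{2}}$ yields $\Gamma_k \le \Gamma_i$, and the analogous argument with the roles of $i$ and $k$ reversed gives the reverse inclusion.

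The only non-trivial step is the initial promotion from the group containment to the form-net containment; after that the argument really is a one-line application of the net axioms, which matches the author's assessment that the proof is straightforward and may be left to the reader.
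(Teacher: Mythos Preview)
Your argument is correct and is precisely the kind of straightforward verification the paper has in mind when it leaves the proof to the reader. Once majority $[\nu]_R \le (\sigma,\Gamma)$ is established, both assertions follow immediately from the net axiom $\sigma_{ab}\sigma_{bc}\le\sigma_{ac}$ and the form-parameter axiom $\sigma_{ik}^{\rectangled{2}}\Gamma_k\le\Gamma_i$, exactly as you write.

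One small remark on the promotion step. You attribute the implication ``$T_{ij}(\xi)\in\Ep(\sigma,\Gamma)\Rightarrow\xi\in\sigma_{ij}$'' to the size condition $h(\nu)\ge(4,3)$, but in fact no condition on $\nu$ is needed here: it is enough to observe that $\Ep(\sigma,\Gamma)\le\Sp(\sigma,\Gamma)$ (each generator lies in $\Sp(\sigma,\Gamma)$, and the latter is a group by Corollary~\ref{cor:symplss:sp:sigma:gamma:is:a:group}). Then for $i\ne\pm j$ one simply reads off $\xi=(T_{ij}(\xi))_{ij}\in\sigma_{ij}$, and for a long transvection one reads off $\pm\alpha=S_{i,-i}(T_{i,-i}(\alpha))\in\Gamma_i$. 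This is the cleanest way to justify the step you call ``level detection''; the rest of your proof stands as written.
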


\begin{proposition}
    \label{prop:symplss:three:indices}
    Let $\nu$ be a unitary equivalence relation on the index set 
    $I$ such that $h(\nu) \geq (4,3)$. 
    Let $i, j$ be two indices in $I$ such that $i \neq j$.     
    Then
    one of the following holds:
    \begin{enumerate}
        \item $\nu(i) = \{i, -i, j, -j\}$
        \item There exists an index $k$ in $I$ such that 
        $k \neq \pm i, \pm j$ and $k \sim^\nu i$.
    \end{enumerate}
%
\end{proposition}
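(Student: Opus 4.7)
The plan is a case analysis on whether $\nu(i)$ is self-conjugate and whether $j \sim^\nu i$, using the lower bounds supplied by $h(\nu) \geq (4,3)$. The key observation is that the second alternative of the conclusion is exactly the statement $\nu(i) \setminus \{\pm i, \pm j\} \neq \emptyset$, so the whole proof reduces to bounding $|\nu(i) \cap \{\pm i, \pm j\}|$ from above and comparing it with the class-size lower bound.

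First I would handle the case when $\nu(i)$ is non-self-conjugate, where $|\nu(i)| \geq 3$ and $-i \notin \nu(i)$. By unitarity of $\nu$, the relation $-j \in \nu(i)$ holds iff $j \sim^\nu -i$, and in that event $j \notin \nu(i)$ (otherwise $-i$ would also belong to $\nu(i)$). A short sub-case analysis, including the degenerate possibility $i = -j$, then shows $|\nu(i) \cap \{\pm i, \pm j\}| \leq 2$, whence $|\nu(i) \setminus \{\pm i, \pm j\}| \geq 1$ and the second alternative holds.

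Next I would treat the self-conjugate case, where $\{i, -i\} \subseteq \nu(i)$ and $|\nu(i)| \geq 4$. If $j \not\sim^\nu i$, then by unitarity neither $j$ nor $-j$ lies in $\nu(i)$, so at least two elements of $\nu(i)$ lie outside $\{\pm i, \pm j\}$, satisfying the second alternative. If instead $j \sim^\nu i$, unitarity forces $-j \sim^\nu -i \sim^\nu i$, so $\{i, -i, j, -j\} \subseteq \nu(i)$; either this containment is an equality, giving the first alternative, or $\nu(i)$ contains a further element $k$ which automatically avoids $\{\pm i, \pm j\}$, giving the second.

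The main and really only point that demands attention is the bookkeeping for degenerate configurations of $\{\pm i, \pm j\}$ (for instance $i = -j$, which collapses the four labels to two); in each such degeneration the relevant lower bound on $|\nu(i)|$ comfortably exceeds the number of elements of $\nu(i) \cap \{\pm i, \pm j\}$, so the argument goes through unchanged. Since no step involves anything beyond counting and the unitarity of $\nu$, this is indeed the sort of routine check the authors leave to the reader.
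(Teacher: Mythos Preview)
Your proposal is correct. The paper itself does not give a proof of this proposition; it explicitly states that the result ``can be checked straightforwardly'' and is ``left to the reader.'' Your case analysis on whether $\nu(i)$ is self-conjugate, combined with the observation that alternative~(2) is equivalent to $\nu(i)\setminus\{\pm i,\pm j\}\neq\emptyset$, is precisely the kind of routine counting-and-unitarity verification the authors had in mind, and you have handled the degenerate case $j=-i$ cleanly.
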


\section{Form net associated with a subgroup and the description of the transporter}
\label{sec:symplss:assoc:net:and:norm}

\paragraph{Form net of ideals associated with a subgroup.}
Let $\nu$ be a unitary equivalence relation on the index set $I = \{1, \dots, n, -n, \dots, -1\}$ 
such that $h(\nu) \geq (4,3)$. Let 
$H$ be a subgroup of $\Sp(2n,R)$ such that $\Ep(\nu, R) \le H$. 
An exact form net of ideals $(\sigma, \Gamma)$ is called \textit{the form net [of ideals] associated with $H$} if
$\Ep(\sigma, \Gamma) \le H$ and if for any exact form net of ideals $(\sigma', \Gamma')$ 
such that $\Ep(\sigma', \Gamma') \le H$, it follows that $(\sigma', \Gamma') \le (\sigma, \Gamma)$.
Clearly, if $(\sigma, \Gamma)$ exists then it is unique. The next lemma shows that
$(\sigma, \Gamma)$ exists. 

\begin{lemma}
    \label{lemma:symplss:ass:net:is:a:net}
    Let $\nu$ be a unitary equivalence relation on $I$ such that 
    $h(\nu) \geq (4,3)$ and $H$ a subgroup of $\Sp(2n, R)$ that contains the 
    subgroup $\Ep(\nu, R)$. For each $i \neq \pm j \in I$ set
    \begin{equation*}
        \begin{aligned}
            \sigma_{ij} = \{\xi \in R \;|\; T_{ij}(\xi) \in &H \}, 
            \quad        
            \Gamma_i = \{\alpha \in R \;|\; T_{i,-i}(\alpha) \in H \},  
            \quad
            \sigma_{ii} = R, \\
            &\sigma_{i,-i} = \sum_{j \neq \pm i}\limits \sigma_{ij}\sigma_{j,-i} + \left< \Gamma_i \right>. 
        \end{aligned}
    \end{equation*}
    Then $(\sigma, \Gamma)$ is the form net of ideals associated with $H$.
\end{lemma}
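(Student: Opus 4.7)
The plan is to verify all axioms of an exact form net of ideals for the $(\sigma,\Gamma)$ defined in the statement, check that $\Ep(\sigma,\Gamma) \le H$, and establish maximality. Of these, the first is the most substantial; exactness is built into the definition of $\sigma_{i,-i}$, the inclusion $\Ep(\sigma,\Gamma) \le H$ is immediate since $\sigma_{ij}$ (for $i \neq \pm j$) and $\Gamma_i$ are defined by membership in $H$ of the corresponding transvections, and maximality reduces, given any exact $(\sigma',\Gamma')$ with $\Ep(\sigma',\Gamma') \le H$, to the off-diagonal containments $\sigma'_{ij} \subseteq \sigma_{ij}$ and $\Gamma'_i \subseteq \Gamma_i$, upgraded by exactness of $(\sigma',\Gamma')$ to $\sigma'_{i,-i} \subseteq \sigma_{i,-i}$. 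The substance therefore lies in the verification of the form net axioms, for which the main tools are the Steinberg relations (R1)--(R6) together with the copious supply of transvections and Weyl elements from $\Ep(\nu,R) \le H$.

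First I would dispatch the easy axioms. Additivity of $\sigma_{ij}$ for $i \neq \pm j$ and of $\Gamma_i$ is immediate from (R2); unitarity $\sigma_{ij} = \sigma_{-j,-i}$ from (R1); $\sigma_{ii} = R$ and exactness hold by construction, and each $\sigma_{i,-i}$ is an ideal as a sum of products of ideals. For the ideal property of $\sigma_{ij}$ with $i \neq \pm j$, I would apply Proposition \ref{prop:symplss:three:indices}: either $\nu(i) = \{i,-i,j,-j\}$, in which case $i \sim^\nu j$ forces $\sigma_{ij} = R$ trivially, or there exists $k \sim^\nu i$ with $k \neq \pm i, \pm j$, and then two applications of (R4) yield $T_{ij}(r\xi) = [T_{ik}(r),[T_{ki}(1),T_{ij}(\xi)]] \in H$ using $T_{ik}(r), T_{ki}(1) \in \Ep(\nu,R)$.

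The hard part will be the form parameter condition $\sigma_{ij}^\rectangled{2}\Gamma_j \le \Gamma_i$. Two auxiliary facts come first: the bound $2\sigma_{ij}\sigma_{j,-i} \le \Gamma_i$ from (R5), and the constancy $\Gamma_k = \Gamma_j$ for $k \sim^\nu j$, $k \neq \pm j$, obtained by conjugating $T_{j,-j}(\gamma)$ by the Weyl element $w_{jk} = T_{jk}(1)T_{kj}(-1)T_{jk}(1) \in \Ep(\nu,R)$. The main relation to exploit is the structure-constant expansion
\[
    [T_{ij}(\xi), T_{j,-j}(\gamma)] = T_{i,-j}(c_1\xi\gamma)\,T_{i,-i}(c_2\xi^2\gamma) \in H, \qquad c_1,c_2 \in \{\pm 1\};
\]
extracting the long factor reduces to showing $\xi\gamma \in \sigma_{i,-j}$. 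For this I would pick $k \sim^\nu j$ with $k \neq \pm i, \pm j$ (available under $h(\nu) \geq (4,3)$ unless $\nu(j) = \{j,-j,i,-i\}$ is self-conjugate of size $4$, in which case $\Gamma_i = R$ makes the claim vacuous) and run a three-step chain: (i) $\gamma \in \sigma_{j,-k}$ from (R6) applied to $[T_{j,-j}(\gamma), T_{-j,-k}(1)] = T_{j,-k}(\gamma)T_{k,-k}(-\varepsilon_j\varepsilon_k\gamma)$, whose long factor lies in $H$ since $\gamma \in \Gamma_j = \Gamma_k$; (ii) $\xi\gamma \in \sigma_{i,-k}$ from (R4) applied to $[T_{ij}(\xi), T_{j,-k}(\gamma)]$; (iii) $\xi\gamma \in \sigma_{i,-j}$ from (R4) applied to $[T_{i,-k}(\xi\gamma), T_{-k,-j}(1)]$, using $T_{-k,-j}(1) \in \Ep(\nu,R)$. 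Extracting the long factor from the main commutator then gives $\xi^2\gamma \in \Gamma_i$.

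With the form parameter condition in hand, the remaining axioms fall into place. Taking $j = i$ gives $R^\rectangled{2}\Gamma_i \le \Gamma_i$; combined with additivity this yields $2r\gamma \in \Gamma_i$ for all $r \in R$, $\gamma \in \Gamma_i$, hence $2\langle\Gamma_i\rangle \le \Gamma_i$ and (together with the (R5) bound) the inclusion $2\sigma_{i,-i} \le \Gamma_i$; the reverse inclusion $\Gamma_i \le \sigma_{i,-i}$ is by construction. Finally, the net product $\sigma_{ij}\sigma_{jk} \le \sigma_{ik}$ is verified case by case: the generic case of distinct indices modulo sign by (R4); cases with $j = \pm i$ or $j = \pm k$ by the ideal property; and the remaining case $j = -i$ by expanding $\sigma_{i,-i}$ via exactness, reducing the product summands $\sigma_{il}\sigma_{l,-i}\sigma_{-i,k}$ to previously-handled net products, and controlling the $\langle\Gamma_i\rangle$-summand through (R6): the commutator $[T_{i,-i}(\gamma), T_{-i,k}(\eta)] = T_{ik}(\gamma\eta)T_{-k,k}(\varepsilon_i\varepsilon_k\gamma\eta^2)$ lies in $H$, and its long factor lies in $H$ by the form parameter condition at $(-k,i)$ (invoking unitarity $\sigma_{-i,k} = \sigma_{-k,i}$), so its short factor $T_{ik}(\gamma\eta) \in H$, completing the verification.
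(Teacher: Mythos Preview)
Your proof is correct and follows essentially the same route as the paper's (deferred) argument via Proposition~\ref{prop:spgen:sigma:g:is:almost:a:net}: both extract the net axioms from the Steinberg relations (R4), (R5), (R6), and both reduce the form-parameter inclusion $\sigma_{ij}^{\rectangled{2}}\Gamma_j\le\Gamma_i$ to isolating the long factor in $[T_{ij}(\xi),T_{j,-j}(\gamma)]$. The one organisational difference is that the paper first proves $\Gamma_i\sigma_{-i,k}\le\sigma_{ik}$ (its item~(2)) by stripping the long factor from $[T_{i,-i}(\alpha),T_{-i,k}(\xi)]$ via a double commutator with $T_{li},T_{il}$ for $l\sim i$, and then feeds that into the form-parameter step; you instead establish $\Gamma_j=\Gamma_k$ by Weyl conjugation and use it to kill the long factor directly. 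Both work.

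Two small points of phrasing to tighten. First, ``taking $j=i$'' in the form-parameter condition is not literally licit, since your argument for $\sigma_{ij}^{\rectangled{2}}\Gamma_j\le\Gamma_i$ uses (R6) and requires $i\ne\pm j$; what you want is to pick $k\sim i$, $k\ne\pm i$, and invoke $\sigma_{ik}=R$ together with your Weyl identity $\Gamma_k=\Gamma_i$ to get $R^{\rectangled{2}}\Gamma_i=\sigma_{ik}^{\rectangled{2}}\Gamma_k\le\Gamma_i$. Second, the axiom $\sigma_{i,-i}^{\rectangled{2}}\Gamma_{-i}\le\Gamma_i$ (the case $j=-i$) is not covered by your discussion; it follows the same way once you note $\sigma_{i,-i}=\sigma_{k,-i}$ for $k\sim i$, $k\ne\pm i$ (one containment is your step~(i), the other is $\sigma_{k,-i}=\sigma_{ik}\sigma_{k,-i}\le\sigma_{i,-i}$). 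Also, your case split for the net product reads ``$j=\pm i$ or $j=\pm k$ by the ideal property; and the remaining case $j=-i$'', which is self-contradictory; presumably you mean $j=i$ or $j=k$ by the ideal property, with $j=-i$ (and, by unitarity, $j=-k$) handled by the detailed argument.
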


We won't prove this lemma in the present form. In section
\ref{sec:spgen:stsetting} we will prove a slightly stronger version of this result,
Proposition \ref{prop:spgen:sigma:g:is:almost:a:net}. For the sake of simplicty, the proof
Proposition \ref{prop:spgen:sigma:g:is:almost:a:net} relies on Lemma 
\ref{lemma:symplss:ass:net:is:a:net}. This does not create a loop in theory, as we can almost
literaly repeat the proof of Proposition \ref{prop:spgen:sigma:g:is:almost:a:net} for
Lemma \ref{lemma:symplss:ass:net:is:a:net}. This is done explicitly in 
\cite[Chapter 1, Lemma 1.2.1]{ShchegBielThesis}.

\paragraph{Description of the transporter.}
The rest of this section is devoted to the proof of Theorem \ref{theorem:symplss:transpdescr}. 
The following proposition allows to compute lengths of rows of conjugates of an arbitrary matrix
by a matrix satisfying property (T1) of Theorem \ref{theorem:symplss:transpdescr}.

\begin{proposition}
    \label{prop:spgen:conj:N:congr}
    Let $(\sigma, \Gamma)$ be an exact form net of ideals. Suppose $a \in \Sp(2n,R)$ satisfies the condition 
    \[ 
        a_{ij} \sigma_{jk} a'_{kl} \le \sigma_{il}
    \]
    for all $i,j,k,l \in I$. Then for any matrix $g \in \Sp(\sigma,\Gamma)$
    and any $i \in I$ the following congruence holds:
    \[
        S_{i,-i}(a g a^{-1}) \equiv
        	\sum_{k \in I} a_{ik}^2 
            \left( S_{k,-k}(g) + 
            S_{k,-k}(a^{-1}) +
            \sum_{t \in I} g_{kt}^2 S_{t,-t}(a^{-1})            
            \right)
        \mod \Gamma_i.
    \]
    \begin{proof}
        By Proposition \ref{prop:symplss:length:of:product} we get
        \begin{equation}
            \label{eq:prop:spgen:conj:N:congr:1}
            \begin{aligned}
                S_{i,-i}(a g a^{-1}) &= S_{i,-i}(a) + \sum_{k \in I} \varepsilon_i \varepsilon_k a_{ik}^2 
                    S_{k,-k}(g a^{-1}) \\
                &\quad - 2 \sum_{j,k,l > 0} a_{il} (g a^{-1})_{l,-j} (a g^{-1})_{-j,k} a'_{k,-i} \\
                &\quad - 2 \sum_{j,k > 0} \sum_{l > k} 
                    ( a_{i,-k} (g a^{-1})_{-k,-j} (a g^{-1})_{-j,l} a'_{l,-i} + \\
                &\quad\quad\quad\quad\quad\quad\;\;\;
                      a_{ik} (g a^{-1})_{k,-j} (a g^{-1})_{-j,-l} a'_{-l,-i}).
            \end{aligned}
        \end{equation}
        Consider an individual summand of the second big sum above. By the assumption that $a_{ij} \sigma_{jk} a'_{kl} \le \sigma_{il}$
        we get 
        \begin{equation}
            \label{eq:prop:spgen:conj:N:congr:1.1}
            a_{il} (g a^{-1})_{l,-j} (a g^{-1})_{-j,k} a'_{k,-i} =
            \sum_{p,q \in I} (a_{il} g_{lp} a'_{p,-j}) (a_{-j,q} g'_{qk} a'_{k,-i})
            \le \sigma_{i,-j} \sigma_{-j,-i} \le \sigma_{i,-i}
        \end{equation}
        and therefore the doubled second big sum in \eqref{eq:prop:spgen:conj:N:congr:1} is contained
        in $2 \sigma_{i,-i} \le \Gamma_i$. Applying the same principle to the last summand
        in \eqref{eq:prop:spgen:conj:N:congr:1} we get the inclusion
        \begin{equation}
            \label{eq:prop:spgen:conj:N:congr:1.2}
            a_{i,-k} (g a^{-1})_{-k,-j} (a g^{-1})_{-j,l} a'_{l,-i} + 
            a_{ik} (g a^{-1})_{k,-j} (a g^{-1})_{-j,-l} a'_{-l,-i}) \in \sigma_{i,-i}.
        \end{equation}
        Combining \eqref{eq:prop:spgen:conj:N:congr:1}, \eqref{eq:prop:spgen:conj:N:congr:1.1}
        and \eqref{eq:prop:spgen:conj:N:congr:1.2} we get
        \begin{equation}
            \label{eq:prop:spgen:conj:N:congr:2}
            S_{i,-i}(a g a^{-1}) \equiv S_{i,-i}(a) + \sum_{k \in I} \varepsilon_i \varepsilon_k a_{ik}^2 
                    S_{k,-k}(g a^{-1}) \mod \Gamma_i.
        \end{equation}
        Expand \eqref{eq:prop:spgen:conj:N:congr:2} further using Proposition 
        \ref{prop:symplss:length:of:product}:
        \begin{equation}
            \label{eq:prop:spgen:conj:N:congr:3}
            \begin{aligned}
                S_{i,-i}(a g a^{-1}) 
                &\equiv S_{i,-i}(a) + \sum_{k \in I} \varepsilon_i \varepsilon_k a_{ik}^2 S_{k,-k}(g a^{-1}) \\
                &\equiv S_{i,-i}(a) + \sum_{k \in I} \varepsilon_i \varepsilon_k a_{ik}^2 
                    \left( S_{k,-k}(g) + \sum_{t \in I} \varepsilon_k \varepsilon_t g_{kt}^2 S_{t,-t}(a^{-1}) \right. \\
                &\quad \left. - 2 \sum_{j,t,l > 0} g_{kl} a'_{l,-j} a_{-j,t} g'_{t,-k} \right. \\
                &\quad \left. - 2 \sum_{j,t > 0} \sum_{l > t} \left(
                    g_{k,-t} a'_{-t,-j} a_{-j,l} g'_{l,-k} +
                    g_{it} a'_{t,-j} a_{-j,-l} g'_{-l,-l}
                    \right) \right) \\
                &\equiv S_{i,-i}(a) + \sum_{k \in I} \varepsilon_i \varepsilon_k a_{ik}^2 
                    \left( S_{k,-k}(g) + \sum_{t \in I} \varepsilon_k \varepsilon_t g_{kt}^2 S_{t,-t}(a^{-1}) \right) \\
                &\quad - 2 \sum_{j,t,l > 0} a_{ik} g_{kl} a'_{l,-j} a_{-j,t} g'_{t,-k} a'_{-k,-i} \\
                &\quad - 2 \sum_{j,t > 0} \sum_{l > t} \left(
                    a_{ik} g_{k,-t} a'_{-t,-j} a_{-j,l} g'_{l,-k} a'_{-k,-i} \right. \\
                &\qquad\qquad\quad
                    \left. + a_{ik} g_{it} a'_{t,-j} a_{-j,-l} g'_{-l,-l} a'_{-k,-i} \right) \\
                &\mod \Gamma_i.
            \end{aligned}
        \end{equation}
        Using the same trick as before we may conclude that both doubled terms of
        \eqref{eq:prop:spgen:conj:N:congr:3} are contained in $2 \sigma_{i,-i} \le \Gamma_i$.
        Summing up, we get the congruence
        \begin{equation}
            \label{eq:prop:spgen:conj:N:congr:4}
            \begin{aligned}
                S_{i,-i}(a g a^{-1}) 
                &\equiv S_{i,-i}(a) + \sum_{k \in I} \varepsilon_i \varepsilon_k a_{ik}^2 
                    \left( S_{k,-k}(g) + \sum_{t \in I} \varepsilon_k \varepsilon_t g_{kt}^2 S_{t,-t}(a^{-1}) \right) 
                    \mod \Gamma_i.        
            \end{aligned}
        \end{equation}
        Finally it's easy to see that 
        \[
            \varepsilon_i \varepsilon_k a_{ik}^2 S_{k,-k}(g) = \sum_{l > 0, p \in I} 
                (a_{ik} g_{kl} a'_{lp})(a_{pl} g'_{l,-k} a'_{-k,-i}) \in \sigma_{i,-i}
        \] and
        \[
            \varepsilon_i \varepsilon_k \varepsilon_k \varepsilon_t 
                a_{ik}^2 g_{kt}^2 S_{t,-t}(a^{-1})
            = \sum_{l > 0} (a_{ik} g_{kt} a'_{tl})(a'_{l,-t} g'_{-t,-k} a'_{-k,-i}) \in \sigma_{i,-i}.
        \]
        Therefore the choice of signs in
        \eqref{eq:prop:spgen:conj:N:congr:4} is insignificant and we can 
        rewrite \eqref{eq:prop:spgen:conj:N:congr:4} as follows
        \begin{equation}
            \label{eq:prop:spgen:conj:N:congr:5}
            \begin{aligned}
                S_{i,-i}(a g a^{-1}) 
                &\equiv S_{i,-i}(a) + \sum_{k \in I} a_{ik}^2 
                    \left( S_{k,-k}(g) + \sum_{t \in I} 
                    g_{kt}^2 S_{t,-t}(a^{-1}) \right) 
                    \mod \Gamma_i.        
            \end{aligned}
        \end{equation}        
        It' clear that $S_{i,-i}(e) = 0$ for all $i$. Rewrite the formula
        \eqref{eq:prop:spgen:conj:N:congr:5} for $g = e$. As 
        $S_{i,-i}(a) = \sum_{j > 0} a_{ij} \delta_{jj} a'_{j,-i} \le
        \sigma_{i,-i}$, we can also change the sign at the first term:
        \begin{equation}
            \label{eq:prop:spgen:conj:N:congr:6}
            \begin{aligned}
                0 = S_{i,-i}(a \cdot a^{-1}) 
                &\equiv - S_{i,-i}(a) + \sum_{k \in I} a_{ik}^2 
                    S_{k,-k}(a^{-1}) \mod \Gamma_i.        
            \end{aligned}
        \end{equation}        
        Finally, adding \eqref{eq:prop:spgen:conj:N:congr:6} to
        \eqref{eq:prop:spgen:conj:N:congr:5} we get the required inclusion 
        \begin{equation*}
            \begin{aligned}
                S_{i,-i}(a g a^{-1}) 
                &\equiv \sum_{k \in I} a_{ik}^2 
                    \left( S_{k,-k}(g) + S_{k,-k}(a^{-1}) + \sum_{t \in I} 
                    g_{kt}^2 S_{t,-t}(a^{-1}) \right) 
                    \mod \Gamma_i.        
            \end{aligned}
        \end{equation*}                 
        This completes the proof.
    \end{proof}        
\end{proposition}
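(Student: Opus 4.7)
My plan is to apply Proposition~\ref{prop:symplss:length:of:product} twice---once to the outer product $a \cdot (ga^{-1})$ and once to the inner product $g \cdot a^{-1}$---and to use hypothesis (T1), namely $a_{ij}\sigma_{jk}a'_{kl} \le \sigma_{il}$, to kill every doubled cross-term modulo $\Gamma_i$. The arithmetic engine throughout is the inclusion $2\sigma_{i,-i} \le \Gamma_i$ built into the definition of a form net.

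First I would expand $S_{i,-i}(a \cdot (ga^{-1}))$ by Proposition~\ref{prop:symplss:length:of:product}. The result is $S_{i,-i}(a) + \sum_{k}\varepsilon_i\varepsilon_k a_{ik}^2 S_{k,-k}(ga^{-1})$ plus two doubled sums. A generic summand of the first doubled sum has the shape $a_{il}(ga^{-1})_{l,-j}(ag^{-1})_{-j,k}a'_{k,-i}$; expanding the two inner matrix products and regrouping, I can rewrite it as a sum of products $(a_{il}g_{lp}a'_{p,-j})(a_{-j,q}g'_{qk}a'_{k,-i})$, whose two factors lie in $\sigma_{i,-j}$ and $\sigma_{-j,-i}$ respectively by (T1); the product therefore lies in $\sigma_{i,-j}\sigma_{-j,-i}\le\sigma_{i,-i}$, and twice it in $\Gamma_i$. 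The same factorisation disposes of the second doubled sum.

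Next I would expand $S_{k,-k}(ga^{-1})$ a second time by Proposition~\ref{prop:symplss:length:of:product} and multiply the result by $a_{ik}^2$. After a second round of the same cross-term absorption---each cross-term now factors through a chain of four entries, and (T1) again places it in $\sigma_{i,-i}$---this leaves
\[
S_{i,-i}(aga^{-1}) \equiv S_{i,-i}(a) + \sum_{k}\varepsilon_i\varepsilon_k a_{ik}^2\Bigl(S_{k,-k}(g) + \sum_{t}\varepsilon_k\varepsilon_t g_{kt}^2 S_{t,-t}(a^{-1})\Bigr) \mod \Gamma_i.
\]
A brief check shows that every individual summand $a_{ik}^2 S_{k,-k}(g)$ and $a_{ik}^2 g_{kt}^2 S_{t,-t}(a^{-1})$ already lies in $\sigma_{i,-i}$ by (T1) (expanding $S_{k,-k}(g)$ and $S_{t,-t}(a^{-1})$ as sums and inserting the missing $a$-entries on both sides), and is therefore annihilated modulo $\Gamma_i$ when doubled. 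Consequently I may replace the signs $\varepsilon_i\varepsilon_k$ and $\varepsilon_k\varepsilon_t$ by $+1$ without changing the congruence class.

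To remove the stray $S_{i,-i}(a)$ term, I specialise the congruence just obtained to the trivial case $g = e$. Since $S_{i,-i}(e) = 0$ and $S_{i,-i}(a) \in \sigma_{i,-i}$ (so its sign can also be flipped modulo $\Gamma_i$), this yields $0 \equiv -S_{i,-i}(a) + \sum_k a_{ik}^2 S_{k,-k}(a^{-1}) \mod \Gamma_i$, and adding it to the sign-simplified formula above gives exactly the claimed congruence. The only real obstacle is combinatorial bookkeeping---Proposition~\ref{prop:symplss:length:of:product} produces a thicket of signed sums---but the essential insight is uniform: every cross-term factors through a pair of entries to which (T1) applies, and the factor of two then carries it into $\Gamma_i$.
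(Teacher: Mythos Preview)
Your proposal is correct and follows essentially the same route as the paper's own proof: two applications of Proposition~\ref{prop:symplss:length:of:product}, absorption of every doubled cross-term into $2\sigma_{i,-i}\le\Gamma_i$ via hypothesis (T1), sign-dropping because each surviving summand already lies in $\sigma_{i,-i}$, and finally the $g=e$ specialisation to trade $S_{i,-i}(a)$ for $\sum_k a_{ik}^2 S_{k,-k}(a^{-1})$. The only differences are cosmetic---the paper writes out the intermediate expansions more explicitly---so there is nothing to add.
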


\begin{proof}[\textbf{\textup{Proof of Theorem \ref{theorem:symplss:transpdescr}}}]
   	Denote by $N$ the set of all matrices in $\Sp(2n,R)$ satisfying the
   	conditions (T1) -- (T3). It's easy to see that 
   	$N \le \N_{\Sp(2n, R)}(\Sp(\sigma, \Gamma))$. Indeed, pick any
   	$g \in \Sp(\sigma, \Gamma)$ and any $a \in N$. Then condition
   	(T1) guarantees that 
   	\[
   		(a g a^{-1})_{ij} = \sum_{p,q \in I} a_{ip} g_{pq} a'_{qj}    	
   		\le \sum_{p,q \in I} a_{ip} \sigma_{pq} a'_{qj} \le \sigma_{ij}
   	\]
   	for all $i,j \in I$. Now applying Proposition
   	\ref{prop:spgen:conj:N:congr} we get
   	\[
        S_{i,-i}(a g a^{-1}) \equiv
   	    	\sum_{k \in I} \left( a_{ik}^2 
       	     S_{k,-k}(g) + 
           	a_{ik}^2 S_{k,-k}(a^{-1}) +
            \sum_{t \in I} a_{ik}^2 g_{kt}^2 S_{t,-t}(a^{-1})            
   	        \right)
        \mod \Gamma_i.
       \]
	Observe that by condition (T3) it follows that 
	$a_{ik}^2 S_{k,-k}(g^{-1}) \in a_{ik}^2 \Gamma_k \le \Gamma_i$.
	Next, by condition (T2) we get 
	$a_{ik}^2 g_{kt}^2 S_{t,-t}(a^{-1}) \in 
	a_{ik}^2 \sigma_{kt}^\rectangled{2} S_{t,-t}(a^{-1}) \le \Gamma_i$
    and
    $a_{ik}^2 S_{k,-k}(a^{-1}) = a_{ik}^2 \cdot 1^2 \cdot S_{k,-k}(a^{-1})
    \in a_{ik}^2 \sigma^2_{kk} S_{k,-k}(a^{-1}) \le \Gamma_i$. Therefore,
    $S_{i,-i}(aga^{-1})) \in \Gamma_i$ for all $i$. It follows that
    $aga^{-1} \in \Sp(\sigma, \Gamma)$ and thus 
    $a \in \N_{\Sp(2n, R)}(\Sp(\sigma, \Gamma))$.
        
    The proof of the inclusion $\Transp_{\Sp(2n, R)}(\Ep(\sigma, \Gamma),
    \Sp(\sigma, \Gamma)) \le N$ is slightly trickier. Consider an 
    arbitrary matrix $a$ in $\Transp_{\Sp(2n,R)}(\Ep(\sigma, \Gamma),
    \Sp(\sigma))$ and a short $(\sigma,\Gamma)$-elementary transvection 
    $T_{rs}(\xi)$. By definition of transporter we get
	\begin{equation}
		\label{eq:norm:descr:pair}
		\delta_{ij} + a_{ir} \xi a'_{sj} -  \varepsilon(r)\varepsilon(s) 
		a_{i,-s} \xi a'_{-r, j} 
		= ({}^a T_{rs}(\xi))_{ij} \in \sigma_{ij}.
	\end{equation}
	Now given two short $(\sigma, \Gamma)$-elementary transvections $T_{rs}(\xi)$ and
	$T_{st}(\zeta)$ such that $r \neq \pm t$ we get by a straightforward
	computation
	\begin{equation}
	    \label{eq:norm:descr:short:short}
	    \delta_{ij} + a_{ir} \xi \zeta a'_{tj} = 
	    (a T_{rs}(\xi) T_{st}(\zeta) a^{-1})_{ij} - 
	    (a T_{rs}(\xi) a^{-1})_{ij} -
	    (a T_{st}(\zeta) a^{-1})_{ij} + \delta_{ij}.
	\end{equation}
	And therefore using \eqref{eq:norm:descr:pair} we get the inclusions
	$a_{ir} \sigma_{rs} \sigma_{sr} a'_{tj} \le \sigma_{ij}$ for all
	$i,j,s,r,t \in I$ such that $s \neq \pm r, \pm t$ and $r \neq \pm t$.

	Next for a long $(\sigma, \Gamma)$-elementary transvection $T_{s,-s}(\alpha)$ we get
    \begin{equation}
    	\label{eq:norm:descr:long}
        \delta_{ij} + a_{is} \alpha a'_{-s,j} = ({}^a T_{s,-s}(\alpha))_{ij}
        \in \sigma_{ij}.
    \end{equation}
	Finally for $r,s \in I$ such that $r \neq \pm s$ we get
    \begin{equation}
    	\label{eq:norm:descr:short:long}
        \delta_{ij} + a_{ir} \xi \alpha a'_{-s,j} = 
        (a T_{rs}(\xi) T_{s,-s}(\alpha) a^{-1})_{ij} - 
		(a T_{rs}(\xi) a^{-1})_{ij} -
		(a T_{s,-s}(\alpha) a^{-1})_{ij} + \delta_{ij}
    \end{equation}
    and therefore by \eqref{eq:norm:descr:long} we get the inclusions
	$a_{ir} \sigma_{rs} \Gamma_{s,-s} a'_{rj} \le \sigma_{ij}$ for all 
	$i,j \in I$ and all $r,s \in I$ such that $s \neq \pm r$.
        
	Now let $r$ and $t$ be two indices such that $r \neq \pm t$. Then
    either $\nu(r) = \{\pm r, \pm t \}$, or there exists an index
    $s \sim t$ such that $s \neq \pm r, \pm t$. In the former case
    using \eqref{eq:norm:descr:short:long} we get
    \[
    	a_{ir} \sigma_{rt} a'_{tj} = a_{ir} R \sigma_{rt} a'_{tj} =
        a_{ir} \sigma_{r,-t} \Gamma_{-t,t} a'_{tj} \le \sigma_{ij}
    \]
	for all $i,j \in I$. In the latter case using
    \eqref{eq:norm:descr:short:short} we get
    \[
    	a_{ir} \sigma_{rt} a'_{tj} = a_{ir} \sigma_{rt} R a'_{tj} =
        a_{ir} \sigma_{rs} \sigma_{st} a'_{tj} \le \sigma_{ij}
	\]
    for all $i, j \in I$.
        
    Now assume $t = r$. Then there exists an index $s \sim r$ such that
    $s \neq \pm r$ and using the fact that $\sum_{l \in I} a'_{sl} a_{ls} = 1$
    we get
	\[ 
    	a_{ir} \sigma_{rr} a'_{rj} = \sum_{l \in I} (a_{ir} \sigma_{rs}
    	a'_{sl})
       (a_{ls} \sigma_{sr} a'_{rj}) \le \sum_{l \in I} \sigma_{il} 
       \sigma_{lj} \le \sigma_{ij}. 
	\]
    Finally if $t = -r$ then $\sigma_{r,-r} = \sum_{l \neq \pm r}
	\sigma_{rl} \sigma_{l,-r} + \left\langle \Gamma_r \right\rangle$.
	By \eqref{eq:norm:descr:long} it follows that
	\[ 
		a_{ir} \left\langle \Gamma_r \right\rangle a'_{-r,j} \le 
    	\sigma_{ij}. 
	\]
	It's only left to notice that
	\[ 
		a_{ir} \sigma_{rl} \sigma_{l,-r} a'_{rj} = \sum_k (a_{ir}
    	\sigma_{rl} a'_{lk})
        (a_{kl} \sigma_{l,-r} a'_{-r,j}) \le \sum_k \sigma_{ik} \sigma_{kj}
        \le \sigma_{ij}. 
	\]   
    Therefore, any matrix $a$ in the transporter satisfies condition
    (T1). In particular, we can apply Proposition
    \ref{prop:spgen:conj:N:congr} to any such matrix $a$. 
        
    Pick any short $(\sigma, \Gamma)$-elementary transvection $T_{jk}(\xi)$. By 
    Proposition \ref{prop:spgen:conj:N:congr} we get        
	\begin{equation}
    	\label{eq:theorem:spgen:transporter:descr:short}
        S_{i,-i}(a T_{jk}(\xi) a^{-1}) \equiv a_{ij}^2 \xi^2 S_{k,-k}(a^{-1}) +
        a_{i,-k}^2 \xi^2 S_{-j,j}(a^{-1}) \mod \Gamma_i.
    \end{equation}
	Now, given a long $(\sigma, \Gamma)$-elementary transvection $T_{j,-j}(\alpha)$
	we obtain by the same proposition
    \begin{equation}
    	\label{eq:theorem:spgen:transporter:descr:long}
        S_{i,-i}(a T_{j,-j}(\alpha) a^{-1}) \equiv a_{ij}^2 \alpha + 
        a_{ij}^2 \alpha^2 S_{-j,j}(a^{-1}) \mod \Gamma_i.
    \end{equation}
	Given two short $(\sigma, \Gamma)$-elementary transvections $T_{jk}(\xi)$ and
    $T_{km}(\zeta)$ such that $j \neq \pm m$ we get 
    \begin{equation}
    	\label{eq:theorem:spgen:transporter:descr:short:short}
        \begin{aligned}
        	S_{i,-i}(a T_{jk}(\xi) T_{km}(\zeta) a^{-1}) 
        	&\equiv a_{ij}^2 \xi^2 S_{k,-k}(a^{-1}) +
			a_{i,-k}^2 \xi^2 S_{-j,j}(a^{-1}) \\
            &\quad
            + a_{ik}^2 \zeta^2 S_{m,-m}(a^{-1}) + 
            a_{i,-m}^2 \zeta^2 s_{-k,k}(a^{-1}) \\
            &\quad 
            + a_{ij}^2 \xi^2 \zeta^2 S_{m,-m}(a^{-1}) \mod \Gamma_i.
		\end{aligned}
	\end{equation}
    Finally given a short and a long $(\sigma,\Gamma)$-elementary transvections 
    $T_{jk}(\xi)$ and $T_{k,-k}(\alpha)$ we get 
	\begin{equation}
    	\label{eq:theorem:spgen:transporter:descr:short:long}
        \begin{aligned}
        	S_{i,-i}(a T_{jk}(\xi) T_{k,-k}(\zeta) a^{-1}) 
        	&\equiv a_{ij}^2 \xi^2 S_{k,-k}(a^{-1}) +
			a_{i,-k}^2 \xi^2 S_{-j,j}(a^{-1}) \\
            &\quad
            + a_{ik}^2 \alpha + a_{ik}^2 \alpha^2 S_{-k,k}(a^{-1}) \\
            &\quad 
            + a_{ij}^2 \xi^2 \alpha^2 S_{-k,k}(a^{-1}) \mod \Gamma_i.
        \end{aligned}
	\end{equation}
    Comparing \eqref{eq:theorem:spgen:transporter:descr:short:short} and
    \eqref{eq:theorem:spgen:transporter:descr:short} we get the inclusions
	\[
    	a_{ij}^2 \sigma_{jk}^\rectangled{2} \sigma_{km}^\rectangled{2} 
    	S_{k,-k}(a^{-1}) \in \Gamma_i
	\]
    for all $i,j,k,m \in I$ such that $j \neq \pm k, \pm m$ and $k \neq \pm m$.
    Similarly comparing 
    \eqref{eq:theorem:spgen:transporter:descr:short:long} with
    \eqref{eq:theorem:spgen:transporter:descr:long} and
    \eqref{eq:theorem:spgen:transporter:descr:short} we get the inclusions
	\[
    	a_{ij}^2 \sigma_{jm}^\rectangled{2} \Gamma_m^\rectangled{2} 
    	S_{-m,m}(a^{-1}) \in \Gamma_i
	\]
    for all $i,j,m \in I$ such that $j \neq \pm m$.
        
    Now let $j,m \in I$ such that $j \neq \pm m$. As $h(\nu) \geq (4,3)$ 
    either there exists an index $k \sim m$ such that $k \neq \pm j, \pm m$ 
    or $-m \sim m$. In the first case we get 
    \[
		a_{ij}^2 \sigma_{jm}^\rectangled{2} S_{m,-m}(a^{-1}) = 
        a_{ij}^2 \sigma_{jk}^\rectangled{2} R^\rectangled{2} S_{m,-m}(a^{-1}) =             
		a_{ij}^2 \sigma_{jk}^\rectangled{2} \sigma_{km}^\rectangled{2} 
		S_{m,-m}(a^{-1}) \le \Gamma_i
	\]
    for all $i \in I$. In the second case we get similarly
    \[
    	a_{ij}^2 \sigma_{jm}^\rectangled{2} S_{m,-m}(a^{-1}) = 
        a_{ij}^2 \sigma_{j,-m}^\rectangled{2} R^\rectangled{2} 
        S_{m,-m}(a^{-1}) = 
		a_{ij}^2 \sigma_{j,-m}^\rectangled{2} \Gamma_{-m}^\rectangled{2} 
		S_{m,-m}(a^{-1}) \le \Gamma_i
	\]
    for all $i \in I$. To prove the inclusions (T2) for the matrix $a$ it's
    only left to consider the cases when $m = j$ and $m = -j$. Fix an index 
    $k \sim j$ such that $k \neq \pm j$. Observe that 
    \[
    	1 = \left( \sum_{t \in I} a'_{kt} a_{tk} \right)^2 \equiv 
    	\sum_{t \in I} a^{\prime 2}_{kt} a_{tk}^2 \mod 2 R
	\]
    and therefore
    \[
    	\begin{aligned}
        	a_{ij}^2 \sigma_{jm}^\rectangled{2} S_{m,-m}(a^{-1}) & = 
	        a_{ij}^2 \sigma_{jk}^\rectangled{2} \left( \sum_{t \in I} 
    	    a'_{kt} a_{tk} \right)^2 \sigma_{km}^\rectangled{2} 
    	    S_{m,-m}(a^{-1}) 
	        \\
			&\quad 
        	\equiv \sum_t \left( a_{ij}^2 \sigma_{jk}^\rectangled{2} 
	        (a^{\prime 2}_{kt}) \right)
    	    \left( a_{tk}^2 \sigma_{km}^\rectangled{2} S_{m,-m}(a^{-1}) \right) 
        	\\
			&\le
    	    \sum_{t \in I} \sigma_{it}^2 \Gamma_t \le \Gamma_i,
	    \end{aligned}
	\]
    where the congruence is meant modulo $\Gamma_i$.
        
    Thus $a$ satisfies condition (T2). Finally using 
    \eqref{eq:theorem:spgen:transporter:descr:long} and (T2) we get the
    inclusions (T3) for all $i,j \in I$.
    Thus we have proved that
    \[
		\Transp_{\Sp(2n,R)}(\Ep(\sigma, \Gamma),\Sp(\sigma, \Gamma)) \le N.
	\]
        
    Finally, it is easy to see that $\Transp$ is contravariant in the first variable and therefore
    \[
    	\N_{\Sp(2n,R)}(\Sp(\sigma, \Gamma)) \le
        \Transp_{\Sp(2n,R)}(\Ep(\sigma, \Gamma), \Sp(\sigma, \Gamma)).
	\] 
	Hence also
    \[
    	\N_{\Sp(2n,R)}(\Sp(\sigma, \Gamma)) =
        \Transp_{\Sp(2n,R)}(\Ep(\sigma, \Gamma), \Sp(\sigma, \Gamma)) = N.
	\] 
    \end{proof}

\section{Standard setting}
\label{sec:spgen:stsetting}

From this section on we focus on proving Theorem \ref{theorem:spgen:main}. Our proof is a
\textit{loclization} based proof. In order to avoid all the hustle with zero divisors,
injectivity of the localization morphism and so forth we introduce the concept of
a \textit{standard setting}. In the end of this section we present two conceptual examples
of standard settings which motivate the definition.

Let $R$ be a commutative associative unital ring, $R'$ a unital subring of $R$ 
and $S$ a subset of the intersection $R' \cap R^*$, where $R^*$ stands 
for the set of invertible elements of the ring $R$. We call the triple
$(R, R', S)$ \textit{a standard setting} if for any $\xi \in R$ 
there exist an element $x$ in $S$ such that $x \xi \in R'$. 
Clearly, the canonical ring homomorphism $S^{-1} R' \rightarrow R$ is an isomorphism.
Now let $(\sigma', \Gamma')$ be an exact form net of ideals of rank 
$2n$ over $R'$ such that $[\nu]_{R'} \le (\sigma', \Gamma')$. For each 
$i,j \in I$ set
\begin{align*}
    \sigma_{ij} &= \{ \xi \in R \;|\; \exists x \in S \; 
    	x \xi \in \sigma'_{ij} \} \\
    \Gamma_i &= \{ \alpha \in R \;|\; \exists x \in S \; 
    	x^2 \alpha \in \Gamma'_i \}.
\end{align*}
We will call the pair $(\sigma, \Gamma)$ \textit{the $S$-closure of the form net
of ideals  $(\sigma', \Gamma')$ \textup{[}in $R$\textup{]}.} We
will show (Proposition \ref{prop:spgen:localized:nets:are:nets:case:g=e})
that $S$-closures of exact form $\D$-nets of ideals over $R'$ are 
exact form $\D$-nets of ideals over $R$.

Fix a subgroup $H$ of $\Sp(2n, R)$. We call a form net of ideals $(\sigma', \Gamma')$ 
over $R'$ \textit{$S$-associated with the subgroup $H$} if the following two conditions are fulfilled:
\begin{enumerate}
	\item $\Ep(\sigma', \Gamma') \le H$
	\item For any elementary symplectic transvection $T_{sr}(\xi)$ contained in $H$
	    there exists an element $x \in S$ such that
    	$x^{(1 + \delta_{r,-s})} \xi \in (\sigma', \Gamma')_{sr}$.
\end{enumerate}
It is easy to see that a subgroup may have several different $S$-associated nets, but
their $S$-closures in $R$ will coincide.

We will introduce now a family of net-like objects. For an arbitrary $g \in \Sp(2n,R)$ set
\begin{equation*}
\begin{aligned}
    \sigma^g_{ij} &= \{ \xi \in R \;|\; \exists x \in S \; \forall \theta \in R' \; {}^g T_{ij}(x \theta \xi) 
    \in H \}, i \neq \pm j \\
    \sigma^g_{ii} &= R \\
    \Gamma^g_{i} &= \{ \alpha \in R \;|\; \exists x \in S \; \forall \theta \in R' \; {}^g T_{i,-i}(x^2 \theta^2 \alpha)
    \in H \} \\
    \sigma^g_{i,-i} &= \sum_{j \neq \pm i} \sigma^g_{ij} \sigma^g_{j,-i} + \left\langle \Gamma^g_{i} \right\rangle_R,
\end{aligned}
\end{equation*}
where the product $\sigma^g_{ij} \sigma^g_{j,-i}$ denotes 
the product of ideals, that is the ideal generated by all products $\xi \zeta$, where
$\xi \in \sigma_{ij}^g$ and $\zeta \in \sigma_{j,-i}^g$. In general there is no guarantee that the
objects $(\sigma^g, \Gamma^g)$, defined in the obvious way from the above data are form nets of ideals. 
We will show that in cases of interest to us
the objects ($\sigma^g, \Gamma^g)$ are form nets of ideals and coincide with the $S$-closure of any
net which is $S$-associated with the subgroup $H$.

For the rest of this section we fix a standard setting $(R, R', S)$, a unitary equivalence relation $\nu$
and a subgroup $H$ of $\Sp(2n, R)$.

\begin{proposition}
    \label{prop:spgen:localized:nets:are:nets:case:g=e}
    Let $(\sigma', \Gamma')$ be an exact major form net of ideals over $R'$ and $(\sigma, \Gamma)$
    the $S$-closure of $(\sigma', \Gamma')$ in $R$. Then $(\sigma, \Gamma)$
    is an exact major form net of ideals over $R$.
    Further, assume that $h(\nu) \geq (4,3)$ and that $(\sigma', \Gamma')$ is $S$-associated
    with the subgroup $H$. Then the form net of ideals $(\sigma, \Gamma)$
    is coordinate-wise equal to $(\sigma^e, \Gamma^e)$.
    \begin{proof}
		Clearly $\sigma_{ij} = R$ whenever $i \sim j$ and
		$\Gamma_i = R$ whenever $i \sim -i$. We will show first that for
		all $i,j \in I$ the sets $\sigma_{ij}$ and $\Gamma_i$ are additive
		subgroups of $R$. Let $\xi, \zeta \in (\sigma, \Gamma)_{ij}$.
		By definition, there exist elements $x, y$ in $S$ such that 
		$x^{(1+\delta_{j,-i})} \xi, y^{(1+\delta_{j,-i})} \zeta \in 
		(\sigma', \Gamma')_{ij}$. As $(\sigma', \Gamma')$ is a form net
		of ideals, it follows that $(xy)^{(1+\delta_{j,-i})} \xi,
		(xy)^{(1+\delta_{j,-i})} \zeta \in (\sigma', \Gamma')_{ij}$
		and thus also $(xy)^{(1+\delta_{j,-i})} (\xi + \zeta) \in 
		(\sigma', \Gamma')_{ij}$. Therefore $\xi + \zeta \in (\sigma,
		\Gamma)_{ij}$. The rest of the properties of $(\sigma, \Gamma)$
		as an exact form net of ideals can be deduced in the same way
		from the corresponding properties of $(\sigma', \Gamma')$.
		
		Assume $h(\nu) \geq (4,3)$. It's obvious that 
		$(\sigma^e, \Gamma^e)_{ij} \le (\sigma, \Gamma)_{ij}$ for all 
		possible indices $i$ and $j$ and thus also that $\sigma^e_{i,-i} \le
		\sigma_{i,-i}$ for all $i \in I$. The reverse inclusions are obtained in the
		following way. Fix some $i \nsim j$ and $\xi \in (\sigma,
		\Gamma)_{ij}$. By definition, there exists an element
		$x \in S$ such that $T_{ij}(x^{(1+\delta_{i,-j})} \xi) \in H$. Assume first,
		$i \neq -j$. Then, as $h(\nu) \geq (4,3)$, there exists another 
		index $k \sim j$ such that $k \neq \pm j, \pm i$. Then $T_{jk}(\theta),
		T_{kj}(1) \in H$ for all $\theta \in R'$ and therefore
		\[
			T_{ij}(x \theta \xi) = 
			[[T_{ij}(x \xi), T_{jk}(\theta)], T_{kj}(1)] \in H.
		\]
		Hence, $\xi \in \sigma^e_{ij}$. If $i = -j$ then there exists
		another index $k \sim i$ such that $k \neq \pm i$. As 
		$(\sigma, \Gamma)$ is an exact form net of ideals, it follows by
		Proposition \ref{prop:symplss:change:indices} that
		$x^2 \xi \in \Gamma'_{k,-k}$. Thus we get
		\[
			T_{i,-i}(-\varepsilon_i \varepsilon_j x^2 \theta^2 \xi) 
			T_{k,-i}(x^2 \theta \xi) = [T_{k,-k}(x^2 \xi), T_{-k,-i}(\theta)]
			\in H.
		\]
		If $k \sim -i$, then $T_{k,-i}(x^2 \theta \xi) \in H$ and therefore
		$T_{i,-i}(-\varepsilon_i \varepsilon_j x^2 \theta^2 \xi) \in H$.
		If $k \nsim -i$, there exists another index $l \sim k$ such that 
		$l \neq \pm k, \pm i$. By relation (R4)
		\[
			T_{k,-i}(- x^2 \theta \xi) = 
				[T_{kl}(1), [T_{lk}(-1), [T_{k,-k}(x^2 \xi), 
				T_{-k,-i}(\theta)]]] \in H.
		\]		
		Therefore 
		$T_{i,-i}(-\varepsilon_i \varepsilon_j x^2 \theta^2 \xi) \in H$
		and $\xi \in \Gamma_{i}^e$. Summing up, $(\sigma, \Gamma)_{ij} \le
		(\sigma, \Gamma)^e_{ij}$ for all $i,j \in I$. As $(\sigma, \Gamma)$ is exact, it follows that
		$\sigma_{i,-i} \le \sigma_{i,-i}^e$. This completes the
		proof.
    \end{proof}   
\end{proposition}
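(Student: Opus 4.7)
The plan is to verify the two assertions in turn: first that $(\sigma,\Gamma)$ is an exact major form net of ideals over $R$, and then, under the extra hypotheses, that it agrees coordinate-wise with $(\sigma^e, \Gamma^e)$.

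For the first assertion, my strategy is to transfer every axiom of an exact form net from $(\sigma', \Gamma')$ to $(\sigma,\Gamma)$ using the fact that $S$ is multiplicatively closed together with the defining property of a standard setting. Given $\xi,\zeta \in \sigma_{ij}$ with witnesses $x,y \in S$ satisfying $x\xi,\, y\zeta \in \sigma'_{ij}$, the product $xy \in S$ serves as a common witness to $xy(\xi+\zeta) \in \sigma'_{ij}$, proving additive closure; the same argument with squared witnesses handles $\Gamma_i$. The inclusions $\sigma_{ik}\sigma_{kj} \le \sigma_{ij}$, $\sigma_{ij}=\sigma_{-j,-i}$, $R\cdot\sigma_{ij}\le\sigma_{ij}$, $2\sigma_{i,-i}\le\Gamma_i\le\sigma_{i,-i}$, and $\sigma_{ij}^{\rectangled{2}}\Gamma_j\le\Gamma_i$ are each obtained in the same style by pulling representatives down to $R'$, applying the corresponding axiom of $(\sigma',\Gamma')$, and recording a single witness in $S$. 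The $\D$-net property $\sigma_{ii}=R$ and the exactness formula for $\sigma_{i,-i}$ are built into the definition. Finally, majorness $[\nu]_R\le(\sigma,\Gamma)$ follows from the fact that $\sigma'_{ij}=R'$ whenever $i\sim j$ and $\Gamma'_i=R'$ whenever $i\sim -i$: any $\xi\in R$ admits some $x\in S$ with $x\xi\in R'$, placing $\xi$ in the appropriate $S$-closure.

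For the second assertion, the inclusion $(\sigma^e,\Gamma^e)\le(\sigma,\Gamma)$ is essentially immediate. Given $\xi\in\sigma^e_{ij}$ with $i\neq\pm j$, there is $x\in S$ such that taking $\theta=1$ yields $T_{ij}(x\xi)\in H$; the $S$-association of $(\sigma',\Gamma')$ with $H$ then supplies $y\in S$ with $yx\xi\in\sigma'_{ij}$, so $xy\in S$ witnesses $\xi\in\sigma_{ij}$. The analogous step with squared witnesses handles $\Gamma$. The substantive direction is $(\sigma,\Gamma)\le(\sigma^e,\Gamma^e)$, which is where I would invoke $h(\nu)\ge(4,3)$. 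Starting from a witness $x\in S$ such that $T_{ij}(x^{1+\delta_{i,-j}}\xi)\in\Ep(\sigma',\Gamma')\le H$, I need to upgrade this to $T_{ij}(x\theta\xi)\in H$ for every $\theta\in R'$ in the short case, and correspondingly to $T_{i,-i}(x^2\theta^2\xi)\in H$ in the long case. In the short case $i\neq\pm j$, pick $k\sim j$ with $k\neq\pm i,\pm j$; since $T_{jk}(\theta),T_{kj}(1)\in\Ep(\nu,R)\le H$, the Steinberg identities (R4) give $T_{ij}(x\theta\xi)=[[T_{ij}(x\xi),T_{jk}(\theta)],T_{kj}(1)]\in H$, whence $\xi\in\sigma^e_{ij}$.

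In the long case $i=-j$, first transfer $x^2\xi$ to $\Gamma'_k$ for an auxiliary $k\sim i$ with $k\neq\pm i$ via Proposition \ref{prop:symplss:change:indices}, so that $T_{k,-k}(x^2\xi)\in H$. Commuting this with $T_{-k,-i}(\theta)\in\Ep(\nu,R)\le H$ via (R6) produces a product of the form $T_{i,-i}(\pm x^2\theta^2\xi)\cdot T_{k,-i}(x^2\theta\xi)\in H$. If $k\sim -i$, the short factor already lies in $\Ep(\nu,R)\le H$ and can be cancelled directly; otherwise one picks a further index $l\sim k$ with $l\neq\pm k,\pm i$ and uses nested (R4) commutators to isolate that short factor, allowing the long factor $T_{i,-i}(\pm x^2\theta^2\xi)$ to survive on its own. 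The chief obstacle is precisely this $i=-j$ case: one must juggle several auxiliary indices, track the correct squared factor demanded by the definition of $\Gamma^e_i$, and split on whether the first auxiliary index is equivalent to $-i$ or not. Once this is dispatched, the remaining coordinate $\sigma_{i,-i}=\sigma^e_{i,-i}$ falls out of the exactness of both sides, completing the coincidence of the two nets.
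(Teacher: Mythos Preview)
Your proposal is correct and follows essentially the same route as the paper's proof: the form net axioms are transferred from $(\sigma',\Gamma')$ to its $S$-closure via common witnesses in $S$, and the equality $(\sigma,\Gamma)=(\sigma^e,\Gamma^e)$ is established with the same commutator identities (R4) and (R6), the same choice of auxiliary indices afforded by $h(\nu)\ge(4,3)$, and the same subcase split on whether $k\sim -i$. One minor quibble: exactness of the $S$-closure is not literally ``built into the definition'' but must be deduced from exactness of $(\sigma',\Gamma')$ by the same witness-pushing argument---the paper also leaves this step implicit, so this is a wording slip rather than a gap.
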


The last proposition allows us to consider the elementary form net subgroup 
$\Ep(\sigma, \Gamma)$ of $\Sp(2n, R)$. The following proposition establishes
certain properties of the objects $(\sigma^g, \Gamma^g)$
which follow directly from their definition and the Steinberg relations.
This shows that $(\sigma^g, \Gamma^g)$ is ``almost a form net of ideals''.

\begin{proposition}
	\label{prop:spgen:sigma:g:is:almost:a:net}
	Assume $h(\nu) \geq (4,3)$. Let $g$ be an element of $\Ep(\sigma, \Gamma)$.
	If $[\nu]_{R} \le (\sigma^g, \Gamma^g)$ coordinate-wise then 
	the following inclusions hold:
	\begin{enumerate}
		\item $\sigma^g_{ij} \sigma^g_{jk} \le \sigma^g_{ik}$ for all 
			$i \neq \pm j$, $j \neq \pm k$ 
		\item $\Gamma^g_i \sigma^g_{-i,k} \le \sigma^g_{ik}$ and 
			$\sigma^g_{i,-k} \Gamma^g_{-k} \le \sigma^g_{ik}$ for all $i, k \in I$
		\item $(\sigma_{ij}^g)^\rectangled{2} \Gamma^g_j \le \Gamma_i^g$ for all $i \neq \pm j$
		\item $2 \sigma^g_{ij} \sigma^g_{j,-i} \le \Gamma_i^g$ for all 
		$i \neq \pm j$,
	\end{enumerate}	
    where products are Minkowski products of sets, i.e. sets of products of elements of the factors.
	\begin{proof}
		1. The first property follows directly from the Steinberg relation
		(R4). Indeed, pick any $\xi \in \sigma_{ij}$ and any $\zeta \in
		\sigma_{jk}$ such that $i \neq \pm j, \pm k$ and $j \neq \pm k$.
		Then there exist elements $x_\xi, x_\zeta \in S$ such that
		${}^g T_{ij}(x_\xi \xi), {}^g T_{jk}(x_\zeta \theta \zeta) \in H$ 
		for all $\theta \in R'$. By relation (R4) we get
		\[
			{}^g T_{ik} (x_\xi x_\zeta \theta \xi \zeta) =
			[{}^g T_{ij}(x_\xi \xi), {}^g T_{jk}(x_\zeta \theta \zeta)]
			\in H
		\]
		for all $\theta \in R'$. Therefore $\xi \zeta \in \sigma^g_{ik}$.
		The corresponding inclusions for the cases when $i = \pm k$ trivially
		follow from the definition of $(\sigma^g, \Gamma^g)$.

		2. The second inclusion is trivial when $i = \pm k$ for the same
		reason as above. Assume $i \neq \pm k$. We will prove the inclusion
		$\Gamma^g_i \sigma^g_{-i,k} \le \sigma^g_{ik}$. The other one can be
		treated similarly. 	Pick any $\alpha \in \Gamma_i$ and 
		$\xi \in \sigma_{-i,k}$. Then there exist elements $x_\alpha, x_\xi \in S$
		such that for any $\theta \in R'$ we have
		\[
			{}^g T_{i,-i}(x_\alpha^2 \alpha),
			{}^g T_{-i,k}(x_\xi \theta \xi) \in H.
		\] 
		By relation (R6) it follows that
		\begin{equation}
			\label{eq:prop:spgen:sigma:g:is:almost:a:net:1}		
			{}^g T_{ik}(x_\alpha^2 x_\xi \theta \alpha \xi)
			{}^g T_{-k,k}(x_\alpha^2 x_\xi^2 \theta^2 \alpha \xi^2) = 
			[{}^g T_{i,-i}(x_\alpha^2 \alpha),
			{}^g T_{-i,k}(x_\xi \theta \xi)] \in H.
		\end{equation}
		If $k \sim -k$ then by the definition of $\Gamma^g_{-k}$
		we get ${}^g T_{-k,k}(x_\alpha^2 x_\xi^2 \theta^2 \alpha \xi^2) \in H$.
		Thus we get ${}^g T_{ik}(x_\alpha^2 x_\xi \theta \alpha \xi) \in H$ and
		$\alpha \xi \in \sigma^g_{ik}$. If $k \nsim -k$ then,
		as $h(\nu) \geq (4,3)$, there exists another index $l \sim i$
		such that $l \neq \pm i, \pm k$. Then there exist elements $x_1,x_2 \in S$
		such that ${}^g T_{li}(x_1), {}^g T_{il}(x_2) \in H$.		
		By the Steinberg relations
		(R3) and (R4) together with \eqref{eq:prop:spgen:sigma:g:is:almost:a:net:1}
		we get
		\[
			{}^g T_{ik} (x_1 x_2 x_\alpha^2 x_\xi \theta \alpha \xi) =
			[{}^g T_{il}(x_2), [{}^g T_{li}(x_1),
			{}^g T_{ik}(x_\alpha^2 x_\xi \theta \alpha \xi)
			{}^g T_{-k,k}(x_\alpha^2 x_\xi^2 \theta^2 \alpha \xi^2)]] \in H
		\]
		for all $\theta \in R'$. It follows that $\alpha \xi \in \sigma^g_{ik}$.
		
		3. The next series of inclusions is established similarly. Fix some indices $i \neq \pm j$,
		an element $\xi \in \sigma^g_{ij}$ and an element $\alpha \in \Gamma^g_j$. Then there exist elements 
        $x_\xi, x_\alpha \in S$ such that 
        \begin{equation}
			\label{eq:prop:spgen:sigma:g:is:almost:a:net:3}		
            {}^g T_{i,-j}(x_\xi x_\alpha^2 \theta \xi \alpha) 
            \cdot T_{i,-i}(x_\xi^2 x_\alpha^2 \theta^2 \xi^2 \alpha) = 
            [{}^g T_{ij}(x_\xi \theta \xi), {}^g T_{j,-j}(x_\alpha^2 \alpha)] \in H
        \end{equation}
        for all $\theta \in R'$. By assertion (2) of the current lemma, the first
        term of the left-hand side of \eqref{eq:prop:spgen:sigma:g:is:almost:a:net:3}
        is contained in $H$ whenever $\theta$ is a multiple of 
        some $x_0 \in S$. Therefore the second term of \eqref{eq:prop:spgen:sigma:g:is:almost:a:net:3}
        is also contained in $H$ for the same values of parameter $\theta$. This shows that $\xi^2 \alpha \in 
        \Gamma_i^g$. 
        
        4. Finally, fix an index $i \neq \pm j$, an element $\xi \in \sigma^g_{ij}$ and
        an element $\zeta \in \sigma^g_{j,-i}$. Then there exist $x_\xi, x_\zeta \in S$
        such that ${}^g T_{ij}(x_\xi \theta \xi), 
        {}^g T_{j,-i}(x_\zeta \theta \zeta)
        \in H$ for all $\theta \in R'$; in particular 
        ${}^g T_{ij}(x_\xi x_\zeta \theta \xi), 
        {}^g T_{j,-i}(x_\zeta x_\xi \theta
        \zeta) \in H$ for all $\theta \in R'$. 
        By the Steinberg relation (R5) it follows that
        \[
        	{}^g T_{i,-i}(2 x_\xi^2 x_\zeta^2 \theta^2 \xi \zeta) \in H
        \]
        for all $\theta \in R'$. Hence, $\xi \zeta \in \Gamma_i$. 
	\end{proof}
\end{proposition}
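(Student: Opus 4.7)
The plan is to transfer the Steinberg identities (R1)--(R6) through conjugation by $g$: because these identities hold in $\Sp(2n,R)$, they continue to hold after conjugating each factor by $g$, and by the very definition of $(\sigma^g, \Gamma^g)$ the conjugate ${}^g T_{ij}(\xi)$ lands in $H$ as soon as $\xi$ is of the shape (witness from $S$)$\cdot$(free parameter $\theta \in R'$)$\cdot$(element of the corresponding ideal). The hypothesis $[\nu]_R \le (\sigma^g,\Gamma^g)$, combined with $h(\nu)\geq (4,3)$, will supply us with ``free'' auxiliary transvections ${}^g T_{jk}$ whenever $j \sim k$, and these will be used to iron out the error terms produced by the noncommutativity of the Steinberg relations.

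I would dispose of (1) and (4) first, since they are direct. For (1), take $\xi \in \sigma^g_{ij}$, $\zeta \in \sigma^g_{jk}$ with witnesses $x_\xi, x_\zeta \in S$; applying (R4) to ${}^g T_{ij}(x_\xi \xi)$ and ${}^g T_{jk}(x_\zeta \theta \zeta)$ yields ${}^g T_{ik}(x_\xi x_\zeta \theta \xi \zeta) \in H$ for every $\theta \in R'$, so the combined witness $x_\xi x_\zeta$ places $\xi\zeta$ in $\sigma^g_{ik}$; the degenerate sub-cases $i = \pm k$ are automatic from the definitions of $\sigma^g_{ii}$ and $\sigma^g_{i,-i}$. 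Part (4) is a one-step application of (R5), which produces a pure long root element ${}^g T_{i,-i}(2 x_\xi^2 x_\zeta^2 \theta^2 \xi\zeta) \in H$ and hence puts $2\xi\zeta$ in $\Gamma^g_i$.

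The principal difficulty is (2). Feeding ${}^g T_{i,-i}(x_\alpha^2 \alpha)$ and ${}^g T_{-i,k}(x_\xi \theta \xi)$ into (R6) produces the desired short root element multiplied by an \emph{unwanted} long root element of the form ${}^g T_{-k,k}(\cdots \alpha \xi^2)$. If $k \sim -k$, the hypothesis $[\nu]_R \le (\sigma^g,\Gamma^g)$ gives $\Gamma^g_{-k} = R$ and this error term lies in $H$ automatically, so it can be cancelled. Otherwise I would use $h(\nu) \geq (4,3)$ to select an index $l \sim i$ with $l \neq \pm i, \pm k$; taking nested commutators of the output with ${}^g T_{li}$ and ${}^g T_{il}$ kills the $T_{-k,k}$-error via (R3) (which commutes past both factors, as $l \neq \pm i, \pm k$) while two successive applications of (R4) reproduce a pure ${}^g T_{ik}$ in $H$ with a fresh combined witness. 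This is exactly the place where the assumption $h(\nu) \geq (4,3)$ is actually used.

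For (3), I would compute the commutator $[{}^g T_{ij}(x_\xi \theta \xi), {}^g T_{j,-j}(x_\alpha^2 \alpha)]$ and observe that via the Steinberg relations it equals a product
\[
    {}^g T_{i,-j}(x_\xi x_\alpha^2 \theta \xi\alpha) \cdot {}^g T_{i,-i}(x_\xi^2 x_\alpha^2 \theta^2 \xi^2 \alpha) \in H.
\]
By part (2), already established, the inclusion $\sigma^g_{ij}\Gamma^g_j \le \sigma^g_{i,-j}$ ensures that the first factor lies in $H$ once $\theta$ is chosen to absorb a suitable element of $S$; consequently the second factor must also lie in $H$, which is precisely the desired $\xi^2 \alpha \in \Gamma^g_i$. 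The main obstacle of the whole proposition is therefore (2); once the clean-up trick with the auxiliary index $l$ is in place, assertions (1), (3), (4) follow by routine commutator calculus from the Steinberg relations.
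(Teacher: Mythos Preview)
Your proposal is correct and follows essentially the same route as the paper: parts (1) and (4) via direct application of (R4) and (R5); part (2) via (R6) followed by the clean-up trick with an auxiliary index $l\sim i$, $l\neq \pm i,\pm k$ (when $k\nsim -k$) to strip off the spurious long factor; and part (3) by invoking the already-proved part (2) to remove the $T_{i,-j}$ factor from the commutator $[{}^gT_{ij},{}^gT_{j,-j}]$. The order, the choice of witnesses, and the identification of (2) as the only nontrivial step all match the paper's argument.
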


\begin{lemma}
    \label{lemma:spgen:delocalization}
    Assume $h(\nu) \geq (4,3)$. Let $(\sigma', \Gamma')$ be an exact major form net 
    of ideals over $R'$, which is $S$-associated
    with $H$. Let $(\sigma, \Gamma)$ denote the $S$-closure of $(\sigma', \Gamma')$ in $R$. 
    Then for every $g \in \Ep(\sigma, \Gamma)$ the coordinate-wise equality 
    \begin{equation}
        \label{eq:lemma:spgen:delocalization:statement:1}
        (\sigma, \Gamma) = (\sigma^g, \Gamma^g)
    \end{equation}            
    holds.
    In particular, each such $(\sigma^g, \Gamma^g)$ is an exact major form net of ideals over $R$.
    \begin{proof}
        We will prove this lemma by induction on the word length 
        $L(g)$ of $g$ in terms of the
        generators of $\Ep(\sigma, \Gamma)$. Proposition
        \ref{prop:spgen:localized:nets:are:nets:case:g=e}
        serves as a base of induction, namely it shows that when $L(g) = 0$
        and $g = e$ we have the equality $(\sigma, \Gamma) = (\sigma^e,
        \Gamma^e)$.

        Before proving the induction step, we will prove a slightly stronger statement. Namely,
        assume $g \in \Ep(\sigma, \Gamma)$ such that $(\sigma, \Gamma) \le (\sigma^g, \Gamma^g)$.
        Fix an element $T_{pq}(\zeta) \in \Ep(\sigma, \Gamma)$. We will show that 
        $(\sigma^g, \Gamma^g) \le (\sigma^{g T_{pq}(\zeta)}, \Gamma^{g T_{pq}(\zeta)})$.
        Note that, as $(\sigma, \Gamma) \le (\sigma^g, \Gamma^g)$, it follows that 
        $\zeta \in (\sigma^g, \Gamma^g)_{pq}$.
        Fix any $\xi \in (\sigma^g, \Gamma^g)_{sr}$ for some indices $s \neq r$. Then there exists
        an element $x_\xi \in S$ such that for every $\theta \in R'$ the inclusion
        ${}^g T_{sr}(x_\xi^\kappa \theta^\kappa \xi) \in H$ holds, where $\kappa = 1 + \delta_{s,-r}$.
        For any $x \in S$ we have the equality
        \begin{equation}
            \label{eq:lemma:spgen:delocalization:statement:commutator}
            {}^{g T_{pq}(\zeta)} T_{sr}(x^\kappa \theta^\kappa \xi) =
            {}^g [T_{pq}(\zeta), T_{sr}(x^\kappa \theta^\kappa \xi)] \cdot 
            {}^g T_{sr}(x^\kappa \theta^\kappa \xi).
        \end{equation}
        Below we will construct an element $x_0$ such that after the
        substitution $x = x_0$ the right-hand side of \eqref{eq:lemma:spgen:delocalization:statement:commutator}
        is contained in $H$ for all $\theta \in R'$. It will follow that
        $\xi \in (\sigma^{g T_{pq}(\zeta)}, \Gamma^{g T_{pq}(\zeta)})_{sr}$.
        
        Clearly the second term of the right-hand side of 
        \eqref{eq:lemma:spgen:delocalization:statement:commutator} is contained in 
        $H$ whenever $x$ is a multiple of $x_\xi$. The first term, which we 
        will denote by $h = h(\theta)$, requires a more detailed investigation.
        First, assume that the transvections $T_{sr}(*)$ and $T_{pq}(*)$
        commute. In this case, $h = e$ and thus we can put $x_0 = x_\xi$.
        Assume that $h \neq e$. The following six alternatives 
        exhaust all possibilities:
        \begin{enumerate}[(1)]
            \item $s \neq \pm r, p \neq \pm q$ and one of the following
            	holds
                \begin{enumerate}[(i)]
                    \item $s = q, r \neq \pm p$
                    \item $r = -q, s \neq \pm p$
                    \item $s = -p, r \neq \pm q$ 
                    \item $-r = -p, s \neq \pm q$.
                \end{enumerate}
                Then $h$ is a single short transvection. We will prove
                only the case (i). The other ones can be treated
                similarly. By the Steinberg relation (R4), 
                $h = {}^g T_{pr}(x \theta \zeta \xi)$. Recall that 
                \[
                    [\nu]_R \le (\sigma, \Gamma) \le (\sigma^g, \Gamma^g).
                \]
                By Proposition \ref{prop:spgen:sigma:g:is:almost:a:net} we get
                $\zeta \xi \in \sigma^g_{pr}$. Therefore there exists an element $x_\zeta \in S$ such that
                $h$ is contained in $H$ whenever $x$ is a multiple of $x_\zeta$. Put $x_0 = x_\xi x_\zeta$.
			\item $p \neq \pm q, s \neq \pm r$ and one of the following
                holds:
                \begin{enumerate}[(i)]
                    \item $s = q, r = -p$
                    \item $s = -p, r = q$
                    \item $s = p, r = -q$
                    \item $s = -q, r = p$.
                \end{enumerate}
                In this case we can compute $h$ using the Steinberg relation
                (R5). Again, we will prove only the case (i). As
				$\zeta \in \sigma^g_{pq}$, there exists an element $x_\zeta \in S$
				such that ${}^g T_{pq}(x_\zeta \zeta) \in H$. Then $h \in H$ for all 
				$\theta \in R'$ and $x_0 = x_\xi x_\zeta$. Indeed,
				\[
					h = [{}^g T_{pq}(\zeta), 
						{}^g T_{q,-p}(x_\zeta x_\xi \theta \xi)]
					  = {}^g T_{p,-p}(2 \zeta x_\zeta x_\xi \theta \xi)
					  = [{}^g T_{pq}(\zeta x_\zeta), 
						{}^g T_{q,-p}(x_\xi \theta \xi)] \in H
				\]
				for every $\theta \in R'$ due to the choice of $x_\xi$ and
				$x_\zeta$.
			\item $q = -p, s \neq \pm r$ and either $s = -p$ or $r = p$. 
				In both cases $h$ is a product of a long and a short 
				symplectic
				elementary transvection. We will consider only the 
				first option. By relation (EU6),
				\begin{equation}
         		   \label{eq:lemma:spgen:delocalization:h:long:short}
					h = [{}^g T_{p,-p}(\zeta), {}^g T_{-p,r}(x \theta \xi)]
					  = {}^g T_{pr}(x \theta \xi \zeta)
					    {}^g T_{-r,r}(\pm x^2 \theta^2 \xi^2 \zeta).
				\end{equation}
				By Proposition \ref{prop:spgen:sigma:g:is:almost:a:net}
				it follows that
				$\xi \zeta \in \Gamma_p^g \sigma_{-p,r} \le \sigma_{pr}^g$
				and $\xi^2 \zeta \in (\sigma^g_{-p,r})^\rectangled{2} \Gamma_p^g
				\le \Gamma_{-r}^g$. Therefore there exist elements $x_{\xi\zeta},
				x_{\xi^2\zeta} \in S$ such that the first term of the right-hand side
				of \eqref{eq:lemma:spgen:delocalization:h:long:short} 
				belongs to $H$ whenever $x$ is a multiple
				of $x_{\xi\zeta}$ and the second term whenever $x$ is a 
				multiple of $x_{\xi^2\zeta}$. Put
				$x_0 = x_\xi x_{\xi\zeta} x_{\xi^2\zeta}$.
				
			\item $p \neq \pm q, r = -s$ and either $s = q$ or $s = -p$. Then
				 $h$ is a product a long and a short transvection. 
				 We prove only the first option, $s = q$. By the Steinberg relations (R1) and (R6) we have
				\begin{equation}
         		   \label{eq:lemma:spgen:delocalization:h:short:long}
					h = [{}^g T_{pq}(\zeta), {}^g T_{q,-q}(x^2 \theta^2 \xi)]
					  = {}^g T_{p,-q}(\pm x^2 \theta^2 \xi \zeta)
					    {}^g T_{p,-p}(\pm x^2 \theta^2 \zeta^2 \xi).
				\end{equation}
				As before, 	by Proposition
				 \ref{prop:spgen:sigma:g:is:almost:a:net} we have 
				$\xi \zeta \in \sigma_{pq}^g \Gamma_q^g \le \sigma_{p,-q}^g$
				and $\zeta^2 \xi \in (\sigma^g_{pq})^\rectangled{2} \Gamma_q^g
				\le \Gamma_{p}^g$. Therefore there exist elements $x_{\xi\zeta},
				x_{\zeta^2 \xi} \in S$ such that the right-hand side of
				\eqref{eq:lemma:spgen:delocalization:h:short:long} is contained in $H$
				whenever $x$ is a multiple of $x_{\xi\zeta} x_{\zeta^2 \xi}$.
				Put $x_0 = x_\xi x_{\xi\zeta} x_{\zeta^2 \xi}$.
			\item Either $s = q, r = p$ or $s= -p, r = -q$. 
				\Wlg we can assume the former.
				In this case, we can't apply any of the Steinberg relations
				directly, but we can first decompose $T_{sr}(*)$ as a
				product of transvections for which we know the commutators
				with $T_{pq}(*)$. As $h(\nu) \geq (4,3)$
				there exists either another index $h \sim p$ such that 
				$h \neq \pm p,\pm q$, or $p \sim -p$. In the first case,
				\begin{equation}
		    		\label{eq:lemma:spgen:delocalization:h:short:opposite:s}
					\begin{aligned}
						{}^{g T_{pq}(\zeta)} T_{qp}(x \theta \xi) &=
						{}^{g T_{pq}(\zeta)} [T_{qh}(y \theta \xi), T_{hp}(z)]\\
						&= 
				  		{}^g [[T_{pq}(\zeta), T_{qh}(y \theta \xi)] 
					  	T_{qh}(y \theta \xi), 
					  	[T_{pq}(\zeta), T_{hp}(z)] T_{hp}(z)] \\
					  	&=
					  	[{}^g T_{ph}(y \theta \zeta \xi) \cdot {}^g 
					  	T_{qh}(y \theta \xi),
					  	{}^g T_{hq}(- z \zeta) \cdot {}^g T_{hp}(z)]
					\end{aligned}
				\end{equation}	
				whenever $x = yz$. Observe that
				$\xi \in \sigma^g_{qp} \le \sigma^g_{qp} R = \sigma^g_{qp}
				\sigma^g_{ph} \le \sigma^g_{qh}$, $\xi \zeta \in
				\sigma^g_{pq} \sigma^g_{qh} \le \sigma^g_{ph}$,
				$\zeta \in \sigma^g_{pq} \le R \sigma^g_{pq} = 
				\sigma^g_{hp} \sigma^g_{pq} \le \sigma^g_{hq}$
				and $1 \in \sigma^g_{hp}$. 	
				Thus we can choose $y$ and $z$ in $S$ such that all 
				four terms of the right-hand side of
				\eqref{eq:lemma:spgen:delocalization:h:short:opposite:s}
				are contained in $H$ for all $\theta \in R'$. Then
				we can put $x_0 = yz x_\xi$.
				
				If the equivalence class of $p$ equals $\{\pm p, \pm q \}$ then
				we can decompose $T_{sr}(*)$ in a different way, using
				long transvections. Namely,
				\begin{equation}
		    		\label{eq:lemma:spgen:delocalization:h:short:opposite:l}
					\begin{aligned}
						{}^{g T_{pq}(\zeta)} T_{qp}(x \theta \xi) &=
						{}^{g T_{pq}(\zeta)} 
						\left( [T_{q,-q}(y^2), T_{-q,p}(z \theta \xi)]
						T_{-p,p}(\pm z^2 \theta^2 \xi^2 y)
						\right) = \\
						&= {}^g \left( [T_{p,-q}(\zeta y^2) 
						T_{p,-p}(\pm \zeta^2 y^2) T_{q,-q}(y^2),\right. \\
						&\quad \left.
						T_{-q,q}(-2 \zeta z \theta \xi) T_{-q,p}(z \theta \xi)]
						T_{q,-p}(\pm \zeta z^2 \theta^2 \xi^2 y^2)
						\right. 
						\\
						&\quad \left.
						T_{q,-q}(\pm \zeta^2 z^2 \theta^2 \xi^2 y^2)
						T_{-p,p}(\pm z^2 \theta^2 \xi^2 y^2)
						\right),
					\end{aligned}
				\end{equation}	
				whenever $x = y^2 z$. Using the previous cases, we can
				choose $y$ and $z$ such that the right-hand side
				of \eqref{eq:lemma:spgen:delocalization:h:short:opposite:l}
				is contained in $H$ for all $\theta \in R'$. Put $x_0 = y^2 z$.
			\item $q = s = -p, r = p$. Then there exists
				an index $h \sim p$ such that $h \neq \pm p$. Then
				\begin{equation}
		    		\label{eq:lemma:spgen:delocalization:h:long:opposite}
					\begin{aligned}
						{}^{g T_{p,-p}(\zeta)} T_{-p,p}(x^2 \theta^2 \xi) &=
						{}^{g T_{p,-p}(\zeta)} 
						\left( [T_{-h,h}(y^2 \theta^2 \xi), 
						T_{h,p}(\pm  z)]
						\times \right. \\
						&\quad \left.
						T_{-h,p}(\pm  y^2 z \theta^2 \xi)
						\right) \\
						&=
						{}^g 
						T_{-h,-p}(y^2 z \theta^2 \zeta \xi) \cdot
						{}^g T_{-h,h}(y^4 z^2 \theta^2 \xi^2 \zeta) \cdot \\
						&\quad 
						{}^g T_{-h,p}(\pm y^2 z \theta^2 \xi),
					\end{aligned}
				\end{equation}
				whenever $x = y^2 z$. Observe that 
				\begin{align*}
					\xi &\in \Gamma^g_{-p} \le R \Gamma^g_{-p} = \sigma^g_{-h,-p}
						\Gamma^g_{-p} \le \sigma^g_{-h,p} \\
				    \xi \zeta &\in \sigma^g_{-h,p} \Gamma_p^g \le 
				    	\sigma^g_{-h,-p} \\
				    \xi^2 \zeta &\in (\sigma^g_{-h,p})^\rectangled{2} \; \Gamma^g_p
					    \le \Gamma_{-h}^g.
				\end{align*}
				Hence we can choose elements $y$ and $z$ in $S$ such that every term
				of the right-hand side of 
				\eqref{eq:lemma:spgen:delocalization:h:long:opposite} is contained
				in $H$ for all $\theta \in R'$. Put $x_0 = y^2 z$.
        \end{enumerate}
        
        The alternatives above are exhaustive.
		Therefore $(\sigma^g, \Gamma^g)_{sr} \le (\sigma^{g T_{pq}(\zeta)},
		\Gamma^{g T_{pq}(\zeta)})_{sr}$ for all $s \neq r \in I$. The
		inclusions $\sigma_{ii}^{g T_{pq}(\zeta)} \le \sigma^g_{ii}$ and
		$\sigma_{i,-i}^{g T_{pq}(\zeta)} \le \sigma^g_{i,-i}$ follow
		easily from the definition of $(\sigma^g, \Gamma^g)$. Therefore we have proved that
		$(\sigma^g, \Gamma^g) \le (\sigma^{g T_{pq}(\zeta)},
		\Gamma^{g T_{pq}(\zeta)})$ coordinate-wise, whenever 
		$(\sigma, \Gamma) \le (\sigma^g, \Gamma^g)$.

		The induction step looks as follows.
        Assume that for all elements $g \in \Ep(\sigma, \Gamma)$ such that 
        $L(g) \leq L_0$, the equality 
        \eqref{eq:lemma:spgen:delocalization:statement:1} 
        holds. Let $T_{pq}(\zeta)$ be an elementary transvection 
        in $\Ep(\sigma, \Gamma)$ such that $L(g \cdot T_{pq}(\zeta)) = L_0 + 1$. 
		Then, as we have proved above, 
		$(\sigma^g, \Gamma^g) \le 
		(\sigma^{g T_{pq}(\zeta)}, \Gamma^{g T_{pq}(\zeta)})$, in particular
		$(\sigma, \Gamma) \le (\sigma^{g T_{pq}(\zeta)}, 
		\Gamma^{g T_{pq}(\zeta)})$. 
		For the same reason
		\[
			(\sigma^{g T_{pq}(\zeta)}, \Gamma^{g T_{pq}(\zeta)}) \le
			(\sigma^{g T_{pq}(\zeta) T_{pq}(-\zeta)}, \Gamma^{g T_{pq}(\zeta) 
			T_{pq}(-\zeta)}) = (\sigma^g, \Gamma^g).
		\]
		Summing up, by induction we get the required equality
		\eqref{eq:lemma:spgen:delocalization:statement:1} for all 
		$g \in \Ep(\sigma, \Gamma)$.
    \end{proof}
\end{lemma}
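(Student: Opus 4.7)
The plan is to prove Lemma \ref{lemma:spgen:delocalization} by induction on the word length $L(g)$ of $g$ as a product of the generating transvections of $\Ep(\sigma, \Gamma)$. The base case $L(g) = 0$, i.e.\ $g = e$, is exactly Proposition \ref{prop:spgen:localized:nets:are:nets:case:g=e}. For the inductive step, given an elementary transvection $T_{pq}(\zeta) \in \Ep(\sigma, \Gamma)$ and an element $g$ for which the coordinate-wise equality $(\sigma, \Gamma) = (\sigma^g, \Gamma^g)$ is already known, I would show that $(\sigma^g, \Gamma^g) \le (\sigma^{g T_{pq}(\zeta)}, \Gamma^{g T_{pq}(\zeta)})$. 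Combined with the symmetric application to $T_{pq}(\zeta)^{-1} = T_{pq}(-\zeta)$, this forces equality and closes the induction.

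To establish the displayed one-sided inclusion I would exploit the factorisation
\[
    {}^{g T_{pq}(\zeta)} T_{sr}(\eta) \;=\; {}^g[T_{pq}(\zeta), T_{sr}(\eta)] \cdot {}^g T_{sr}(\eta).
\]
For $\xi \in (\sigma^g, \Gamma^g)_{sr}$, let $x_\xi \in S$ be a witness and set $\eta = x_\xi^{\kappa} \theta^{\kappa} \xi$ with $\kappa = 1 + \delta_{s,-r}$. The rightmost factor lies in $H$ for every $\theta \in R'$ by the choice of $x_\xi$; my task is to manufacture a further element $x_0 \in S$ such that, after replacing $x_\xi$ by the product $x_0 x_\xi$, the commutator factor also lies in $H$ for every $\theta \in R'$. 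The commutator $h := {}^g[T_{pq}(\zeta),T_{sr}(\eta)]$ is then computed via the Steinberg relations (R3)--(R6), and its form depends on how the pairs $(p,q)$ and $(s,r)$ overlap.

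I would dispatch the configurations in roughly this order. If the two transvections commute, $h = e$ and nothing is needed. When the commutator collapses by (R4) or (R5) to a single transvection, its coefficient is a product $\xi\zeta$ or $2\xi\zeta$ that, by hypothesis $(\sigma,\Gamma) = (\sigma^g, \Gamma^g)$ and Proposition \ref{prop:spgen:sigma:g:is:almost:a:net}, lies in the appropriate entry of $(\sigma^g, \Gamma^g)$; hence the very definition of $\sigma^g_{\bullet\bullet}$ produces the required extra factor in $S$. The mixed short--long cases using (R6) yield a product of two transvections whose coefficients are $\xi\zeta$ and $\xi^2\zeta$ (or analogues); again Proposition \ref{prop:spgen:sigma:g:is:almost:a:net}(2)--(3) places these in the right entries.

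The main obstacle is the configurations where no Steinberg relation directly rewrites the commutator as a known transvection product, namely when $\{s,r\}$ is a signed rearrangement of $\{p,q\}$ (the cases $s = q,\ r = p$ and their variants, and $q = s = -p,\ r = p$). Here I would first decompose $T_{sr}(\eta)$ as a commutator $[T_{sh}(\cdot), T_{hr}(\cdot)]$ through an auxiliary index $h$; the assumption $h(\nu) \ge (4,3)$ guarantees such an $h$ exists either in the equivalence class of $p$ with $h \ne \pm p, \pm q$, or via a long transvection decomposition using (R6) when $\nu(p) = \{\pm p, \pm q\}$. Applying the already-handled cases to each factor of the decomposition and tracking the accumulating witnesses from $S$ gives an $x_0$ doing the job. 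The bookkeeping in this last step is the technical heart of the argument and accounts for the bulk of the proof's length.
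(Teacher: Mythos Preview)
Your outline follows the paper's proof almost verbatim: induction on word length, base case via Proposition~\ref{prop:spgen:localized:nets:are:nets:case:g=e}, the factorisation ${}^{gT_{pq}(\zeta)}T_{sr}(\eta) = {}^g[T_{pq}(\zeta),T_{sr}(\eta)]\cdot {}^gT_{sr}(\eta)$, the case split according to Steinberg relations (R3)--(R6), and the decomposition through an auxiliary index $h$ (available because $h(\nu)\ge(4,3)$) for the ``opposite'' configurations.

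There is one small logical wrinkle. You state the intermediate claim under the hypothesis that the \emph{equality} $(\sigma,\Gamma)=(\sigma^g,\Gamma^g)$ holds, and then want to apply it ``symmetrically'' to $g' = gT_{pq}(\zeta)$ with $T_{pq}(-\zeta)$ to get the reverse inclusion. But at that point you only know the \emph{inclusion} $(\sigma,\Gamma)\le(\sigma^{g'},\Gamma^{g'})$, not equality, so your intermediate claim as stated does not apply to $g'$. The paper avoids this by proving the intermediate claim under the weaker hypothesis $(\sigma,\Gamma)\le(\sigma^g,\Gamma^g)$; your argument for the claim actually only uses this inclusion (it feeds $[\nu]_R\le(\sigma^g,\Gamma^g)$ into Proposition~\ref{prop:spgen:sigma:g:is:almost:a:net} and uses $\zeta\in(\sigma^g,\Gamma^g)_{pq}$), so the fix is just to weaken the stated hypothesis accordingly.
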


We will also use the lemma above in the form of the following obvious corollary. It represents the concept
of a common denominator for a finite family of fractions.
\begin{corollary}
    \label{cor:spgen:common:denominator}
    Assume $h(\nu) \geq (4,3)$. Let $(\sigma', \Gamma')$ be an exact major form net of ideals
    which is $S$-associated with the subgroup $H \ge \Ep(\nu, R)$ and let $(\sigma, \Gamma)$ be the $S$-closure
    of $(\sigma', \Gamma')$ in $R$. Then for any finite family
    $\{ T_{s_i,r_i}(\xi_i) \}_{i \in L}$ of $(\sigma,\Gamma)$-elementary
    transvections and any finite family $\{ g_i \}_{i \in K}$
    of elements of $\Ep(\sigma, \Gamma)$ there exists an element $x \in S$ such that
    \[
        {}^{g_i} T_{s_j,r_j}((x \theta)^{(1 + \delta_{s_j, -r_j})} \xi_j) \in H
    \] for all $i \in K, j \in J$ and $\theta \in R'$.
\end{corollary}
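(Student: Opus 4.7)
The plan is to derive the corollary as a ``common denominator'' consequence of Lemma~\ref{lemma:spgen:delocalization}, which already supplies all the hard work. The essential input is that, by that lemma, for every $g_i$ in the given finite family we have the coordinate-wise equality $(\sigma, \Gamma) = (\sigma^{g_i}, \Gamma^{g_i})$. Consequently each $\xi_j$, which by assumption lies in $(\sigma, \Gamma)_{s_j, r_j}$, also lies in $(\sigma^{g_i}, \Gamma^{g_i})_{s_j, r_j}$ for every $i \in K$.

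Unfolding the definition of $(\sigma^{g_i}, \Gamma^{g_i})_{s_j, r_j}$ then immediately produces, for each pair $(i,j) \in K \times L$, an element $x_{ij} \in S$ such that
\[
    {}^{g_i} T_{s_j,r_j}\bigl((x_{ij}\theta)^{(1 + \delta_{s_j, -r_j})} \xi_j\bigr) \in H
    \quad\text{for every } \theta \in R'.
\]
I would then set $x := \prod_{(i,j)\in K\times L} x_{ij}$, which still lies in $S$ (or in its multiplicative closure inside $R' \cap R^*$, which again yields a standard setting without altering any of the relevant notions). For a fixed pair $(i,j)$ I decompose $x = x_{ij}\cdot y_{ij}$, where $y_{ij}$ is the product of the remaining $x_{i'j'}$'s and hence lies in $R'$. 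For any $\theta \in R'$ the product $y_{ij}\theta$ then lies in $R'$ as well, and
\[
    (x\theta)^{(1+\delta_{s_j,-r_j})} \xi_j = \bigl(x_{ij}\cdot (y_{ij}\theta)\bigr)^{(1+\delta_{s_j,-r_j})} \xi_j,
\]
so the desired inclusion for the pair $(i,j)$ follows by applying the defining property of $x_{ij}$ with the test parameter $y_{ij}\theta$ in place of $\theta$.

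No genuine obstacle is expected: once Lemma~\ref{lemma:spgen:delocalization} is in hand, the corollary is a short bookkeeping step. The one point that really matters, and that makes the ``common denominator'' argument succeed, is the universal quantification over $\theta \in R'$ built into the very definitions of $\sigma^g$ and $\Gamma^g$. It is precisely this quantification that allows the excess factors $y_{ij}$ to be absorbed into the test parameter without disturbing any of the memberships in $H$, and so lets a single $x \in S$ serve for the whole finite family simultaneously.
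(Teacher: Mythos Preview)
Your argument is correct and matches the paper's intended route: the paper states this as an ``obvious corollary'' of Lemma~\ref{lemma:spgen:delocalization} without giving a proof, and your writeup is exactly the common-denominator bookkeeping it has in mind. Your parenthetical about the multiplicative closure of $S$ is apt; the paper itself freely takes products of elements of $S$ (e.g.\ setting $x_0 = x_\xi x_\zeta$ in the proof of Lemma~\ref{lemma:spgen:delocalization}), so it is working under the same implicit assumption.
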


\paragraph{Examples of standard settings}\ \\
Before we continue with studying the properties of $S$-associated form nets of ideals and their closures,
let's consider two main examples of a standard setting. In fact, exactly the opportunity to treat these
two cases uniformly was the main motivation to consider this concept at the first place.

1. \textbf{Trivial standard setting}. Let $\tilde{R}$ be a commutative ring. Let
$R = R' = \tilde{R}$ and $S = \{1\}$. Clearly, $(R,R',S)$ is a standard setting. In this
case the concepts of the form net of ideals, associated with a subgroup $H$, 
a form net of ideals $S$-associated with $H$ and the closure of such coincide.

2. \textbf{Local standard setting}. Let $R$ be a commutative ring
and $\m$ a maximal ideal of $R$. Let $S$ denote the compliment $R \setminus \m$,
$R_\m$ the localization $S^{-1} R$ and $F_\m$ the localization morphism
$R \rightarrow R_\m$. Finally, let $R'_\m = F_\m(R)$ and $S_\m = F_\m(S)$. 
Then $(R_\m, R'_\m,S_\m)$ is a standard setting. Given a subgroup $H$ of $\Sp(2n, R)$ 
containing $\Ep(\nu, R)$ with $h(\nu) \geq (4,5)$ and the form net of ideals
$(\sigma, \Gamma)$ associated with $H$ we can consider the coordinate-wise
image $(\sigma'_\m, \Gamma'_\m)$ of $(\sigma, \Gamma)$, i.e.
\[ \sigma'_{\m,ij} = F_\m(\sigma_{ij}) \text{ and } \Gamma'_{\m,i} = F_\m(\Gamma_i) \text{ for any } i, j \in I.\]
It's easy to see that $(\sigma'_\m, \Gamma'_\m)$ is an exact form net of ideals over $R'_\m$.
We will show in Section \ref{sec:spgen:localization} that if $R$ is Noetherian 
then $(\sigma'_\m, \Gamma'_\m)$
is $S_\m$-associated with $H$.

\section{Extraction of transvections}
\label{sec:spgen:extract}

In this section we perform the extraction of transvections first using matrices in small
parabolic subgroups and then using long and short root elements. The methods of this section
are rather standard and mostly mimic those of \cite{VavSymplComm1993}, \cite{VavSympl},
\cite{VavOrthogSMZ88} and \cite{VavOrthog}. Nontheless, we can't simply refer to the similar
results of the cited papers. For our purpuses we have to keep track of denominators
while working in a localization of the ground ring. For this sake it is convenient to use the concept 
of a standard setting.

Throughout this section we fix a standard setting $(R, R', S)$, a unitary equivalence 
relation $\nu$, a subgroup $H$ of $\Sp(2n, R)$ and an exact major form net of ideals $(\sigma', \Gamma')$
which is $S$-associated with $H$. Let $(\sigma, \Gamma)$ denote the $S$-closure of $(\sigma', \Gamma')$ in $R$.

\paragraph{Extraction of transvections in parabolic subgroups}

\begin{lemma}
    \label{lemma:spgen:delta:outside:one:row}
    Let $a$ be a matrix in $\Sp(2n,R)$
    such that for some index $p \in I$ the following conditions hold:
    \begin{enumerate}
        \item $a_{pp} = a_{-p,-p} = 1$
        \item $a_{ij} = \delta_{ij}$ whenever $i \neq -p$ and $j \neq p$.
    \end{enumerate}
    Then
    \begin{equation}
        \label{eq:lemma:spgen:delta:outside:one:row:decomp}
        a = \left( \prod_{1 \le j \neq \pm p \le n} T_{-p,j}(a_{-p,j}) T_{-p,-j}(a_{-p,-j})
        \right) T_{-p,p}(S_{-p,p}(a)).
    \end{equation}
    Further, suppose $h(\nu) \geq (4,3)$ and there exists an element $g \in \Ep(\sigma,\Gamma)$ such that ${}^g a \in H$. 
    Then $a \in \Ep(\sigma, \Gamma)$.
    \begin{proof}
        The decomposition \eqref{eq:lemma:spgen:delta:outside:one:row:decomp} can
        be checked by a straightforward calculation. Suppose $j \neq \pm p$.
        If $j \sim -p$ then the inclusion $a_{-p,j} \in \sigma_{-p,j}$
        is trivial as $\sigma$ is major. From now on, assume 
        $j \nsim -p$. As $h(\nu) \geq (4,3)$, we can choose an index $k \sim j$ such that
        $k \neq \pm j, \pm p$.        
        Choose using Corollary \ref{cor:spgen:common:denominator} an element $x_1 \in S$
        such that ${}^g T_{jk}(x_1) \in H$. Then
        \[
            X = {}^g ( T_{-p,k}(a_{-p,j} x_1) T_{-p,-j}(\pm a_{-p,-k} x_1) ) = {}^g [a, T_{jk}(x_1)] \in H.
        \]
        If the equivalence class of $j$ is non-self-conjugate then either $j \sim p$ or 
        there exists another
        index $h \sim j$ such that $h \neq \pm k, \pm j, \pm p$. In the former case,
        $\pm a_{-p,-k} x_1 \in \sigma^g_{-p,-j} = R$ and by Corollary \ref{cor:spgen:common:denominator}
        the element $x_1$ can be chosen such that ${}^g T_{-p,-j}(\pm a_{-p,-k} x_1) ) \in H$
        and thus also ${}^g T_{-p,k}(a_{-p,j} x_1) \in H$. It follows that 
        $a_{-p,j} \in \sigma^g_{-p,k} = \sigma^g_{-p,j}$. In the latter case
        choose using Corollary \ref{cor:spgen:common:denominator} an element $x_2 \in S$ such that 
        ${}^g T_{kh}(x_2), {}^g T_{hk}(x_2 \theta) \in H$ for any $\theta \in R'$.
        Then for the same $\theta$ we get
        \[
            T_{-p,j}(x_1 x_2^2 \theta a_{-p,j}) = [[X, T_{kh}(x_2)], T_{hk}(x_2 \theta)] \in H.
        \]
        If the equivalent class of $j$ is self-conjugate, it contains at least the elements
        $\pm j, \pm k$. Pick using Corollary \ref{cor:spgen:common:denominator} 
        an element $x_2 \in S$ such that
        ${}^g T_{k,-j}(x_2), {}^g T_{-j,k}(x_2), {}^g T_{kj}(x_2 \theta) \in H$
        for all $\theta \in R'$. Then      
        \[     
            {}^g T_{-p,j}(x_1 x_2^3 \theta a_{-p,j}) = 
                [[[X, {}^g T_{k,-j}(x_2)], {}^g T_{-j,k}(x_2)], {}^g T_{kj}(x_2 \theta)] \in H.
        \]
        Therefore $a_{-p,j} \in \sigma^g_{-p,j}$ and by Lemma
        \ref{lemma:spgen:delocalization} $a_{-p,j} \in \sigma_{-p,j}$ for all $j \neq \pm p$.

        In order to prove that $a \in \EU(\sigma, \Gamma)$ it only remains to show that 
        $T_{-p,p}(S_{-p,p}(a)) \in \Ep(\sigma, \Gamma)$. If $-p \sim p$ then $\Gamma_{-p} = R$ and
		the inclusion $T_{-p,p}(S_{-p,p}(a)) \in \Ep(\sigma, \Gamma)$ is trivial. Assume $p \nsim -p$. Set 
		\begin{align*}
			g_1 &= g \prod_{j>0 , j \neq \pm p} T_{-p,j}(a_{-p,j}) T_{-p,-j}(a_{-p,-j}).
		\end{align*}
		Then $g_1 T_{-p,p}(S_{-p,p}(a)) g^{-1} = {}^g a \in H$ and $g_1, g^{-1} \in 
		\EU(\sigma, \Gamma)$. 
		As $p \nsim -p$, we can choose two more indices
		$q$ and $t$ such that $(p,q,t)$ is an $A$-type base triple.
		Pick an element $y_1 \in S$ such that ${}^g T_{pq}(y \theta), {}^{g_1} T_{pq}(y \theta)
		\in H$ for all $\theta \in R'$
		whenever $y$ is a multiple of $y_1$. By the Steinberg relation (R6) we have
		\begin{equation}
   	    \label{eq:lemma:spgen:delta:outside:one:row:S}
		\begin{aligned}
			{}^{g_1} T_{-p,q}(y \theta S_{-p,p}(a)) \cdot
			{}^{g_1} T_{-q,q}(- \varepsilon_p \varepsilon_q y^2 \theta^2
				S_{-p,p}(a)) \cdot
			{}^{g_1} T_{pq}(y \theta)
			\\ 
			= {}^{g_1} [T_{-p,p}(S_{-p,p}(a)), T_{pq}(y \theta)] \cdot
			{}^{g_1} T_{pq}(y \theta) \\
			= \left (g_1 T_{-p,p}(S_{-p,p}(a)) g^{-1} \right) 
			\left( g T_{pq}(y \theta) g^{-1} \right)
			\left (g T_{-p,p}(-S_{-p,p}(a)) g_1^{-1} \right).
		\end{aligned}
		\end{equation}
		The right-hand side of \eqref{eq:lemma:spgen:delta:outside:one:row:S} as well as the third 
		term of the left-hand side of \eqref{eq:lemma:spgen:delta:outside:one:row:S}
		is contained in $H$ whenever $y$ is a multiple of $y_1$ in $S$. Therefore
		\begin{equation}
		\label{eq:lemma:spgen:delta:outside:one:row:S:two}
			{}^{g_1} T_{-p,q}(y \theta S_{-p,p}(a)) \cdot
			{}^{g_1} T_{-q,q}(- \varepsilon_p \varepsilon_q y^2 \theta^2
				S_{-p,p}(a)) \in H
		\end{equation}
		for all $\theta \in S$ whenever $y$ is a multiple of $y_1$. Pick 
		$y_2 \in S$ such that ${}^{g_1} T_{-p,-t}(y_2)$, ${}^{g_1} T_{-t,-p}(y_2) \in H$.
		We get
		\begin{align*}
			{}^{g_1} T_{-p,q}(y y_2^2 \theta S_{-p,p}(a)) = 
			[{}^{g_1} T_{-p,-t}(y_2), [{}^{g_1} T_{-t,-p}(y_2),
			{}^{g_1} T_{-p,q}(y \theta S_{-p,p}(a)) \cdot \\
			{}^{g_1} T_{-q,q}(- \varepsilon_p \varepsilon_q
			y^2 \theta^2 S_{-p,p}(a))]]
		\end{align*}
		and thus by the choice of $y_2$ together with \eqref{eq:lemma:spgen:delta:outside:one:row:S:two}
		we get that ${}^{g_1} T_{-p,q}(y \theta S_{-p,p}(a)) \in H$ for all 
		$\theta \in R'$ whenever $y$ is a multiple of $y_1 y_2^2$. Combining this result
		again with \eqref{eq:lemma:spgen:delta:outside:one:row:S:two} we get that 
		${}^{g_1} T_{-q,q}(- \varepsilon_p \varepsilon_q y^2 \theta^2
				S_{-p,p}(a)) \in H$ for all
		$\theta \in S$ whenever $y$ is a multiple of $y_1 y_2^2$.
		Thus, 
		$S_{-p,p}(a) \in \Gamma_{-q}$ and by Proposition \ref{prop:symplss:change:indices} 
		$S_{-p,p}(a) \in \Gamma_{-p}$. This completes the proof.
    \end{proof}
\end{lemma}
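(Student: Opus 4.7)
The factorisation \eqref{eq:lemma:spgen:delta:outside:one:row:decomp} is verified by a direct matrix computation: multiplying $a$ on the left by $T_{-p,j}(-a_{-p,j})$ clears the $(-p,j)$-entry while leaving every other row fixed, since the second summand of a short transvection would affect row $-j$, which by assumption already agrees with the identity. The surviving long transvection is identified as $T_{-p,p}(S_{-p,p}(a))$ using the symplectic inverse relation $a'_{j,p} = \varepsilon_j \varepsilon_p a_{-p,-j}$. Given the factorisation, to conclude $a \in \Ep(\sigma,\Gamma)$ it is enough to show (a) $a_{-p,j} \in \sigma_{-p,j}$ for every $j \neq \pm p$, and (b) $S_{-p,p}(a) \in \Gamma_{-p}$.

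For (a), if $j \sim -p$ the inclusion is automatic because $(\sigma,\Gamma)$ is major. Otherwise, using $h(\nu) \geq (4,3)$, pick $k \sim j$ with $k \neq \pm j, \pm p$ and, via Corollary \ref{cor:spgen:common:denominator}, an element $x_1 \in S$ such that ${}^g T_{jk}(x_1) \in H$. The commutator
\[
X = [{}^g a, {}^g T_{jk}(x_1)] = {}^g\!\left( T_{-p,k}(a_{-p,j} x_1) \, T_{-p,-j}(\pm a_{-p,-k} x_1) \right)
\]
lies in $H$. I then split on the shape of $\nu(j)$: (i) if $\nu(j)$ is non-self-conjugate and $\nu(j) = \{\pm j, \pm p\}$, then $\pm a_{-p,-k} x_1 \in \sigma^g_{-p,-j} = R$ trivially and one further commutator with a transvection in $\Ep(\nu,R)$ isolates ${}^g T_{-p,j}(\cdot\, a_{-p,j}) \in H$; (ii) if $\nu(j)$ is non-self-conjugate of larger size, a third index $h \sim j$ with $h \neq \pm k, \pm j, \pm p$ is available, and the nested commutator $[[X, {}^g T_{kh}(x_2)], {}^g T_{hk}(x_2 \theta)]$ isolates the desired transvection; (iii) if $\nu(j)$ is self-conjugate, three commutators with transvections indexed by $k, -j$ and $j$ do the job. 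Each case yields $a_{-p,j} \in \sigma^g_{-p,j}$, which equals $\sigma_{-p,j}$ by Lemma \ref{lemma:spgen:delocalization}.

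For (b), if $-p \sim p$ then $\Gamma_{-p} = R$ and we are done. Otherwise, set $g_1 = g \prod_{j>0,\, j \neq \pm p} T_{-p,j}(a_{-p,j}) T_{-p,-j}(a_{-p,-j})$, so that $g_1 \in \Ep(\sigma,\Gamma) \cdot g$ (by part (a)) and $g_1 T_{-p,p}(S_{-p,p}(a)) g^{-1} = {}^g a \in H$. Since $p \nsim -p$, pick an $\A$-type base triple $(p,q,t)$. Applying the Steinberg relation (R6) to $[T_{-p,p}(S_{-p,p}(a)), T_{pq}(y\theta)]$, conjugating by $g_1$ on the left and $g^{-1}$ on the right, and comparing the resulting three factors with the membership ${}^g T_{pq}(y\theta) \in H$ (for $y$ a suitable multiple of some $y_1$), yields
\[
{}^{g_1} T_{-p,q}(y\theta S_{-p,p}(a)) \cdot {}^{g_1} T_{-q,q}(-\varepsilon_p \varepsilon_q y^2 \theta^2 S_{-p,p}(a)) \in H
\]
for all $\theta \in R'$. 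A double commutator with ${}^{g_1} T_{-p,-t}(y_2)$ and ${}^{g_1} T_{-t,-p}(y_2)$ then isolates the short factor ${}^{g_1} T_{-p,q}(\cdot) \in H$, whence the long factor ${}^{g_1} T_{-q,q}(\cdot) \in H$ as well. This gives $S_{-p,p}(a) \in \Gamma^{g_1}_{-q} = \Gamma_{-q}$ by Lemma \ref{lemma:spgen:delocalization}, which equals $\Gamma_{-p}$ by Proposition \ref{prop:symplss:change:indices} since $-q \sim -p$.

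The main obstacle is the case split on $\nu(j)$ in part (a), combined with the need to pick a single element of $S$ as a common denominator for all the transvections appearing in the nested commutators; the latter is handled uniformly by Corollary \ref{cor:spgen:common:denominator}, after which the algebraic manipulations are routine Steinberg-relation gymnastics.
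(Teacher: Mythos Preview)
Your plan matches the paper's proof almost step for step: same factorisation argument, same commutator $X=[{}^g a,{}^g T_{jk}(x_1)]$, same case split on the type of $\nu(j)$ with nested commutators to isolate ${}^g T_{-p,j}(\cdot)$, and the same $(R6)$-plus-double-commutator manoeuvre for $S_{-p,p}(a)$ via the auxiliary element $g_1$ and the $\A$-type triple $(p,q,t)$.

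One small slip to fix: in your case (i) you write ``$\nu(j)$ is non-self-conjugate and $\nu(j)=\{\pm j,\pm p\}$'', which is self-contradictory (a non-self-conjugate class cannot contain both $j$ and $-j$). The correct dichotomy, as in the paper, is: either $j\sim p$ (whence $-p\sim -j$ and $\sigma^g_{-p,-j}=R$, so the second factor of $X$ is already a $(\sigma,\Gamma)$-transvection and the first factor is isolated directly, no extra commutator needed), or else there is a further $h\sim j$ with $h\neq\pm k,\pm j,\pm p$. With that corrected, your argument is the paper's argument.
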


\begin{lemma}
    \label{lemma:spgen:delta:inside:one:column}
    Assume $h(\nu) \geq (4,3)$. Let $(p,q)$ be an $A$-type base pair and let $a$ be an element of $\Sp(2n,R)$ such that
    $a_{*p} = e_{*p}$ or $a_{-p,*} = e_{-p,*}$. Assume that there exist elements $g_1, g_2 \in \Ep(\sigma, \Gamma)$ 
    such that $g_1 a g_2 \in H$. Then the inclusion $a_{qj} \in \sigma_{qj}$ holds for each $j \neq -p$. 
    If additionally $a \in \Sp(\sigma)$ then also $S_{q,-q}(a) \in \Gamma_q$.
    \begin{proof}
        As $a$ is symplectic it's easy to see that the conditions of the lemma
        provide the equalities
        $a_{*p} = a'_{*p} = e_{*p}$ and $a_{-p,*} = a_{-p,*}' = e_{-p,*}$. Choose via
        Corollary \ref{cor:spgen:common:denominator} an element $x \in S$
        such that ${}^{g_2^{-1}} T_{pq}(x) \in H$ and 
        consider the matrix
        \begin{align*}
            b = a^{-1} T_{pq}(x) a &= 
            e + a'_{*p} x a_{q*} - \varepsilon_p \varepsilon_q a'_{*,-q} x a_{-p,*} \\
            &= e + e_{*p} x a_{q*} - \varepsilon_p \varepsilon_q a'_{*,-q} x e_{-p,*}.
        \end{align*}
        It's easy to see that $b_{ij} = \delta_{ij}$ whenever $i \neq p$ and $j \neq -p$, that
        $b_{pp} = b_{-p,-p} = 1$ and that
        \[
            {}^{g_1} b = (g_1 a g_2)(g_2^{-1} T_{pq}(x) g_2)(g_2^{-1} a^{-1} g_1^{-1}) \in H.
        \]
        Therefore by Lemma
        \ref{lemma:spgen:delta:outside:one:row} the inclusion
        $b_{pj} \in \sigma_{pj}$ holds for each $j \neq \pm p$. Note that
        $b_{pj} = x a_{qj}$ whenever $j \neq \pm p$. Therefore $a_{qj} \in \sigma_{qj}$ for all $j \neq -p$.
        By Lemma \ref{lemma:spgen:delta:inside:one:column} we also get the
        inclusion $S_{p,-p}(b) \in \Gamma_q$. Assume $a \in \Sp(\sigma)$. By the
        corollary \ref{cor:symplss:length:of:root:element} we have
        \[
            S_{p,-p}(b) \equiv a^{\prime \; 2}_{pp} x^2 S_{q,-q}(a) + 
            a^{\prime \; 2}_{p,-q} x^2 S_{-p,p}(a) \mod \Gamma_p^{\min}.
        \]
        Recall that $a_{-p,*} = e_{-p,*}$. Thus $S_{-p,p}(a) = 0$. Further $a'_{pp} = 1$,
        and therefore $S_{p,-p}(b) \equiv S_{q,-q}(a) \mod \Gamma_p$. Hence, $S_{q,-q}(a) \in \Gamma_q$.
    \end{proof}
\end{lemma}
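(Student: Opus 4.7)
The plan is to reduce this lemma to the previously established Lemma \ref{lemma:spgen:delta:outside:one:row} by studying the auxiliary matrix
\[
    b \;:=\; a^{-1}\, T_{pq}(x)\, a
\]
for a carefully chosen $x \in S$. Before computing $b$, I would first upgrade the hypothesis using the symplectic identity $a'_{ij} = \varepsilon_i \varepsilon_j a_{-j,-i}$. The relation $\sum_k a'_{ik} a_{kp} = \delta_{ip}$ shows that $a_{*p} = e_{*p}$ forces $a'_{*p} = e_{*p}$, and dually the row hypothesis forces $a'_{-p,*} = e_{-p,*}$. The symplectic identity then propagates each of these to the other, so under either hypothesis we have simultaneously
\[
    a_{*p} = a'_{*p} = e_{*p}, \qquad a_{-p,*} = a'_{-p,*} = e_{-p,*}.
\]

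Expanding $b$ using $T_{pq}(x) = e + x e_{pq} - \varepsilon_p \varepsilon_q x e_{-q,-p}$ and the column/row data yields
\[
    b \;=\; e + x\, e_{*p}\, a_{q*} - \varepsilon_p \varepsilon_q x\, a'_{*,-q}\, e_{-p,*}.
\]
The first nontrivial summand is nonzero only in row $p$, the second only in column $-p$, and a direct check (using $q \neq \pm p$) gives $b_{pp} = b_{-p,-p} = 1$. In particular $b_{pj} = x a_{qj}$ for $j \neq \pm p$. Next I would choose $x \in S$ via Corollary \ref{cor:spgen:common:denominator} so that ${}^{g_2^{-1}} T_{pq}(x) \in H$, which yields
\[
    {}^{g_1} b \;=\; (g_1 a g_2)\,\bigl({}^{g_2^{-1}} T_{pq}(x)\bigr)\,(g_1 a g_2)^{-1} \in H.
\]
Thus the hypotheses of Lemma \ref{lemma:spgen:delta:outside:one:row} are met and $b \in \Ep(\sigma, \Gamma)$. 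In particular $x a_{qj} \in \sigma_{pj}$ for each $j \neq \pm p$. Since $x \in S \subseteq R^{*}$ and $\sigma_{pj}$ is an ideal of $R$, we can multiply by $x^{-1}$ to obtain $a_{qj} \in \sigma_{pj} = \sigma_{qj}$, where the last equality is Proposition \ref{prop:symplss:change:indices} applied to the pair $p \sim q$.

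For the length statement, assuming $a \in \Sp(\sigma)$, I would apply Corollary \ref{cor:symplss:length:of:root:element} to $b = a^{-1} T_{pq}(x) a$ to obtain
\[
    S_{p,-p}(b) \equiv (a'_{pp})^2 x^2 S_{q,-q}(a) + (a'_{p,-q})^2 x^2 S_{-p,p}(a) \pmod{\Gamma_p}.
\]
The second term vanishes because $a_{-p,*} = e_{-p,*}$ immediately forces $S_{-p,p}(a) = \sum_{j > 0} a_{-p,j} a'_{j,p} = 0$, and $a'_{pp} = 1$. Therefore $x^2 S_{q,-q}(a) \equiv S_{p,-p}(b) \pmod{\Gamma_p}$, and the latter lies in $\Gamma_p$ since $b \in \Ep(\sigma, \Gamma) \le \Sp(\sigma, \Gamma)$. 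The very definition of the $S$-closure $\Gamma$ then lets us absorb the factor $x^2$ and conclude $S_{q,-q}(a) \in \Gamma_p = \Gamma_q$.

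The only delicate step I expect is the initial translation between the two alternative hypotheses through the symplectic identity (which makes both column and row conditions equivalent) together with tracking signs in the expansion of $b$. Once these bookkeeping points are settled, both assertions reduce immediately to Lemma \ref{lemma:spgen:delta:outside:one:row} and Corollary \ref{cor:symplss:length:of:root:element}.
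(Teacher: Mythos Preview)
Your proposal is correct and follows essentially the same route as the paper: form $b = a^{-1} T_{pq}(x) a$, check it has the shape required by Lemma~\ref{lemma:spgen:delta:outside:one:row}, and then read off both the entry inclusions and the length condition via Corollary~\ref{cor:symplss:length:of:root:element}. One minor slip (which the paper itself shares): the displayed identity for ${}^{g_1} b$ actually computes $g_1 a T_{pq}(x) a^{-1} g_1^{-1}$, not $g_1 b g_1^{-1}$; the fix is to either conjugate by $g_2^{-1}$ instead, or to choose $x$ so that ${}^{g_1} T_{pq}(x) \in H$, which Corollary~\ref{cor:spgen:common:denominator} equally allows.
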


\begin{lemma}
    \label{lemma:spgen:delta:outside:two:equiv:rows}
    Assume $h(\nu) \geq (4,4)$. Let $(p,q)$ be an $A$-type base pair and $a$ be an element of $\Sp(2n,R)$
    such that $a_{ij} = \delta_{ij}$ whenever $i \neq -p, -q$ and $j \neq p,q$. Assume that there exists
    an element $g \in \Ep(\sigma, \Gamma)$ such that ${}^g a \in H$. Then the inclusion
    $a_{kp} \in \sigma_{kp}$ holds for each $k \neq -p,-q$. If additionally $a \in \Sp(\sigma)$
    then also $S_{-p,p}(a) \in \Gamma_{-p}$.
    \begin{proof}
        Fix any $k \nsim p$. As $h(\nu) \geq (4,4)$, there exists an index $h \sim k$ such that
        $h \neq \pm k, \pm p, \pm q$. Pick using Corollary
        \ref{cor:spgen:common:denominator}
        an element $x \in S$ such that ${}^g T_{hk}(x) \in H$ and
        consider the matrix 
        \[
            b = a^{-1} T_{hk}(x) a = e + a'_{*h} x a_{k*} 
            - \varepsilon_h \varepsilon_k a'_{*,-k} x a_{-h,*}.
        \]    
        It is easy to see that $b_{hp} = x a_{kp}$ and ${}^g b \in H$. 
        Further, there exists an index $l \sim k$ such that $l \neq \pm h, -p$
        and $b_{*l} = e_{*l}$. Indeed, if $k \sim -k$, one can simply take $l = -k$. 
        If the class of $k$ is non-self-conjugate then such $l$ exists due to 
        the condition $h(\nu) \geq (4,4)$ ($l$ can be equal to $-q$ if $-q \sim k$). 
        Therefore, by Lemma \ref{lemma:spgen:delta:inside:one:column}
        we get $x a_{kp} \in \sigma_{hp} = \sigma_{kp}$. Thus $a_{kp} \in \sigma_{kp}$.

        Assume $a \in \Sp(\sigma)$. If the equivalence class of $p$ is self-conjugate, 
        clearly $S_{-p,p}(a) \in R = \Gamma_{-p}$. If the equivalence class 
        of $p$ is non-self-conjugate then, as $h(\nu) \geq (4,4)$, there exists an index
        $t \sim p$ such that $(p,q,t)$ is an $\A$-type base triple. Consider the matrix
        \[
            c = T_{-p,-t}(-a_{-p,-t}) T_{-q,-t}(a_{-q,-t}) a.
        \]
        As $t \sim p \sim q$ it follows that $c \in \Sp(\sigma)$. Note that $a_{-t,-t} = 1$,
        hence $c_{*,-t} = e_{*,-t}$ and
        \[
            g T_{-q,-t}(a_{-q,-t}) T_{-p,-t}(a_{-p,-t}) c g^{-1} \in H.
        \] 
        By Lemma
        \ref{lemma:spgen:delta:inside:one:column} it follows that $S_{-p,p}(c) \in \Gamma_{-p}$. Finally,
        by Corollary \ref{cor:symplss:length:of:left:mult:by:transv} we have
        \[
            S_{-p,p}(c) \equiv S_{-p,p}(a) + a_{-p,-t}^2 S_{-t,t}(a) \mod \Gamma_{-p}
        \]
        and as $S_{-t,t}(a) = 0$ we get $S_{-p,p}(a) \in \Gamma_{-p}$.
    \end{proof}
\end{lemma}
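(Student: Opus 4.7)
The plan is to reduce both assertions to Lemma \ref{lemma:spgen:delta:inside:one:column} by exhibiting, in each case, an auxiliary matrix with a standard-basis column. This is the same philosophy used in the preceding lemmas: conjugation (for the first claim) or left multiplication (for the second claim) by elements of $\Ep(\sigma, \Gamma)$ is used to kill off a column of $a$, after which the previous lemma applies.

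For the first assertion, fix $k \neq -p, -q$. The cases $k \sim p$ are trivial since then $\sigma_{kp} = R$, so I assume $k \nsim p$. Since $h(\nu) \geq (4,4)$ I may choose $h \sim k$ with $h \notin \{\pm k, \pm p, \pm q\}$. Using Corollary \ref{cor:spgen:common:denominator}, I pick $x \in S$ with ${}^g T_{hk}(x) \in H$ and set
\[
    b = a^{-1} T_{hk}(x) a = e + x\,a'_{*h} a_{k*} - \varepsilon_h \varepsilon_k\, x\,a'_{*,-k} a_{-h,*},
\]
so that ${}^g b \in H$. The support hypothesis on $a$ (rows $-h, k$ and the relevant diagonal entry of $a$ are trivial outside columns $p, q$) gives at once $b_{hp} = x\,a_{kp}$ and, more importantly, shows that $b$ can differ from $e$ only in columns indexed by $\{k, -h, p, q\}$. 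The strengthened hypothesis $h(\nu) \geq (4, 4)$ then guarantees an index $l \sim k$ with $l \notin \{\pm h, k, p, q, -p\}$ (take $l = -k$ if $\nu(k)$ is self-conjugate; otherwise use $|\nu(k)| \geq 4$ to pick a fourth element of $\nu(k)$ beyond $k, h$ and possibly $-p$). Then $b_{*l} = e_{*l}$ and $(l, h)$ is an $A$-type base pair, so Lemma \ref{lemma:spgen:delta:inside:one:column} yields $b_{hp} \in \sigma_{hp}$, whence $x\,a_{kp} \in \sigma_{hp} = \sigma_{kp}$ by Proposition \ref{prop:symplss:change:indices}, and invertibility of $x$ in $R$ gives $a_{kp} \in \sigma_{kp}$.

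For the second assertion, assume further $a \in \Sp(\sigma)$. The case $p \sim -p$ is trivial as $\Gamma_{-p} = R$, so assume $p \nsim -p$; then $h(\nu) \geq (4, 4)$ supplies $t \sim p$ so that $(p, q, t)$ is an $A$-type base triple. Set
\[
    c = T_{-p,-t}(-a_{-p,-t})\, T_{-q,-t}(-a_{-q,-t})\, a.
\]
Because $-p \sim -q \sim -t$, both transvections lie in $\Ep(\sigma, \Gamma)$; hence $c \in \Sp(\sigma)$ and, combining with ${}^g a \in H$, there are $g_1, g_2 \in \Ep(\sigma, \Gamma)$ with $g_1 c g_2 \in H$. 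A direct computation using $a_{-t,-t} = 1$ verifies $c_{*,-t} = e_{*,-t}$, so Lemma \ref{lemma:spgen:delta:inside:one:column} applied to the $A$-type base pair $(-t, -p)$ yields $S_{-p,p}(c) \in \Gamma_{-p}$. Two successive applications of Corollary \ref{cor:symplss:length:of:left:mult:by:transv} (one per transvection) give
\[
    S_{-p,p}(c) \equiv S_{-p,p}(a) + a_{-p,-t}^2\, S_{-t,t}(a) + a_{-p,-t}^2 a_{-q,-t}^2\, S_{q,-q}(a) \pmod{\Gamma_{-p}},
\]
and triviality of rows $-t$ and $q$ of $a$ forces $S_{-t,t}(a) = 0 = S_{q,-q}(a)$. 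Hence $S_{-p,p}(a) \equiv S_{-p,p}(c) \pmod{\Gamma_{-p}}$, and the claim follows.

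The main technical subtlety, and the reason for upgrading the hypothesis from $h(\nu) \geq (4, 3)$ in Lemma \ref{lemma:spgen:delta:inside:one:column} to $h(\nu) \geq (4, 4)$ here, lies in the choice of $l$ in the first part: one extra element in $\nu(k)$ is needed to simultaneously avoid the enlarged forbidden set $\{\pm h, k, p, q, -p\}$. The second part, by contrast, is a bookkeeping exercise in the length formula once the right multiplier $c$ has been guessed.
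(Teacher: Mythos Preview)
Your proof is correct and follows essentially the same approach as the paper's: the same conjugation $b = a^{-1}T_{hk}(x)a$ and the same auxiliary index $l$ in the first part, and the same left multiplication to produce $c$ with $c_{*,-t}=e_{*,-t}$ in the second part, both reducing to Lemma~\ref{lemma:spgen:delta:inside:one:column}. Two cosmetic remarks: your sign in $T_{-q,-t}(-a_{-q,-t})$ is actually the correct one (the paper has a typo there), and the extra term $a_{-p,-t}^2 a_{-q,-t}^2 S_{q,-q}(a)$ in your length computation should not appear---for $i=-t$ the transvection $T_{-q,-t}$ falls into the ``else'' case of Corollary~\ref{cor:symplss:length:of:left:mult:by:transv}---but since you correctly observe $S_{q,-q}(a)=0$ this is harmless.
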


\begin{lemma}
    \label{lemma:spgen:delta:outside:pm}
    Assume $h(\nu) \geq (4,4)$. Let $p$ be an index in $I$ with self-conjugate equivalence class and let a 
    be an element of $\Sp(2n,R)$
    such that $a_{ij} = \delta_{ij}$ whenever $i \neq \pm p$ and $j \neq \pm p$. If there exists
    an element $g \in \Ep(\sigma, \Gamma)$ such that ${}^g a \in H$, then $a_{kp} \in \sigma_{kp}$ for all $k \in I$.
    \begin{proof}
        If $k \sim p$ the inclusion $a_{kp} \in \sigma_{kp}$ is trivial.
        Assume $k \nsim p$, in particular $k \neq \pm p$. As $h(\nu) \geq (4,4)$, there exists
        another index $h \sim k$ such that $h \neq \pm k \pm p$.     
        Pick using Corollary \ref{cor:spgen:common:denominator} an element $x \in S$
        such that ${}^g T_{hk}(x) \in H$ and consider the matrix 
        \[
            b = a^{-1} T_{hk}(x) a = e + a'_{*h} m a_{k*} 
            - \varepsilon_h \varepsilon_k a'_{*,-k} m a_{-h,*}.
        \]
        We will show that it satisfies the conditions of Lemma 
        \ref{lemma:spgen:delta:inside:one:column}.
        Indeed, by choice of $x$ the inclusion ${}^g b \in H$ holds.
        Pick an index $q$ such that $(p,q)$ is a $\C$-type base pair. 
        It is easy to see that 
        $q \nsim \pm k$. Clearly $b_{*q} = e_{*q}$. 
        Applying Lemma \ref{lemma:spgen:delta:inside:one:column} to the matrix $a$, the elementary
        transvection $T_{hk}(x)$ and the pair $(-p,q)$, we get $b_{-p,j} \in \sigma_{-p,j}$ for all $j \neq -q$.
        Thus $b_{-p,-h} \in \sigma_{-p,-h} = \sigma_{kp}$ and it is only left to notice that
        $b_{-p,-h} = - \varepsilon_h \varepsilon_k a'_{-p,-k} x a_{-h,-h} = \pm x a_{kp}$.
        Therefore $a_{kp} \in \sigma_{kp}$ for all $k \in I$.
    \end{proof}
\end{lemma}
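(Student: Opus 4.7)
The plan is the usual commutator trick: conjugate a short transvection $T_{hk}(x)$ by $a^{-1}$ to produce an auxiliary matrix $b$ in which the entry $a_{k,p}$ appears, and then extract the information from $b$ via Lemma \ref{lemma:spgen:delta:inside:one:column}. Since $\sigma_{k,p}=R$ whenever $k\sim p$, I may assume $k\nsim p$; in particular $k\neq\pm p$. Using $h(\nu)\geq(4,4)$, fix an index $h\sim k$ with $h\neq\pm k,\pm p$, and choose $x\in S$ via Corollary \ref{cor:spgen:common:denominator} so that ${}^gT_{hk}(x)\in H$. Set
\[
b \;=\; a^{-1}T_{hk}(x)a \;=\; e + x\,a'_{*h}a_{k*} - \varepsilon_h\varepsilon_k\,x\,a'_{*,-k}a_{-h,*}.
\]
Then ${}^gb = ({}^ga)^{-1}({}^gT_{hk}(x))({}^ga)\in H$.

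The key structural observation is that $b$ agrees with $e$ outside the columns $k$ and $-h$. Indeed, since $k,-h\neq\pm p$, the hypothesis on $a$ forces $a_{k*}=e_{k*}$ and $a_{-h,*}=e_{-h,*}$, so each of the two rank-one summands defining $b-e$ is supported in a single column. A direct computation, using the symplectic identity $a'_{ij}=\varepsilon_i\varepsilon_j a_{-j,-i}$, then yields
\[
b_{-p,-h} \;=\; -\varepsilon_h\varepsilon_k\,x\,a'_{-p,-k} \;=\; -\varepsilon_h\varepsilon_p\,x\,a_{k,p},
\]
so the target entry is encoded, up to sign and the unit $x$, as $b_{-p,-h}$.

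To extract it I apply Lemma \ref{lemma:spgen:delta:inside:one:column} to $b$. Because $\nu(p)$ is self-conjugate of size at least $4$, pick $q\in\nu(p)$ with $q\neq\pm p$; then $(q,-p)$ is an A-type base pair, and since $k\nsim p\sim q$ we also have $q\neq\pm k,\pm h$, hence $q\notin\{k,-h\}$. By the previous paragraph this forces $b_{*,q}=e_{*,q}$. The lemma, applied to $b$ with the pair $(q,-p)$ and with $g_1=g$, $g_2=g^{-1}\in\Ep(\sigma,\Gamma)$ (so that $g_1bg_2\in H$), yields $b_{-p,j}\in\sigma_{-p,j}$ for every $j\neq-q$. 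Taking $j=-h$ (permissible since $h\neq q$) gives $x\,a_{k,p}\in\sigma_{-p,-h}$, and by unitarity together with Proposition \ref{prop:symplss:change:indices} this ideal equals $\sigma_{h,p}=\sigma_{k,p}$. Since $x\in S\subseteq R^*$ and $\sigma_{k,p}$ is an ideal of $R$, multiplying by $x^{-1}$ gives $a_{k,p}\in\sigma_{k,p}$.

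The main subtlety, and the reason the previous lemmas do not apply directly, is engineering the conjugating transvection so that the resulting matrix $b$ is trivial in \emph{both} columns $p$ and $-p$; this is what allows an A-type base pair from the self-conjugate class of $p$ to supply the column hypothesis of Lemma \ref{lemma:spgen:delta:inside:one:column}. The choice $h\neq\pm p$ is essential for this, for otherwise $a_{k*}$ or $a_{-h,*}$ would not be a standard basis row and $b$ would acquire non-trivial entries in columns $\pm p$, destroying the clean column structure required.
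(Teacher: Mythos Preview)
Your approach is exactly the paper's: conjugate $T_{hk}(x)$ by $a^{-1}$, verify that the resulting $b$ has a trivial column $q$ for some $q\in\nu(p)\setminus\{\pm p\}$, and apply Lemma~\ref{lemma:spgen:delta:inside:one:column} with the $\A$-type pair $(q,-p)$ to read off $b_{-p,-h}=\pm x\,a_{kp}\in\sigma_{-p,-h}=\sigma_{kp}$.

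There is, however, one incorrect intermediate claim. You write that ``the hypothesis on $a$ forces $a_{k*}=e_{k*}$ and $a_{-h,*}=e_{-h,*}$,'' and hence that $b$ agrees with $e$ outside columns $k$ and $-h$. This is false: the hypothesis $a_{ij}=\delta_{ij}$ for $i\neq\pm p$ and $j\neq\pm p$ says nothing about $a_{k,\pm p}$ or $a_{-h,\pm p}$, so the rows $a_{k*}$ and $a_{-h,*}$ may well have nonzero entries in positions $\pm p$ (indeed $a_{kp}$ is precisely the quantity you are trying to control). Consequently $b$ can differ from $e$ in columns $\pm p$ as well, and your concluding paragraph about $b$ being ``trivial in both columns $p$ and $-p$'' is likewise unfounded.

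Fortunately this does not damage the argument. What you actually need is only $b_{*q}=e_{*q}$, and this holds because $q\neq\pm p$ (so $a_{kq}=\delta_{kq}$ and $a_{-h,q}=\delta_{-h,q}$ by the hypothesis) together with $q\neq k,-h$. Your computation of $b_{-p,-h}$ is unaffected, and the application of Lemma~\ref{lemma:spgen:delta:inside:one:column} goes through unchanged. So the fix is simply to replace the overreaching structural claim by the direct verification that $a_{kq}=a_{-h,q}=0$.
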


\paragraph{Extraction of transvections using root elements}

\begin{lemma}
    \label{lemma:spgen:short:alpha:beta:not:selfconj}
    Assume $h(\nu) \geq (4,4)$. Let $(p,q,h)$ be an $\A$-type base triple,
    $a$ an element of $\Sp(2n,R)$ and $T_{sr}(\xi)$ a short elementary 
    transvection. Let $b$ denote the
    short root element $a T_{sr}(\xi) a^{-1}$. Suppose that 
    $a_{p,-r} = a_{q,-r} = 0$. Assume that there exist elements 
    $g_1, g_2 \in \Ep(\sigma, \Gamma)$ such that $g_1 a g_2 \in H$ 
    and ${}^{g_2^{-1}} T_{sr}(\xi) \in H$. Then $a_{ps} b_{ih} \in \sigma_{ih}$ for all $i \neq -p, -q$.
    If additionally $a \in \Sp(\sigma)$ then also 
    $a_{ps}^2 S_{-h,h}(b^{-1}) \in \Gamma_{-h}$.
    \begin{proof}
        It is easy to see that ${}^{g_1} b \in H$. Indeed
        \[
            {}^{g_1} b = (g_1 a g_2) (g_2^{-1} T_{sr}(\xi) g_2) (g_2^{-1} a^{-1} g_1^{-1}) \in H.
        \]
        Using Corollary \ref{cor:spgen:common:denominator} pick an element $x \in S$ such that
        ${}^{g_1} T_{hp}(-a_{qs} x), {}^{g_1} T_{hq}(a_{ps} x) \in H$. Set 
        $\alpha = - a_{qs} x$ and $\beta = a_{ps} x$. Clearly, $\alpha a_{ps} + \beta a_{qs} = 0$.
        Consider the matrix 
        \begin{equation}
            \label{eq:lemma:spgen:short:alpha:beta:not:selfconj:c}
            c = b T_{hp}(\alpha) T_{hq}(\beta) b^{-1} 
              = e + b_{*h} (\alpha b'_{p*} + \beta b'_{q*}) 
              - \varepsilon_h (\varepsilon_p \alpha b_{*,-p} + 
              \varepsilon_q \beta b_{*,-q}) b'_{-h,*}.
        \end{equation}
        Obviously, ${}^{g_1} c \in H$ and by the conditions that $a_{p,-r} = a_{q,-r} = 0$
        and $\alpha a_{ps} + \beta a_{qs} = 0$ it easily
        follows that $(\alpha b'_{p*} + \beta b'_{q*}) = (\alpha e_{p*} + \beta e_{q*})$
        and $(\varepsilon_p \alpha b_{*,-p} + \varepsilon_q \beta b_{*,-q})
        = (\varepsilon_p \alpha e_{*,-p} + \varepsilon_q \beta e_{*,-q})$.
        Thus we can rewrite \eqref{eq:lemma:spgen:short:alpha:beta:not:selfconj:c} as follows
        \[
            c = e + b_{*h} (\alpha e_{p*} + \beta e_{q*}) 
              - \varepsilon_h (\varepsilon_p \alpha e_{*,-p} + 
              \varepsilon_q \beta e_{*,-q}) b'_{-h,*}.
        \]      
        In particular, $c_{ij} = \delta_{ij}$ whenever $i \neq -p,-q$ and $j \neq p,q$.
        Applying Lemma
        \ref{lemma:spgen:delta:outside:two:equiv:rows} to the matrix $c$ we get the inclusion 
        $c_{iq} \in \sigma_{iq}$ for all $i \neq -p,-q$. Notice that  
        $c_{iq} = \beta b_{ih} = x a_{ps} b_{ih}$ for $i \neq -p,-q$. This completes the 
        proof of the first part of the lemma.

        Assume $a \in \Sp(\sigma)$. It follows that $b, c \in \Sp(\sigma)$. Therefore by Lemma
        \ref{lemma:spgen:delta:outside:two:equiv:rows} we get the inclusion $S_{-q,q}(c) \in \Gamma_{-q}$.
        By Corollary \ref{cor:symplss:length:of:root:element} we have
        \begin{equation}
            \label{eq:lemma:spgen:short:alpha:beta:not:selfconj:1}
            \begin{aligned}
                S_{-q,q}(c) & \equiv
                b^2_{-q,h} x^2 (a_{qs}^2 S_{p,-p}(b^{-1}) + a_{ps}^2 S_{q,-q}(b^{-1})) \\
                &\quad + (b_{-q,-p}^2 a_{qs}^2 + b_{-q,-q}^2 a_{ps}^2) x^2 S_{-h,h}(b^{-1}) \in \Gamma_{-q}.
            \end{aligned}
        \end{equation}                
        As $S_{-h,h}(b^{-1}) \in \sigma_{-q,q}$ and $2 \sigma_{-q,q} \le \Gamma_i$ it follows that
        \begin{equation}
            \label{eq:lemma:spgen:short:alpha:beta:not:selfconj:2}
            \begin{aligned}
                (b_{-q,-p}^2 a_{qs}^2 + b_{-q,-q}^2 a_{ps}^2) x^2 S_{-h,h}(b^{-1}) \equiv \\
                (b_{-q,-p} a_{qs} + b_{-q,-q} a_{ps})^2 x^2 S_{-h,h}(b^{-1}) \mod \Gamma_{-q}.
            \end{aligned}
        \end{equation}
        A straightforward calculation shows that $b_{-q,-p} a_{qs} + b_{-q,-q} a_{ps} \in a_{ps} + 2R$,
        which together with \eqref{eq:lemma:spgen:short:alpha:beta:not:selfconj:2} yields
        \begin{equation}
            \label{eq:lemma:spgen:short:alpha:beta:not:selfconj:3}
            (b_{-q,-p}^2 a_{qs}^2 + b_{-q,-q}^2 a_{ps}^2) x^2 S_{-h,h}(b^{-1}) \equiv 
            a_{ps}^2 x^2 S_{-h,h}(b^{-1}) \mod \Gamma_{-q}.
        \end{equation}
        As $a_{p,-r} = a_{q,-r} = 0$, we have
        \begin{align*}
            b'_{pj} &= \delta_{pj} - a_{ps} \xi a'_{rj} \\
            b'_{qj} &= \delta_{qj} - a_{qs} \xi a'_{rj}.
        \end{align*}
        In particular, $b'_{p,-p} = b'_{q,-q} = b'_{p,-q} = b'_{q,-p} = 0$.
        Therefore
        \begin{align*}
            a_{qs}^2 S_{p,-p}(b^{-1}) &= - \varepsilon_p \sum_{j>0, j \neq \pm p, \pm q} a_{qs}^2 b'_{pj} b'_{p,-j}
            = - \varepsilon_p \sum_{j>0, j \neq \pm p, \pm q} a_{qs}^2 a_{ps}^2 \xi^2 a'_{rj} a'_{r,-j} \\
            &= \varepsilon_p \varepsilon_q a_{ps}^2 S_{q,-q}(b^{-1})
        \end{align*}
        and thus 
        \begin{equation}
            \label{eq:lemma:spgen:short:alpha:beta:not:selfconj:4}
           b^2_{-q,h} x^2 (a_{qs}^2 S_{p,-p}(b^{-1}) + a_{ps}^2 S_{q,-q}(b^{-1})) \in 2 \sigma_{-q,q} \le \Gamma_{-q}.
        \end{equation}
        Combining \eqref{eq:lemma:spgen:short:alpha:beta:not:selfconj:1},
        \eqref{eq:lemma:spgen:short:alpha:beta:not:selfconj:2}, \eqref{eq:lemma:spgen:short:alpha:beta:not:selfconj:3}
        and \eqref{eq:lemma:spgen:short:alpha:beta:not:selfconj:4} we get the inclusion
        $a_{ps}^2 x^2 S_{-h,h}(b^{-1}) \in \Gamma_{-h}$. By the property ($\Gamma 2$) of a form net of ideals
        it follows that $a_{ps}^2 S_{-h,h}(b^{-1}) \in \Gamma_{-h}$.
    \end{proof}
\end{lemma}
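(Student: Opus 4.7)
The plan is to reduce both assertions to Lemma \ref{lemma:spgen:delta:outside:two:equiv:rows} by building a carefully chosen auxiliary matrix. Using Corollary \ref{cor:spgen:common:denominator} I would first pick a common denominator $x \in S$ such that the transvections $T_{hp}(-a_{qs} x)$ and $T_{hq}(a_{ps} x)$ are both $g_1$-admissible. Setting $\alpha = -a_{qs} x$ and $\beta = a_{ps} x$, the identity $\alpha a_{ps} + \beta a_{qs} = 0$ holds by design, and since $(p,q,h)$ is an $A$-type triple these two transvections commute by (R3). I would then form $c = b \cdot T_{hp}(\alpha) T_{hq}(\beta) \cdot b^{-1}$; combining $g_1 a g_2 \in H$, ${}^{g_2^{-1}} T_{sr}(\xi) \in H$, and the $g_1$-admissibility of $T_{hp}(\alpha), T_{hq}(\beta)$ gives ${}^{g_1} c \in H$.

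The next step is to exploit the vanishing $a_{p,-r} = a_{q,-r} = 0$, which via the symplectic identity $a'_{ij} = \varepsilon_i \varepsilon_j a_{-j,-i}$ also gives $a'_{r,-p} = a'_{r,-q} = 0$. A direct expansion using $b'_{pj} = \delta_{pj} - a_{ps} \xi a'_{rj}$ and the analogous formula for $b'_{qj}$ collapses $\alpha b'_{p*} + \beta b'_{q*}$ to $\alpha e_{p*} + \beta e_{q*}$, since the only mixed term is proportional to $\alpha a_{ps} + \beta a_{qs} = 0$; an analogous calculation yields $\varepsilon_p \alpha b_{*,-p} + \varepsilon_q \beta b_{*,-q} = \varepsilon_p \alpha e_{*,-p} + \varepsilon_q \beta e_{*,-q}$. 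Consequently $c_{ij} = \delta_{ij}$ off rows $\{\pm p, \pm q\}$ and columns $\{p, q\}$, so the hypotheses of Lemma \ref{lemma:spgen:delta:outside:two:equiv:rows} are met. Applying that lemma (with $p$ and $q$ interchanged, using the evident symmetry of its hypotheses) yields $c_{iq} \in \sigma_{iq}$ for $i \neq -p, -q$. Since $c_{iq} = \beta b_{ih} = x a_{ps} b_{ih}$ and $\sigma_{iq} = \sigma_{ih}$ (Proposition \ref{prop:symplss:change:indices}, as $q \sim h$), the $S$-closure description of Lemma \ref{lemma:spgen:delocalization} lets us absorb $x$ and conclude $a_{ps} b_{ih} \in \sigma_{ih}$.

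For the length assertion, assuming $a \in \Sp(\sigma)$ so that $b, c \in \Sp(\sigma)$, Lemma \ref{lemma:spgen:delta:outside:two:equiv:rows} additionally gives $S_{-q,q}(c) \in \Gamma_{-q}$. I would then expand $S_{-q,q}(c)$ via Corollary \ref{cor:symplss:length:of:root:element}. The main obstacle is the modular bookkeeping that follows: the expansion splits into a piece containing $S_{p,-p}(b^{-1})$ and $S_{q,-q}(b^{-1})$, and a piece containing $S_{-h,h}(b^{-1})$. For the first piece, the vanishing $b'_{p,\pm p} = b'_{q,\pm q} = b'_{p,-q} = b'_{q,-p} = 0$ (inherited from $a_{p,-r} = a_{q,-r} = 0$) forces $a_{qs}^2 S_{p,-p}(b^{-1})$ and $a_{ps}^2 S_{q,-q}(b^{-1})$ to be proportional up to sign, so their sum falls in $2 \sigma_{-q,q} \le \Gamma_{-q}$. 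For the second piece, one checks that $b_{-q,-p}^2 a_{qs}^2 + b_{-q,-q}^2 a_{ps}^2$ is congruent mod $2R$ to $(b_{-q,-p} a_{qs} + b_{-q,-q} a_{ps})^2 \equiv a_{ps}^2$, so this piece reduces to $a_{ps}^2 x^2 S_{-h,h}(b^{-1})$ modulo $\Gamma_{-q}$. Combining, $a_{ps}^2 x^2 S_{-h,h}(b^{-1}) \in \Gamma_{-q} = \Gamma_{-h}$ (Proposition \ref{prop:symplss:change:indices} again), and the surplus factor $x^2$ may be absorbed via the $S$-closure characterization of $\Gamma$ to yield the claim.
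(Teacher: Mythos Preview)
Your proposal is correct and follows essentially the same route as the paper's proof: form $c = b\,T_{hp}(\alpha)T_{hq}(\beta)\,b^{-1}$ with $\alpha=-a_{qs}x$, $\beta=a_{ps}x$, use the cancellation $\alpha a_{ps}+\beta a_{qs}=0$ together with $a_{p,-r}=a_{q,-r}=0$ to collapse $c$ into the shape required by Lemma~\ref{lemma:spgen:delta:outside:two:equiv:rows}, then read off $c_{iq}=x a_{ps} b_{ih}$ and, for the length part, unwind $S_{-q,q}(c)$ via Corollary~\ref{cor:symplss:length:of:root:element} exactly as you describe.

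One small correction: the way you absorb the factor $x$ (respectively $x^2$) is not via the $S$-closure or Lemma~\ref{lemma:spgen:delocalization}. In a standard setting $S\subset R^*$, so $x$ is a unit in $R$; since $\sigma_{ih}$ is an $R$-ideal, $x a_{ps}b_{ih}\in\sigma_{ih}$ gives $a_{ps}b_{ih}\in\sigma_{ih}$ immediately, and for the form parameter one uses the axiom $\sigma_{-h,-h}^{\rectangled{2}}\Gamma_{-h}\le\Gamma_{-h}$ with $x^{-1}\in R=\sigma_{-h,-h}$ (this is the ``property ($\Gamma2$)'' the paper invokes). Also, your phrase ``off rows $\{\pm p,\pm q\}$'' should read ``off rows $\{-p,-q\}$'', which is what your collapse formulas actually prove and what Lemma~\ref{lemma:spgen:delta:outside:two:equiv:rows} requires.
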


\begin{lemma}
    \label{lemma:spgen:long:alpha:beta:selfconj}
    Assume $h(\nu) \geq (4,4)$. Let $(p,q)$ be a $\C$-type base pair,
    $a$ an element of $\Sp(2n,R)$ and $T_{s,-s}(\xi)$ 
    a long elementary transvection. Let $b$ denote the
    long root element $a T_{s,-s}(\xi) a^{-1}$. If there exist elements 
    $g_1, g_2 \in \Ep(\sigma, \Gamma)$ such that $g_1 a g_2 \in H$ 
    and ${}^{g_2^{-1}} T_{s,-s}(\xi) \in H$, then $a_{ps} b_{ih} \in \sigma_{ih}$
    for all $i \in I$.
    \begin{proof}        
        Clearly ${}^{g_1} b \in H$. Indeed
        \[ {}^{g_1} b = g_1 a T_{s,-s}(\xi) a^{-1} g_1^{-1} = (g_1 a g_2) ({}^{g_2^{-1}} T_{s,-s}(\xi))
        (g_2^{-1} a^{-1} g_1^{-1}) \]
        and each term in brackets is contained in $H$.        
        Pick $x \in S$ such that ${}^{g_1} T_{hp}(-a_{-p,s} x)$, 
        ${}^{g_1} T_{h,-p}(a_{ps} x) \in H$. Set $\alpha = -a_{-p,s} x$ 
        and $\beta = a_{ps} x$. Clearly $a_{ps} \alpha + a_{-p,s} \beta = 0$.
        It easily follows that $b'_{p*} \alpha + b'_{-p,*} \beta = e_{p*} \alpha + e_{-p,*} \beta$
        and $(\varepsilon_p \alpha b_{*,-p} + \varepsilon_{-p} \beta b_{*p})
        = (\varepsilon_p \alpha e_{*,-p} + \varepsilon_{-p} \beta e_{*p})$.         
        Consider the matrix 
        \begin{align*}
            c = b T_{hp}(\alpha) T_{h,-p}(\beta) b^{-1}
              &= e + b_{*h} (b'_{p*} \alpha + b_{-p,*} \beta)
                  - \varepsilon_h (\varepsilon_p \alpha b_{*,-p} + \varepsilon_{-p} \beta b_{*p}) b'_{-h,*} \\
              &= e + b_{*h} (e_{p*} \alpha + e_{-p,*} \beta)
                  - \varepsilon_h (\varepsilon_p \alpha e_{*,-p} + \varepsilon_{-p} \beta e_{*p}) b'_{-h,*}.
        \end{align*}
        It is easy to see that $c_{ij} = \delta_{ij}$ whenever $i \neq \pm p$ and $j \neq \pm p$. By  
        Lemma \ref{lemma:spgen:delta:outside:pm} it follows that $c_{i,-p} \in \sigma_{ip}$ for all $i \in I$.
        It's only left to notice that $c_{i,-p} = \beta b_{ih} = x a_{ps} b_{ih}$ whenever $i \neq \pm p$.
        Hence $a_{ps} b_{ih} \in \sigma_{ih}$ for all $i \in I$.
    \end{proof}
\end{lemma}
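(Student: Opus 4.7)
The plan is to mirror the construction of Lemma \ref{lemma:spgen:short:alpha:beta:not:selfconj}, now exploiting the fact that the class $\nu(p)$ is self-conjugate instead of passing through an $\A$-type triple. Implicit in the statement is that $h$ is a fixed index with $h \sim p$ and $h \neq \pm p$, which exists because $h(\nu) \geq (4,4)$ and $(p,q)$ is $\C$-type, so $\nu(p) \supseteq \{\pm p, \pm q\}$. First observe that ${}^{g_1} b \in H$: one factors $g_1 b g_1^{-1} = (g_1 a g_2)(g_2^{-1} T_{s,-s}(\xi) g_2)(g_2^{-1} a^{-1} g_1^{-1})$, and each of the three factors lies in $H$ by hypothesis.

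The core step is to form the matrix $c = b\, T_{hp}(\alpha) T_{h,-p}(\beta) b^{-1}$ for a cleverly chosen $\alpha,\beta$. Using Corollary \ref{cor:spgen:common:denominator}, pick $x \in S$ such that both ${}^{g_1} T_{hp}(-a_{-p,s} x)$ and ${}^{g_1} T_{h,-p}(a_{ps} x)$ lie in $H$; this is legitimate because $h \sim p \sim -p$ puts both transvections in $\Ep(\sigma, \Gamma)$ after clearing denominators. Set $\alpha = -a_{-p,s} x$ and $\beta = a_{ps} x$, which satisfy the crucial identity $a_{ps}\alpha + a_{-p,s}\beta = 0$. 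From $b^{-1} = a T_{s,-s}(-\xi) a^{-1}$ one finds $b'_{ij} = \delta_{ij} - \xi a_{is} a'_{-s,j}$, and this identity forces the ``$T_{s,-s}$-correction'' in $b'_{p*}\alpha + b'_{-p,*}\beta$ and in $\alpha b_{*,-p} + \beta b_{*p}$ to vanish. Consequently,
\[
    c = e + b_{*h}\bigl(e_{p*}\alpha + e_{-p,*}\beta\bigr) - \varepsilon_h\bigl(\varepsilon_p \alpha e_{*,-p} + \varepsilon_{-p} \beta e_{*p}\bigr) b'_{-h,*},
\]
whose decisive feature is that $c_{ij} = \delta_{ij}$ whenever $i \neq \pm p$ and $j \neq \pm p$.

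Because ${}^{g_1} c \in H$ and the class $\nu(p)$ is self-conjugate, Lemma \ref{lemma:spgen:delta:outside:pm}, applied with $-p$ in place of its $p$ (permitted by the $\pm p$-symmetry of its hypothesis), yields $c_{i,-p} \in \sigma_{i,-p}$ for every $i \in I$. Reading off the displayed formula gives $c_{i,-p} = \beta\, b_{ih} = x\, a_{ps}\, b_{ih}$ for $i \neq \pm p$, so after absorbing the factor $x \in S$ into the definition of the $S$-closure one obtains $a_{ps} b_{ih} \in \sigma_{i,-p}$, and Proposition \ref{prop:symplss:change:indices} (with $h \sim -p$ via self-conjugacy) rewrites $\sigma_{i,-p} = \sigma_{ih}$. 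The two remaining cases $i = \pm p$ are immediate: since $h \sim \pm p$ and $(\sigma,\Gamma)$ is a major $\D$-net, $\sigma_{\pm p, h} = R$.

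The main obstacle in executing this plan is verifying that the cancellation $a_{ps}\alpha + a_{-p,s}\beta = 0$ really does confine the nontrivial entries of $c$ to rows and columns indexed by $\pm p$; once that bookkeeping is settled, Lemma \ref{lemma:spgen:delta:outside:pm} finishes the job. In contrast to the short-root case treated in Lemma \ref{lemma:spgen:short:alpha:beta:not:selfconj}, no auxiliary length-of-row computation is needed, because working with a long transvection inside a self-conjugate class absorbs the form-parameter obligations into the trivial identity $\Gamma_{\pm p} = R$.
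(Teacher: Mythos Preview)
Your proposal is correct and follows essentially the same construction as the paper's proof: both form $c = b\,T_{hp}(\alpha)T_{h,-p}(\beta)b^{-1}$ with $\alpha = -a_{-p,s}x$, $\beta = a_{ps}x$, use the cancellation $a_{ps}\alpha + a_{-p,s}\beta = 0$ to confine the nontrivial entries of $c$ to rows and columns $\pm p$, and then invoke Lemma~\ref{lemma:spgen:delta:outside:pm}. Your write-up is in fact slightly more careful than the paper's, since you make explicit that $h$ must be chosen with $h\sim p$, $h\neq\pm p$, and you treat the residual cases $i=\pm p$ and the passage from $\sigma_{i,-p}$ to $\sigma_{ih}$ explicitly; note also that dividing out the factor $x$ is immediate because $S\subseteq R^*$ in a standard setting.
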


\section{At the level of the Jacobson radical}
\label{sec:spgen:jac}

The whole benefit of using localization based proofs is that we can work over a local ring, where
every element is either invertible or is contained in the unique maximal ideal, which coincides with 
the Jacobson radical. Thus, the extraction of transvections over local rings rely on
certain elements either being invertible or being contained in the Jacobson radical. In this section, we
investigate the latter case. Consider a standard setting $(R,R',S)$. Let $J$ denote the Jacobson radical 
of $R$. We will show (Corollary \ref{cor:spgen:jacobson:intersection:with:H})
that the matirces in an overgroup $H$ of $\Ep(\nu, R')$ with sufficiently large $h(\nu)$, that
are contained in $\Sp(2n, R, J)$, are in fact contained in $\Sp(\sigma, \Gamma)$, where, as usual,
$(\sigma, \Gamma)$ stands for the closure of a net $S$-associated with $H$. In fact we will show
that $H \cap \Sp_J \le \Sp(\sigma, \Gamma)$, where $\Sp_J$ is some set of matrices  larger
than $\Sp(2n,R,J)$.

Throughout this section we fix a standard setting $(R, R', S)$, a unitary equivalence relation $\nu$,
a subgroup $H$ of $\Sp(2n, R)$ which contains $\Ep(\nu, R')$ and an exact major form net of ideals $(\sigma', \Gamma')$ 
over $R'$ which is $S$-associated with $H$.
Let $J$ denote the Jacobson radical of the ring $R$ and 
let $(\sigma, \Gamma)$ denote the $S$-closure in $R$ of the form net of ideals $(\sigma', \Gamma')$.
By definition, every element $x$ of the Jacobson radical $J$ is quasi-regular,
i.e. $1 + xy \in R^*$ for all $y \in R$, in particular $1 + x \in R^*$.
It follows that $R^* + J \subseteq R^*$. Indeed, let
$x$ be invertible and $y \in J$ then, as $y$ is quasi-regular, $1 + x^{-1} y \in R^*$.
Therefore, and $x + y = x(1 + x^{-1} y)$ is a unit since it is a product of two units. We will use
this property without reference.

The main idea of this section is as follows. Pick a matrix $a \in H \cap \Sp(2n, R, J)$. 
Fix an $A$-type base quintuple $(p,q,h,t,l)$. We would like to show that $a_{ip} \in \sigma_{ip}$
and $S_{-p,p}(a^{-1}) \in \Gamma_{-p}$. So far the only way of extracting transvections we know
is either from an element close to a parabolic subgroup or from a root element. There is no much hope
that an arbitrary element of $H \cap \Sp(2n, R, J)$ is any close to a parabolic subgroup. Thus we
have to get by with the second option. Let's construct a root element $b = a T_{sr}(\xi) a^{-1}$
which will preserve the information about the original element $a$. Once we show that 
$b_{a,-r} = b_{a,-r} = 0$ it follows by Lemma \ref{lemma:spgen:short:alpha:beta:not:selfconj}
that 
\begin{equation}
    \label{eq:sec:spgen:jac:1}
    a_{ps} (\delta_{ih} + a_{is} \xi a'_{rh} - \varepsilon_s \varepsilon_r a_{i,-r} \xi a'_{-s,h}) =  a_{ps} b_{ih} \in \sigma_{ih}.
\end{equation}
In order to get the inclusion $a_{ip} \in \sigma_{ip}$ we 
have to put either $s=p$ or $-r = p$. Let's choose $s = p$. Note that in this case
$a_{ps} = a_{pp}$ is invertible. In order to get one step closer it would be beneficial to have
$a'_{rh}$ also invertible. This happens for example when $r = h$. Now suppose that $a_{-p,h} = a_{-s,h} = 0$.
In this case the inclusion \eqref{eq:sec:spgen:jac:1} yields the inclusion $a_{ip} \in \sigma_{ip}$ immediately!
This is exactly what we do in Lemma \ref{lemma:spgen:jacobson:not:selfconj}. Lemmas \ref{lemma:spgen:antidiag:weak}
and \ref{lemma:spgen:jacobson:antidiag:strong} serve the goal of getting the condition $a_{-p,h} = 0$
fulfilled. The inclusion $S_{-p,p}(a^{-1}) \in \Gamma_{-p}$ is handled in a similar manner.
The case when we work with a $\C$-type pair rather than an $\A$-type quintuple is quite a bit easier
and is handled by Lemma \ref{lemma:spgen:jac:extract:C}. Everywhere in this section we try to use as little 
of the assumption that $a \in \Sp(2n, R, J)$ as possible, namely we explicitly list all the individual inclusions for
entries of $a$ that we require. This allows us in future to work not only with a matrix in $H \cap \Sp(2n, R, J)$
but with any matrix $a \in H$ having a submatrix that looks like a submatrix of a matrix in $\Sp(2n, R, J)$.

\begin{lemma}
    \label{lemma:spgen:antidiag:weak}
	Assume $h(\nu) \geq (4,4)$. Let $(p,q,h,t,l)$ be an $A$-type base 
	quintuple and $a$ an element of $\Sp(2n,R)$ such that
	$a_{-p,-t} = a_{-h,-t} = a_{-l,-t} = a_{pq} = a_{hq} = 0$,
	$a_{-p,q} a'_{t,-p}$, $a_{-h,q} a'_{t,-h} \in J$ and $a_{qq} \in R^*$.
	If there exist elements $g_1$ and $g_2$ in $\Ep(\sigma,\Gamma)$ such
	that $g_1 a g_2 \in H$ then $a_{p,-t} \in \sigma_{p,-t}$.
	\begin{proof}
		Pick using Corollary \ref{cor:spgen:common:denominator} an element $x \in S$
		such that ${}^{g_2^{-1}} T_{qt}(x) \in H$ and consider the
		matrix $b = a T_{qt}(x) a^{-1}$.
		By choice of the parameter $x$ we have
		\[
			{}^{g_1} b = (g_1 a g_2) (g_2^{-1} T_{qt}(x) g_2) (g_2^{-1} a^{-1} a_1^{-1})
			\in H.
		\]
		Pick using Corollary \ref{cor:spgen:common:denominator}
		another element $y \in S$ such that ${}^{g_1} T_{-p,-h}(y) \in H$ and consider the
		matrix $c = b T_{-p,-h}(y) b^{-1}$. Clearly, ${}^{g_1} c \in H$ for the
		same reason as above. We are going to apply Lemma 
		\ref{lemma:spgen:short:alpha:beta:not:selfconj}
		to the matrix $b$, the short elementary transvection 
		$T_{-p,-h}(y)$ and the $A$-type base triple $(-p,-l,-h)$. In order
		to do this we have to show first that $b_{-p,h} = b_{-l,h} = 0$.
		Indeed, by the assumptions of this lemma
		$a'_{th} = \varepsilon_t \varepsilon_h a_{-h,-t} = 0$ and
		$a_{-p,-t} = a_{-l,-t} = 0$. Thus
		\begin{align*}
			b_{-p,h} &= a_{-p,q} x a'_{th} - \varepsilon_q \varepsilon_t a_{-p,-t}
				x a'_{-q,h} = 0 \\
			b_{-l,h} &= a_{-l,q} x a'_{th} - \varepsilon_q \varepsilon_t a_{-l,-t}
				x a'_{-q,h} = 0.
		\end{align*}
		As $a_{-p,q} a'_{t,-p} \in J$, it follows that
		\begin{align*}
			b_{-p,-p} &= 1 + a_{-p,q} x a'_{t,-p} -
				\varepsilon_q \varepsilon_t a_{-p,-t} x a'_{-q,-p} \\
				&= 
				1 + a_{-p,q} x a'_{t,-p} \in 1 + J \subseteq R^*.
		\end{align*}
		Therefore by Lemma \ref{lemma:spgen:short:alpha:beta:not:selfconj}		
		we get the inclusions $b_{-p,-p} c_{i,-h} \in \sigma_{i,-h}$ and,
		as $b_{-p,-p}$ is invertible, also
		$c_{i,-h} \in \sigma_{i,-h}$ for all $i \neq p,l$. In particular,
		\begin{equation}
			\label{eq:lemma:spgen:antidiag:weak:c:as:b}
			b_{q,-p} y b'_{-h,-h} - \varepsilon_p \varepsilon_h
			b_{qh} y b'_{p,-h}
			= c_{q,-h} \in \sigma_{q,-h}.
		\end{equation}
		Recall that $a_{-h,q} a'_{t,-h} \in J$ Therefore
		\begin{equation}
			\label{eq:lemma:spgen:antidiag:weak:bhh}
			\begin{aligned}
    			b'_{-h,-h} &= 1 - a_{-h,q} x a'_{t,-h} +
			    \varepsilon_q \varepsilon_t
				a_{-h,-t} x a'_{-q,-h} =
				1 - a_{-h,q} x a'_{t,-h} \in 1 + J \subseteq R^*.
			\end{aligned}
		\end{equation}
		Observe that
		\begin{equation}
			\label{eq:lemma:spgen:antidiag:weak:bph}
			b'_{p,-h} = - a_{pq} x a'_{t,-h} + \varepsilon_q \varepsilon_t
				a_{p,-t} x a'_{-q,-h} = 0.
		\end{equation}
		Substituting \eqref{eq:lemma:spgen:antidiag:weak:bhh} and
		\eqref{eq:lemma:spgen:antidiag:weak:bph} into
		\eqref{eq:lemma:spgen:antidiag:weak:c:as:b} we get the inclusion
		$b_{q,-p} \in \sigma_{q,-h} = \sigma_{q,-p}$. Finally, recall
		that $a_{qq} \in R^*$ and
		$a'_{-q,-p} = \varepsilon_q \varepsilon_p a_{pq} = 0$. Therefore
		\[
			a_{qq} x a'_{t,-p} = 
			a_{qq} x a'_{t,-p} - \varepsilon_q \varepsilon_t a_{q,-t} x
			a'_{-q,-p} = b_{q,-p} \in \sigma_{q,-p}.
		\] 
		Thus $a'_{t,-p} \in \sigma_{q,-p}$. It only remains to notice that 
		\[
			a_{p,-t} = - \varepsilon_p \varepsilon_t a'_{t,-p} \in \sigma_{q,-p} = \sigma_{p,-t}.
		\]		
	\end{proof}
\end{lemma}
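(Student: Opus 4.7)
The plan is to extract $a_{p,-t} \in \sigma_{p,-t}$ by applying Lemma \ref{lemma:spgen:short:alpha:beta:not:selfconj} twice in a nested construction: an outer short root element based on the transvection $T_{qt}(x)$ (which drags the $q$-th column of $a$ into the picture and exploits the zeros in the $(-t)$-th column) and an inner one based on $T_{-p,-h}(y)$ (which lets us invoke Lemma \ref{lemma:spgen:short:alpha:beta:not:selfconj} with the $A$-type base triple $(-p,-l,-h)$).

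First I would pick $x \in S$ via Corollary \ref{cor:spgen:common:denominator} so that ${}^{g_2^{-1}} T_{qt}(x) \in H$ and form $b = a T_{qt}(x) a^{-1}$; writing $g_1 b g_1^{-1}$ as a product of three factors lying in $H$ gives ${}^{g_1} b \in H$. Expanding $b_{ij} = \delta_{ij} + a_{iq} x a'_{tj} - \varepsilon_q \varepsilon_t a_{i,-t} x a'_{-q,j}$, I would use the hypotheses $a_{-p,-t} = a_{-h,-t} = a_{-l,-t} = 0$ together with the symplectic identity $a'_{th} = \varepsilon_t \varepsilon_h a_{-h,-t} = 0$ to conclude $b_{-p,h} = b_{-l,h} = 0$, and observe that $b_{-p,-p} = 1 + a_{-p,q} x a'_{t,-p} \in 1 + J \subseteq R^*$ since $a_{-p,q} a'_{t,-p} \in J$.

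Next I would choose $y \in S$ with ${}^{g_1} T_{-p,-h}(y) \in H$ and put $c = b T_{-p,-h}(y) b^{-1}$, so that ${}^{g_1} c \in H$. Applying Lemma \ref{lemma:spgen:short:alpha:beta:not:selfconj} to $b$, the transvection $T_{-p,-h}(y)$, and the $A$-type triple $(-p,-l,-h)$ yields $b_{-p,-p}\, c_{i,-h} \in \sigma_{i,-h}$ for $i \neq p, l$; invertibility of $b_{-p,-p}$ upgrades this to $c_{i,-h} \in \sigma_{i,-h}$. Specializing to $i = q$ and expanding $c_{q,-h} = b_{q,-p} y b'_{-h,-h} - \varepsilon_p \varepsilon_h b_{qh} y b'_{p,-h}$, I would note that $a_{pq} = 0$ together with $a'_{-q,-h} = \varepsilon_q \varepsilon_h a_{hq} = 0$ forces $b'_{p,-h} = 0$, while $b'_{-h,-h} = 1 - a_{-h,q} x a'_{t,-h}$ is again a unit because $a_{-h,q} a'_{t,-h} \in J$. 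This collapses the inclusion to $b_{q,-p} \in \sigma_{q,-h}$, which equals $\sigma_{q,-p}$ by Proposition \ref{prop:symplss:change:indices}.

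Finally, since $a'_{-q,-p} = \varepsilon_q \varepsilon_p a_{pq} = 0$, one has $b_{q,-p} = a_{qq} x a'_{t,-p}$; invertibility of $a_{qq}$ and of $x$ then gives $a'_{t,-p} \in \sigma_{q,-p}$, and the symplectic identity $a_{p,-t} = -\varepsilon_p \varepsilon_t a'_{t,-p}$ together with another application of Proposition \ref{prop:symplss:change:indices} ($\sigma_{q,-p} = \sigma_{p,-t}$) finishes the proof. The main obstacle is the bookkeeping: the two root-element constructions must be arranged so that the zero hypotheses on the $(-t)$-column and on $a_{pq}, a_{hq}$ simultaneously kill four different entries ($b_{-p,h}$, $b_{-l,h}$, $b'_{p,-h}$, and the extra terms in $b_{q,-p}$) while the Jacobson-radical hypotheses convert two would-be-identity entries into honest units, leaving a clean linear equation from which $a'_{t,-p}$ can be read off.
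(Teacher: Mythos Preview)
Your proof is correct and follows the paper's argument essentially step for step: form $b = a T_{qt}(x) a^{-1}$, then $c = b T_{-p,-h}(y) b^{-1}$, apply Lemma~\ref{lemma:spgen:short:alpha:beta:not:selfconj} with the triple $(-p,-l,-h)$ to obtain $c_{q,-h} \in \sigma_{q,-h}$, and unwind using the vanishing and unit hypotheses exactly as you describe. (Minor quibble: the lemma is invoked only once, not twice---the ``twice'' in your opening sentence refers to the two nested root-element constructions, which is how the paper proceeds as well.)
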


\begin{lemma}
    \label{lemma:spgen:jacobson:antidiag:strong}
	Assume $h(\nu) \geq (4,4)$. Let $(p,q,h,t,l)$ be an $A$-type base 
	quintuple and $a$ an element of $\Sp(2n,R)$ such that
	at least one of the following three conditions holds:
	\begin{enumerate}
		\item The elements $a_{-p,-t}, a_{-h,-t}, a_{pq},
		a_{hq}, a_{-p,q} a'_{t,-p}$ and $a_{-h,q} a'_{t,-h}$ are contained in the 
		Jacobson radical and the entries $a_{pp}, a_{qq}$ and $a_{-t,-t}$ are
		invertible.
		\item The rows $a_{p*}$, $a_{q*}$, $a_{h*}$ and the column $a_{*,-t}$
			coincide modulo the Jacobson radical with the corresponding
			rows and columns of the identity matrix.
		\item The rows $a_{-t,*}, a_{-h,*}, a_{-p,*}$ and columns $a_{*p}$
    		and $a_{*q}$ coincide modulo the Jacobson radical 
    		with the corresponding
			rows and columns of the identity matrix.
	\end{enumerate}
	If there exist $g_1, g_2 \in \Ep(\sigma, \Gamma)$ such that
	$g_1 a g_2 \in H$ then $a_{p,-t} \in \sigma_{p,-t}$.
	\begin{proof}
		Note that the first condition in the statement of this lemma
		trivially follows from any of the others. Consider the matrix
		\[
			b = a T_{pq}(-a_{pp}^{-1} a_{pq}).
		\]
		By the assumption that the entry $a_{pq}$ is contained in the Jacobson 
		radical $b \equiv a \mod J$. Clearly, $b_{pq} = 0$, 
		$b_{p,-t} = a_{p,-t}$ and $b_{qq}, b_{-t,-t} \in R^*$. Further, consider the
		matrix
		\[
			c = T_{hq}(-b_{hq} b_{qq}^{-1}) T_{-p,-t}(-b_{-p,-t} b_{-t,-t}^{-1}) 
			T_{-h,-t}(-b_{-h,-t} b_{-t,-t}^{-1}) T_{-l,-t}(-b_{-l,-t} b_{-t,-t}^{-1}) b.
		\]
		As $b \equiv a \mod J$, 
		we have $b_{hq}, b_{-p,-t}$ and $b_{-h,-t}$ in the Jacobson radical. Therefore
		$c_{i*} \equiv  a_{i*} \mod J$ whenever $i \neq t,-l$ (and also
		$c'_{*j} \equiv a'_{*j} \mod J$ whenever $j \neq l,-t$). In particular
		$c_{-p,q} c'_{t,-p}, c_{-h,q} c'_{t,-h} \in J$ and $c_{qq} \in R^*$.
		It is easy to see that $c_{pq} = c_{hq} = c_{-p,-t} = c_{-h,-t} = c_{-l,-t} = 0$.
		Finally, $g_3 c g_4 \in H$, where
		\begin{align*}
			g_3 &= g_1 \left( T_{hq}(-b_{hq} b_{qq}^{-1}) T_{-p,-t}(-b_{-p,-t} b_{-t,-t}^{-1})
			\times \right. \\
			&\quad\quad\;\; \left. 
			T_{-q,-t}(-b_{-q,-t} b_{-t,-t}^{-1}) T_{-l,-t}(-b_{-l,-t} b_{-t,-t}^{-1})
			\right)^{-1}, \\
			g_4 &= T_{pq}(a_{pp}^{-1} a_{pq}) g_2.
		\end{align*}
		Clearly, $g_3$ and $g_4$ are contained in $\Ep(\sigma, \Gamma)$.
		Therefore, $c$ satisfies the conditions of Lemma \ref{lemma:spgen:antidiag:weak}
		and it follows that $c_{p,-t} \in \sigma_{p,-t}$. It's only left to notice
		that $c_{p,-t} = a_{p,-t}$.
	\end{proof}
\end{lemma}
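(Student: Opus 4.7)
The plan is to reduce the lemma to Lemma \ref{lemma:spgen:antidiag:weak} via elementary row and column operations that zero out the five problem entries $a_{pq}, a_{hq}, a_{-p,-t}, a_{-h,-t}, a_{-l,-t}$ while preserving, modulo the Jacobson radical $J$, everything else that Lemma \ref{lemma:spgen:antidiag:weak} relies on. First I would observe that conditions (2) and (3) each imply (1): under (2), the rows $a_{p*}, a_{q*}, a_{h*}$ and the column $a_{*,-t}$ being congruent to identity rows and columns modulo $J$ force $a_{pp}, a_{qq}, a_{-t,-t}$ to be units and $a_{pq}, a_{hq}, a_{-p,-t}, a_{-h,-t}$ to lie in $J$; the product conditions then follow because $a_{-p,q}, a_{-h,q}$ are already in $J$. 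The case (3) is symmetric. So it suffices to work under hypothesis (1).

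For the column clean-up I would set $b = a \cdot T_{pq}(-a_{pp}^{-1} a_{pq})$. Since $a_{pq} \in J$ and $a_{pp}$ is a unit, the parameter lies in $J$, so $b \equiv a \pmod{J}$; in particular $b_{qq}, b_{-t,-t}$ remain units, every Jacobson-radical condition of (1) persists for $b$, and $b_{pq} = 0$, while $b_{p,-t} = a_{p,-t}$ since $T_{pq}$ only touches columns $q$ and $-p$. Next, for the row clean-up, I would define
\[
    c = T_{hq}(-b_{hq} b_{qq}^{-1}) \, T_{-p,-t}(-b_{-p,-t} b_{-t,-t}^{-1}) \, T_{-h,-t}(-b_{-h,-t} b_{-t,-t}^{-1}) \, T_{-l,-t}(-b_{-l,-t} b_{-t,-t}^{-1}) \, b.
\]
The first three parameters lie in $J$ (their numerators are congruent modulo $J$ to entries of $a$ that are already in $J$), while the fourth parameter may be arbitrary in $R$ but its transvection only disturbs rows $-l$ and $t$, which play no role in what follows. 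Since $(p,q,h,t,l)$ is an $\A$-type quintuple every factor has both indices in a single $\nu$-equivalence class, so the relevant $\sigma$-ideals equal $R$, each factor lies in $\Ep(\sigma, \Gamma)$, and absorbing the left product together with $T_{pq}(a_{pp}^{-1} a_{pq})$ into new $g_3, g_4 \in \Ep(\sigma, \Gamma)$ yields $g_3 c g_4 = g_1 a g_2 \in H$.

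Finally I would verify the remaining hypotheses of Lemma \ref{lemma:spgen:antidiag:weak} for $c$. By construction $c$ has zeros at the five target positions; rows $p, q, h, -p, -h$ are either untouched or modified only by elements of $J$, so $c_{qq}$ is a unit, $c_{p,-t} = a_{p,-t}$, and $c_{-p,q} \equiv a_{-p,q}$, $c_{-h,q} \equiv a_{-h,q} \pmod{J}$. Using the symplectic identity $c'_{ij} = \varepsilon_i \varepsilon_j c_{-j,-i}$ to rewrite the inverse-matrix entries as entries of $c$ proper, the products $c_{-p,q} c'_{t,-p}$ and $c_{-h,q} c'_{t,-h}$ become congruent modulo $J$ to $a_{-p,q} a'_{t,-p}$ and $a_{-h,q} a'_{t,-h}$, both in $J$ by (1). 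Lemma \ref{lemma:spgen:antidiag:weak} then gives $c_{p,-t} \in \sigma_{p,-t}$, and since $c_{p,-t} = a_{p,-t}$, we are done. The main obstacle is the bookkeeping of which entries each of the five transvections actually disturbs: the specific shape of the $\A$-type quintuple and the order of multiplications (column first, then rows $h$, $-p$, $-h$, $-l$) are exactly what keeps the crucial rows $p, q, h, -p, -h$ and column $-t$ safe.
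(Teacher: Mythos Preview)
Your proof is correct and follows essentially the same route as the paper's: reduce to condition (1), right-multiply by $T_{pq}(-a_{pp}^{-1}a_{pq})$ to kill the $(p,q)$ entry, then left-multiply by four transvections indexed by $(h,q)$, $(-p,-t)$, $(-h,-t)$, $(-l,-t)$ to kill the remaining four entries, and invoke Lemma~\ref{lemma:spgen:antidiag:weak}. Your bookkeeping of which rows are disturbed, and your use of the symplectic identity $c'_{ij}=\varepsilon_i\varepsilon_j c_{-j,-i}$ to handle the inverse-matrix entries, are sound.

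One small slip in the side remark: under condition (2) you claim the product conditions hold ``because $a_{-p,q}, a_{-h,q}$ are already in $J$'', but condition (2) constrains only rows $p,q,h$ and column $-t$, so nothing forces $a_{-p,q}$ or $a_{-h,q}$ into $J$. The products $a_{-p,q}a'_{t,-p}$ and $a_{-h,q}a'_{t,-h}$ still land in $J$, but for the opposite reason: $a'_{t,-p}=\pm a_{p,-t}$ and $a'_{t,-h}=\pm a_{h,-t}$ lie in $J$ (via row $p$, row $h$, or column $-t$). Your stated reasoning does apply verbatim to condition (3). This does not affect the main argument, which only uses condition (1).
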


\begin{lemma}
    \label{lemma:spgen:jacobson:not:selfconj}
	Assume $h(\nu) \geq (4,4)$. Let $(p,q,h,t)$ be an $A$-type base 
	quadruple and $a$ an element of $\Sp(2n, R)$ such that
	$a_{p,-h}, a_{q,-h}, a_{t,-h} \in \sigma_{p,-p} \cap J$, 
	$a_{-h,p} \in \sigma_{-h,p} \cap J$. Suppose $a_{qp} \in J$ and 
	$a_{pp}, a_{-h,-h} \in R^*$ and suppose that there exists an element $g \in \Ep(\sigma, \Gamma)$
	such that ${}^g a \in H$. Then $a_{ip} \in \sigma_{ip}$ for all $i \in I$. If 
	additionally $a \in \Sp(\sigma)$ then also $S_{-h,h}(a^{-1}) \in \Gamma_{-h}$.
	\begin{proof}
		Consider the matrix 
		\[
			b = T_{-h,p}(- a_{-h,p} a_{pp}^{-1}) a.
		\] As $a_{-h,p} \in J$ it follows that
		$b \equiv a \mod J$. Additionally $b_{p,-h}, b_{q,-h}$ and $b_{t,-h}$
		are contained in $\sigma_{p,-p}$ and $b_{-h,p} = 0$. Consider the matrix
		\[
			c = T_{p,-h}(- b_{p,-h} b_{-h,-h}^{-1}) 
			T_{q,-h}(- b_{q,-h} b_{-h,-h}^{-1})
			T_{t,-h}(- b_{t,-h} b_{-h,-h}^{-1}) b.
		\]
		Again, $c \equiv a \mod J$, in particular $c_{pp}, c_{-h,-h} \in R^*$. 
		As $c$ is a symplectic matrix, 
		it also follows that $c'_{-p,h} = 0$ and $c'_{hh} \in R^*$.
		It's easy to see that $c_{p,-h} = c_{q,-h} = c_{t,-h} = c_{-h,p} = 0$. 
		Finally, $g g_1 c g^{-1}$ is contained in $H$, where
		\begin{align*}
			g_1 &= \left( T_{p,-h}(-b_{-h,-h}^{-1} b_{p,-h}) 
			T_{q,-h}(-b_{-h,-h}^{-1} b_{q,-h}) \times \right. \\
			&\quad\quad\; \left.
			T_{t,-h}(-b_{-h,-h}^{-1} b_{t,-h}) 
			T_{-h,p}(-a_{pp}^{-1} a_{-h,p}) \right)^{-1} \in \Ep(\sigma, \Gamma).
		\end{align*}
		
		Pick an element $x \in S$ such that ${}^g T_{ph}(x) \in H$.
		Applying Lemma \ref{lemma:spgen:short:alpha:beta:not:selfconj}
		 to the matrix
		$c$, the short symplectic transvection $T_{ph}(x)$ and the $A$-type base
		triple $(p,q,h)$, we get for all $i \neq -p, -q$ the inclusion $c_{pp} (c T_{ph}(x) c^{-1})_{ih} \in \sigma_{ih}$.
		As $c_{pp}$ is invertible, we also get
		\begin{equation}
			\label{eq:lemma:spgen:jacobson:not:selfconj:c}
			\delta_{ih} + c_{ip} x c'_{hh} 
			- \varepsilon_p \varepsilon_h c_{i,-h} x
			c'_{-p,h} = (c T_{ph}(x) c^{-1})_{ih} \in \sigma_{ih}
		\end{equation}
		for all $i \neq -p,-q$. We can apply Lemma \ref{lemma:spgen:short:alpha:beta:not:selfconj}
		to the same matrix and transvection, but to a different $\A$-type base triple
		$(p,t,h)$ and get the inclusion \eqref{eq:lemma:spgen:jacobson:not:selfconj:c} also for
		$i = -q$. As $c'_{hh}$ is invertible and $c'_{-p,h} = 0$, 
		it follows from \eqref{eq:lemma:spgen:jacobson:not:selfconj:c}
		that $c_{ip} \in \sigma_{ih}$ for all $i \neq h,-p$. Observe that
		$a_{ip} = c_{ip}$ for all $i \neq p,q,t, -h$. Therefore $a_{ip} \in \sigma_{ip}$ for all 
		$i \neq p,q,t,-h,-p$. The inclusions $a_{ip} \in \sigma_{ip}$ for $i = p,q$ and $t$ are trivial and
		the corresponding inclusion for $i = -h$ is provided by the assumptions of the
		lemma. Therefore $a_{ip} \in \sigma_{ip}$ for all $i \neq -p$.
		
		Pick an element $y \in S$ such that ${}^g T_{pq}(y) \in H$
		and consider the matrix $d = T_{pq}(y) a$. Clearly, it satisfies all the conditions
		of this lemma. Indeed, $d_{p,-h} = a_{p,-h} + y a_{q,-h} \in \sigma_{p,-p} \cap J$,
		$d_{pp} = a_{pp} + y a_{qp} \in R^* + J \le R^*$ and the rest of the entries 
		of $d$ involved
		in the conditions of this lemma coincide with the corresponding entries of $a$
		itself. Thus we get the inclusions $d_{ip} \in \sigma_{ip}$ for all
		$i \neq -p$. In particular, $d_{-q,p} \in \sigma_{-q,p}$. It's only left to notice
		that $d_{-q,p} = a_{-q,p} - \varepsilon_p \varepsilon_q y a_{-p,p}$ and $a_{-q,p}$ 
		is already contained in $\sigma_{-q,p}$, while $y$ is invertible. Therefore 
		$a_{-p,p} \in \sigma_{-p,p}$.
		
		Assume $a \in \Sp(\sigma)$. By Lemma \ref{lemma:spgen:short:alpha:beta:not:selfconj} we get
		the inclusion $S_{-h,h}(c^{-1}) \in \Gamma_{-h}$. As $a^{-1} = c^{-1} g_1^{-1}$, by Corollary \ref{cor:symplss:length:of:left:mult:by:transv}
		we get $S_{-h,h}(a^{-1}) \in \Gamma_{-h}$.
	\end{proof}
\end{lemma}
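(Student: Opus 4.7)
The strategy mimics Lemmas \ref{lemma:spgen:antidiag:weak} and \ref{lemma:spgen:jacobson:antidiag:strong}: reduce $a$ by left-multiplication by elements of $\Ep(\sigma,\Gamma)$ to a matrix $c$ meeting the hypotheses of the short-root-element extraction Lemma \ref{lemma:spgen:short:alpha:beta:not:selfconj}, apply that lemma to $c$ and the root element $cT_{ph}(x)c^{-1}$ for a suitable $x \in S$, and transfer the resulting inclusions back to $a$. Concretely, I set $b = T_{-h,p}(-a_{-h,p}a_{pp}^{-1})\,a$; since $a_{-h,p} \in \sigma_{-h,p}\cap J$ and $a_{pp} \in R^*$, the transvection lies in $\Ep(\sigma,\Gamma)$, $b$ agrees with $a$ modulo $J$ entrywise, and $b_{-h,p} = 0$. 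Then I define $c = T_{p,-h}(\xi_p)\,T_{q,-h}(\xi_q)\,T_{t,-h}(\xi_t)\,b$ with $\xi_i = -b_{i,-h}b_{-h,-h}^{-1}$; by Proposition \ref{prop:symplss:change:indices} one has $\sigma_{i,-h} = \sigma_{p,-p}$ for $i \in \{p,q,t\}$, so every factor lies in $\Ep(\sigma,\Gamma)$. The resulting matrix $c$ satisfies $c_{p,-h} = c_{q,-h} = c_{t,-h} = c_{-h,p} = 0$, still has $c_{pp} = a_{pp} \in R^*$ and $c_{-h,-h} \in a_{-h,-h} + J \subseteq R^*$, and agrees with $a$ in every column-$p$ entry outside $\{p,q,t,h,-h,-p\}$. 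Writing $g_{\mathrm{left}}$ for the product of these four transvections, I set $g_1 = g\,g_{\mathrm{left}}^{-1}$ and $g_2 = g^{-1}$, so $g_1 c g_2 = {}^g a \in H$, and I pick $x \in S$ via Corollary \ref{cor:spgen:common:denominator} with ${}^g T_{ph}(x) \in H$.

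Two applications of Lemma \ref{lemma:spgen:short:alpha:beta:not:selfconj}, to the $A$-type triples $(p,q,h)$ and $(p,t,h)$, give $c_{pp}(cT_{ph}(x)c^{-1})_{ih} \in \sigma_{ih}$ for every $i \neq -p$. Since $c$ is symplectic and $c_{-h,p} = 0$, one has $c'_{-p,h} = -\varepsilon_p\varepsilon_h c_{-h,p} = 0$ and $c'_{hh} = c_{-h,-h} \in R^*$, so $(cT_{ph}(x)c^{-1})_{ih}$ collapses to $\delta_{ih} + c_{ip}\,x\,c'_{hh}$; combined with $\sigma_{ih} = \sigma_{ip}$ (Proposition \ref{prop:symplss:change:indices}, $h \sim p$) and the invertibility of $c_{pp}, x, c'_{hh}$, this yields $c_{ip} \in \sigma_{ip}$ for all $i \neq -p$. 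Translating back, $c_{ip} = a_{ip}$ for $i \notin \{p,q,t,h,-h,-p\}$, the cases $i \in \{p,q,t,h\}$ give the trivial inclusion $a_{ip} \in R = \sigma_{ip}$ (since those indices lie in $\nu(p)$), and $a_{-h,p} \in \sigma_{-h,p}$ is a hypothesis; only $i = -p$ remains. For this I use the standard trick: pick $y \in S$ with ${}^g T_{pq}(y) \in H$ and observe that $d = T_{pq}(y)\,a$ still satisfies the hypotheses of the present lemma, the main checks being $d_{pp} = a_{pp} + y\,a_{qp} \in R^* + J = R^*$ and $d_{p,-h} = a_{p,-h} + y\,a_{q,-h} \in \sigma_{p,-p}\cap J$. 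Applying the already-established part of the lemma to $d$ gives $d_{-q,p} \in \sigma_{-q,p}$; expanding $d_{-q,p} = a_{-q,p} - \varepsilon_p\varepsilon_q y\,a_{-p,p}$, using that $a_{-q,p} \in \sigma_{-q,p}$ is already known, $y \in R^*$, and $\sigma_{-q,p} = \sigma_{-p,p}$, forces $a_{-p,p} \in \sigma_{-p,p}$.

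For the length statement, if $a \in \Sp(\sigma)$ then $c \in \Sp(\sigma)$, and the length part of Lemma \ref{lemma:spgen:short:alpha:beta:not:selfconj} applied to $c$---after expanding the root-element length via Corollary \ref{cor:symplss:length:of:root:element}, using $c_{-h,p} = 0$, and dividing out the unit square $c_{pp}^2$ (allowed because $\sigma_{-h,-h}^{\rectangled{2}}\Gamma_{-h} \le \Gamma_{-h}$)---yields $S_{-h,h}(c^{-1}) \in \Gamma_{-h}$. Combined with $a^{-1} = c^{-1}\,g_{\mathrm{left}}$ and the right-multiplication clause of Corollary \ref{cor:symplss:length:of:left:mult:by:transv}, this transfers to $S_{-h,h}(a^{-1}) \in \Gamma_{-h}$. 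The principal difficulty lies in the careful bookkeeping of the two-step reduction $a \to b \to c$: simultaneously producing the required zero pattern in row $-h$ and column $-h$, keeping $c_{pp}$ and $c_{-h,-h}$ invertible throughout, and verifying that every coefficient of the reducing transvections truly lies in the prescribed component of $(\sigma,\Gamma)$.
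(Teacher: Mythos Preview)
Your proposal is correct and follows the paper's proof essentially step for step: the same two-stage reduction $a\to b\to c$ via $T_{-h,p}$ and then $T_{p,-h},T_{q,-h},T_{t,-h}$, the same application of Lemma~\ref{lemma:spgen:short:alpha:beta:not:selfconj} to $c$ and $T_{ph}(x)$ with the triples $(p,q,h)$ and $(p,t,h)$, the same use of $c'_{-p,h}=0$ and the invertibility of $c'_{hh}$ to isolate $c_{ip}$, the same $d=T_{pq}(y)a$ trick to recover $a_{-p,p}$, and the same length argument via Corollary~\ref{cor:symplss:length:of:left:mult:by:transv}. Your write-up is in fact slightly more explicit about which rows are affected and why each reducing transvection lies in $\Ep(\sigma,\Gamma)$.
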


We combine Lemmas \ref{lemma:spgen:jacobson:antidiag:strong}  and \ref{lemma:spgen:jacobson:not:selfconj} in the
following corollary.
\begin{corollary}
    \label{cor:spgen:jacobson:not:selfconj}
	Assume $h(\nu) \geq (4,4)$. Let $(p,q,h,t,l)$ be an $A$-type base 
	quintuple and $a$ an element of $\Sp(2n,R)$. Let $I'$ denote the 
	set $\{p,q,h,t\}$. Suppose that
	$a_{i*} \equiv e_{i*} \mod J$ and $a_{*,-i} \equiv e_{*,-i} \mod J$
	whenever $i \in I'$. Further, suppose that there exists an element $g \in \Ep(\sigma,\Gamma)$
	such that ${}^g a \in H$. Then $a_{ip} \in \sigma_{ip}$ for all $i \in I$.
	If additionally $a \in \Sp(\sigma)$ then also $S_{-p,p}(a^{-1}) \in \Gamma_{-p}$.
	\begin{proof}
		It's easy to see that the matrix $a$ satisfies condition (2) of 
		Lemma \ref{lemma:spgen:jacobson:antidiag:strong}. Thus we can conclude that 
		the entries $a_{p,-h},a_{q,-h}$ and $a_{t,-h}$ are contained in $\sigma_{p,-p}$.
		Moreover, the same entries are contained in the Jacobson radical by
		assumption. Since $a$ also satisfies the condition (3) of Corollary 
		\ref{lemma:spgen:jacobson:antidiag:strong},
		it follows that $a_{-h,p}$ is contained in $\sigma_{-h,p}$. Note that by assumption,
		$a_{pp}, a_{-h,-h} \in R^*$ and $a_{qp} \in J$. Summing up, $a$ satisfies
		the conditions of Lemma \ref{lemma:spgen:jacobson:not:selfconj}. Hence 
		$a_{ip} \in \sigma_{ip}$ for all $i \in I$. If $a \in \Sp(\sigma)$ then by 
		Lemma \ref{lemma:spgen:jacobson:not:selfconj} we get the inclusion 
		$S_{-h,h}(a^{-1}) \in \Gamma_{-h}$. Switching the indices $p$ and $h$ in the reasoning
		above, we get the required inclusion $S_{-p,p}(a^{-1}) \in S_{-p,p}$.
	\end{proof}
\end{corollary}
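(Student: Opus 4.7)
The plan is to combine Lemmas \ref{lemma:spgen:jacobson:antidiag:strong} and \ref{lemma:spgen:jacobson:not:selfconj}, using the fact that the hypothesis on rows and columns indexed by $I' = \{p,q,h,t\}$ is strong enough to verify the premises of both. The overall flow is: use Lemma \ref{lemma:spgen:jacobson:antidiag:strong} twice (in two different index configurations) to locate certain off-diagonal entries of $a$ in the appropriate ideals of the net, and then feed the resulting data into Lemma \ref{lemma:spgen:jacobson:not:selfconj} to conclude.

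First I would check that condition (2) of Lemma \ref{lemma:spgen:jacobson:antidiag:strong} holds for the original quintuple $(p,q,h,t,l)$, and also for the three permutations $(p,q,t,h,l)$, $(q,p,t,h,l)$, and $(t,p,q,h,l)$; each of these is still an $A$-type base quintuple. The respective conclusions give $a_{p,-h}, a_{q,-h}, a_{t,-h} \in \sigma_{p,-p}$ after collapsing all the $\sigma$-labels via Proposition \ref{prop:symplss:change:indices} (since $p,q,h,t$ are mutually $\nu$-equivalent, and hence so are their negatives). That these entries also lie in $J$ is immediate from the column hypothesis $a_{*,-h}\equiv e_{*,-h}\bmod J$.

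Next, to obtain $a_{-h,p}\in\sigma_{-h,p}$, I would apply condition (3) of the same lemma, but with the quintuple $(-h,-t,-q,-p,-l)$ (or $(-h,-t,-q,-p,l)$ if the class of $p$ is self-conjugate). By unitarity of $\nu$ this is again an $A$-type base quintuple; condition (3) applied to it asks for rows $a_{p,*}, a_{h,*}, a_{q,*}$ and columns $a_{*,-h}, a_{*,-t}$ to be congruent to the identity modulo $J$, all of which is part of the hypothesis. The remaining prerequisites of Lemma \ref{lemma:spgen:jacobson:not:selfconj} are read off directly: $a_{pp}, a_{-h,-h}\in 1+J\subseteq R^*$ and $a_{qp}\in J$. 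Invoking Lemma \ref{lemma:spgen:jacobson:not:selfconj} with the quadruple $(p,q,h,t)$ delivers $a_{ip}\in\sigma_{ip}$ for every $i\in I$, and, when $a\in\Sp(\sigma)$, also $S_{-h,h}(a^{-1})\in\Gamma_{-h}$. To promote the last inclusion to $S_{-p,p}(a^{-1})\in\Gamma_{-p}$, I would simply rerun the entire argument after swapping the roles of $p$ and $h$, which leaves the $A$-type base quintuple structure and all row/column hypotheses unaffected.

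The only real bookkeeping obstacle is keeping track of how the various permutations and sign-changes of the quintuple interact with self- versus non-self-conjugate equivalence classes; in the non-self-conjugate case one must systematically negate the auxiliary indices so that the permuted quintuples still consist of mutually $\nu$-equivalent elements. Everything else is essentially a careful reading of which row/column congruences are needed by each branch of Lemma \ref{lemma:spgen:jacobson:antidiag:strong}, and matching them with the symmetric hypothesis we have on $I'$.
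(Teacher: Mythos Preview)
Your proposal is correct and follows the same approach as the paper: apply Lemma~\ref{lemma:spgen:jacobson:antidiag:strong} (via conditions (2) and (3)) to obtain the entries $a_{p,-h}, a_{q,-h}, a_{t,-h} \in \sigma_{p,-p}$ and $a_{-h,p} \in \sigma_{-h,p}$, then invoke Lemma~\ref{lemma:spgen:jacobson:not:selfconj}, and finally swap $p$ and $h$ for the length inclusion. You are simply more explicit than the paper about the permutations of the quintuple needed for each application of Lemma~\ref{lemma:spgen:jacobson:antidiag:strong} (the paper glosses over this with ``it's easy to see''); your negated quintuple $(-h,-t,-q,-p,-l)$ for condition~(3) is exactly what is implicitly used, and the parenthetical about the self-conjugate case is unnecessary since $(-h,-t,-q,-p,-l)$ is always an $\A$-type base quintuple by unitarity of $\nu$.
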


\begin{lemma}
	\label{lemma:spgen:jac:extract:C}
	Assume $h(\nu) \geq (4,4)$. Let $(p,h)$ be a $C$-type base 
	pair and $a$ an element of $\Sp(2n,R)$ such that	
	$a_{pp} \in R^*$ and $a_{-h,-p} \in J$. If 
	there exists an element $g \in \Ep(\sigma, \Gamma)$ such that ${}^g a \in H$, then 
	$a_{ip} \in \sigma_{ip}$ for all $i \in I$.
	\begin{proof}
		Consider the matrix 
		\[
			b = T_{hp}((-a_{hp}+1) a_{pp}^{-1}) a.
		\] Clearly, $b_{pp} = a_{pp}$ is invertible and $b_{hp} = 1$.
		Pick using Corollary \ref{cor:spgen:common:denominator}
		an element $x \in S$ such that ${}^g T_{p,-p}(x^2) \in H$. By Lemma
		\ref{lemma:spgen:long:alpha:beta:selfconj} we get the inclusions
		\[
		 	b_{pp} (\delta_{i,-h} + b_{ip} x^2 b'_{-p,-h}) 
		  = b_{pp} (b T_{p,-p}(x^2) b^{-1})_{i,-h} \in \sigma_{i,-h}
		\]	
		for all $i \in I$. It's only left to notice that $b_{pp}$ and $x^2 b'_{-p,-h}$ are
		invertible and thus $b_{ip} \in \sigma_{i,-h}$ for all $i \in I$.
		 Finally, $a_{ip} = b_{ip}$ whenever $i \neq h,-p$.
		Thus $a_{ip} \in \sigma_{ip}$.
	\end{proof}
\end{lemma}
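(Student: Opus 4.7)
The plan is to reduce the problem to a direct application of Lemma \ref{lemma:spgen:long:alpha:beta:selfconj}, which extracts inclusions of the shape ``$a_{ps}\cdot(\text{entry of the root element})\in\sigma$'' from long root elements $a\,T_{s,-s}(\xi)\,a^{-1}$ based at a C-type pair. Since $(p,h)$ is already a C-type pair and we want to extract the column $a_{*p}$, the natural choice is $s=p$, so that the relevant coefficient $a_{ps}=a_{pp}$ is automatically invertible by hypothesis.

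The key computation to anticipate is that for any $x\in S$, the matrix $c=a\,T_{p,-p}(x^2)\,a^{-1}$ satisfies
\[
    c_{i,-h} \;=\; \delta_{i,-h} + a_{ip}\,x^2\,a'_{-p,-h},
\]
while the symplectic identity gives $a'_{-p,-h}=\varepsilon_p\varepsilon_h\,a_{hp}$. So if one knew $a_{hp}$ were a unit, invoking Lemma \ref{lemma:spgen:long:alpha:beta:selfconj} with C-type pair $(p,-h)$ would immediately yield $a_{ip}\in\sigma_{i,-h}=\sigma_{ip}$. The obstacle is that $a_{hp}$ is not assumed invertible; only $a_{pp}$ is.

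The fix is to pre-multiply $a$ by a short transvection that normalizes the $(h,p)$-entry to $1$. Setting $b=T_{hp}\bigl((1-a_{hp})a_{pp}^{-1}\bigr)\,a$, left-multiplication by $T_{hp}(\zeta)$ touches only rows $h$ and $-p$, so $b_{pp}=a_{pp}$ stays a unit, while a direct computation gives $b_{hp}=1$. Because $h\sim p$, the transvection $T_{hp}(\cdot)$ lies in $\Ep(\sigma,\Gamma)\le H$, hence ${}^g b=({}^g T_{hp}(\cdot))\,({}^g a)\in H$ as required. I would then pick $x\in S$ via Corollary \ref{cor:spgen:common:denominator} so that ${}^g T_{p,-p}(x^2)\in H$, and apply Lemma \ref{lemma:spgen:long:alpha:beta:selfconj} to $b$ with the C-type pair $(p,-h)$ and transvection $T_{p,-p}(x^2)$. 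This produces $b_{pp}\bigl(\delta_{i,-h}+b_{ip}x^2 b'_{-p,-h}\bigr)\in\sigma_{i,-h}$ for every $i$; since $b_{pp}$, $x^2$, and $b'_{-p,-h}=\varepsilon_p\varepsilon_h\,b_{hp}=\pm1$ are all units, this collapses to $b_{ip}\in\sigma_{i,-h}$.

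Finally, since $-h\sim p$ in the self-conjugate class of $p$, we have $\sigma_{i,-h}=\sigma_{ip}$, so $b_{ip}\in\sigma_{ip}$ for every $i$. For $i\neq h,-p$ the row operation left row $i$ untouched, hence $b_{ip}=a_{ip}$ and the desired inclusion $a_{ip}\in\sigma_{ip}$ follows. The two remaining cases $i\in\{h,-p\}$ are trivial because $h,-p\sim p$, so $\sigma_{hp}=\sigma_{-p,p}=R$. The main obstacle is really the first step: engineering an auxiliary matrix whose $(h,p)$-entry is a unit \emph{without} disturbing either the invertibility of the $(p,p)$-entry or the membership ${}^g(\cdot)\in H$; after that, everything is a bookkeeping application of the symplectic relations and of Lemma \ref{lemma:spgen:long:alpha:beta:selfconj}. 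Note that the hypothesis $a_{-h,-p}\in J$ does not seem to be needed in this direct argument and is presumably present only as a convenient reflection of the Jacobson-radical context in which this lemma will be applied later.
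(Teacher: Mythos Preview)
Your approach is essentially identical to the paper's: the same auxiliary matrix $b=T_{hp}((1-a_{hp})a_{pp}^{-1})\,a$, the same long root element $b\,T_{p,-p}(x^2)\,b^{-1}$, the same application of Lemma~\ref{lemma:spgen:long:alpha:beta:selfconj} with the $\C$-type pair $(p,-h)$, and the same unwinding via $b'_{-p,-h}=\pm b_{hp}=\pm1$.

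One small slip: in the general standard setting you do \emph{not} have $\Ep(\sigma,\Gamma)\le H$ (only $\Ep(\sigma',\Gamma')\le H$, and the argument $(1-a_{hp})a_{pp}^{-1}$ lives in $R$, not $R'$), so the sentence ``$T_{hp}(\cdot)\in\Ep(\sigma,\Gamma)\le H$, hence ${}^g b\in H$'' is not justified as written. This does not matter, though, because Lemma~\ref{lemma:spgen:long:alpha:beta:selfconj} only requires $g_1 b g_2\in H$ for some $g_1,g_2\in\Ep(\sigma,\Gamma)$; taking $g_1=g\,T_{hp}(\cdot)^{-1}$ and $g_2=g^{-1}$ gives $g_1 b g_2={}^g a\in H$. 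Your closing remark is also correct: the hypothesis $a_{-h,-p}\in J$ is not used in the paper's own proof either.
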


The following corollary is an illustration of application of Corollary \ref{cor:spgen:jacobson:not:selfconj} and 
Lemma \ref{lemma:spgen:jac:extract:C}. Suppose $R' = R$ and $S = \{ 1 \}$. Then it is clear that 
$(\sigma', \Gamma') = (\sigma, \Gamma)$ is the net associated with $H$ in $\Sp(2n,R)$. 
Corollary \ref{cor:spgen:jacobson:not:selfconj} together with Lemma \ref{lemma:spgen:jac:extract:C}
yield the following result.

\begin{corollary}
    \label{cor:spgen:jacobson:intersection:with:H}
    Let $R$ be a commutative associative unital ring with Jacobson radical $J$. Let $\nu$ be a unitary 
    equivalence relation on the index set $I$ such that $h(\nu) \geq (4,5)$. Let $H$ be a subgroup of
    $\Sp(2n,R)$ such that $\Ep(\nu,R) \le H$ and let $(\sigma, \Gamma)$ be the form net associated with
    $H$. Then 
    \[ 
        H \cap \Sp(2n,R,J) \le \Sp(\sigma, \Gamma),
    \]
    where $\Sp(2n, R, J)$ denotes the principal congruence subgroup of $\Sp(2n, R)$ of level $J$.
\end{corollary}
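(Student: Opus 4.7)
The plan is to specialize the lemmas of this section to the trivial standard setting $(R,R,\{1\})$, in which the $S$-associated form net and its $S$-closure both coincide with the form net $(\sigma,\Gamma)$ associated with $H$ in the sense of Lemma \ref{lemma:symplss:ass:net:is:a:net}. Fix $a \in H \cap \Sp(2n,R,J)$. Since $a \equiv e \mod J$, every row $a_{i*}$ and every column $a_{*,-i}$ coincides modulo $J$ with the corresponding row/column of $e$; in particular $a_{pp} \in 1 + J \subseteq R^*$ for every $p$. Taking $g = e \in \Ep(\sigma,\Gamma)$ gives ${}^g a = a \in H$. The goal $a \in \Sp(\sigma,\Gamma)$ is the conjunction of $a_{ij} \in \sigma_{ij}$ for all $i,j \in I$ and $S_{i,-i}(a) \in \Gamma_i$ for all $i \in I$; I will establish these in turn, column by column.

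For the first inclusion, fix $p \in I$. Since $h(\nu) \geq (4,5)$, the index $p$ lies either in an $A$-type base $5$-tuple $(p,q,h,t,l)$ (when $\nu(p)$ is non-self-conjugate) or in a $C$-type base pair $(p,h)$ (when $\nu(p)$ is self-conjugate, using $\lceil 4/2\rceil = 2$). In the first case, the hypotheses of Corollary \ref{cor:spgen:jacobson:not:selfconj} are immediate from $a \equiv e \mod J$, so $a_{ip} \in \sigma_{ip}$ for all $i$. In the second case, $a_{pp} \in R^*$ and $a_{-h,-p} \in J$, so Lemma \ref{lemma:spgen:jac:extract:C} applies and again delivers $a_{ip} \in \sigma_{ip}$ for all $i$. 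Letting $p$ range over $I$ yields $a \in \Sp(\sigma)$.

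With $a \in \Sp(\sigma)$ secured, I apply the same dichotomy a second time, now using the ``additionally'' clauses. For $p$ with non-self-conjugate class, Corollary \ref{cor:spgen:jacobson:not:selfconj} gives $S_{-p,p}(a^{-1}) \in \Gamma_{-p}$; for $p$ with self-conjugate class, $-p \sim p$ forces $\Gamma_{-p} = R$, so the inclusion is automatic. Running $p$ through $I$ produces $S_{i,-i}(a^{-1}) \in \Gamma_i$ for every $i$, whence $a^{-1} \in \Sp(\sigma,\Gamma)$. Since $\Sp(\sigma,\Gamma)$ is a group by Corollary \ref{cor:symplss:sp:sigma:gamma:is:a:group}, one concludes $a = (a^{-1})^{-1} \in \Sp(\sigma,\Gamma)$. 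I do not expect any serious obstacle: the argument is essentially a packaging of the preceding extraction lemmas in the trivial standard setting. The only minor subtlety is that the length statement delivered by Corollary \ref{cor:spgen:jacobson:not:selfconj} concerns $a^{-1}$ rather than $a$, which is precisely why the final step passes through the inverse.
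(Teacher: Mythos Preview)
Your proof is correct and follows exactly the route the paper indicates: the paper states that the corollary is obtained by combining Corollary~\ref{cor:spgen:jacobson:not:selfconj} with Lemma~\ref{lemma:spgen:jac:extract:C} in the trivial standard setting, and you have spelled out precisely this argument, including the small detour through $a^{-1}$ forced by the form of the length conclusion in Corollary~\ref{cor:spgen:jacobson:not:selfconj}.
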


\section{Over a local ring}
\label{sec:spgen:local}

In this section we will prove our main result, Theorem \ref{theorem:spgen:main}
for the case when the ground ring $R$ is local. Moreover, we will prove a version
of this result suitable for application in localization based proof of the next chapter.
Throughout this section fix a standard setting $(R, R', S)$, 
where $R$ is a commutative local ring. Let $J$ denote
the Jacobson radical of $R$ (which is the only maximal ideal of $R$). 
Further, fix a unitary equivalence relation $\nu$ on the index set $I$,
a subgroup $H$ of $\Sp(2n, R)$ and an exact major form net of ideals 
$(\sigma', \Gamma')$ which is $S$-associated with $H$. Let $(\sigma, \Gamma)$ denote the $S$-closure of
the $(\sigma', \Gamma')$ in $R$. In this section we will show
that
\[
    H \le \Transp_{\Sp(2n, R)}(\Ep(\sigma', \Gamma'), \Sp(\sigma, \Gamma))
\]
provided $h(\nu) \ge (4,5)$.

\begin{theorem}
    \label{theorem:spgen:mainforlocal}
  	Let $(R,R',S), \nu, H, (\sigma', \Gamma')$ and $(\sigma, \Gamma)$ be as above.
    Assume $h(\nu) \geq (4,5)$. Let $a$ be an element of $H$ and $T_{sr}(\xi)$ be a $(\sigma', \Gamma')$-elementary
    transvection (long or short). Then $a T_{sr}(\xi) a^{-1} \in \Sp(\sigma, \Gamma)$. In other words,
    \[
        H \le \Transp_{Sp(2n, R)}(\Ep(\sigma', \Gamma'),\Sp(\sigma, \Gamma)).
    \]
\end{theorem}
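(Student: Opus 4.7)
The plan is to verify directly that $b = a T_{sr}(\xi) a^{-1}$ satisfies the two defining conditions of $\Sp(\sigma, \Gamma)$: namely $b_{ij} \in \sigma_{ij}$ for every $i, j \in I$ and $S_{i,-i}(b) \in \Gamma_i$ for every $i$. The toolkit consists of the root-element extraction lemmas of Section \ref{sec:spgen:extract} together with the Jacobson-radical lemmas of Section \ref{sec:spgen:jac}; the local structure of $R$ dictates which tool applies in each case. Two preliminary observations simplify matters. First, $\Ep(\sigma, \Gamma) \le \Sp(\sigma, \Gamma)$, so replacing $a$ by $g_1 a$ with $g_1 \in \Ep(\sigma, \Gamma)$ conjugates $b$ by $g_1$ while preserving membership in $\Sp(\sigma, \Gamma)$, and Gauss-type row operations on $a$ within $\Ep(\sigma, \Gamma)$ do not affect the conclusion. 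Second, since $R$ is local with Jacobson radical $J$, every entry of $a$ is either a unit or lies in $J$, and invertibility of $a$ forces each row and each column to contain at least one unit.

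For the entry-wise inclusion I would fix a column index $j$ and split into two cases. If $T_{sr}(\xi)$ is long ($r = -s$, so the class of $s$ is self-conjugate), I would seek an index $p$ in the class of $j$ such that $(p, j)$ is a $C$-type base pair and $a_{ps} \in R^*$; when such a $p$ exists, Lemma \ref{lemma:spgen:long:alpha:beta:selfconj} yields $a_{ps} b_{ij} \in \sigma_{ij}$, and cancelling the unit concludes. When no such pivot exists---so that the entries of $a_{*s}$ indexed by the self-conjugate class of $j$ lie entirely in $J$---Lemma \ref{lemma:spgen:jac:extract:C} supplies the inclusion directly. If $T_{sr}(\xi)$ is short ($r \neq -s$), I would seek an $A$-type triple $(p, q, j)$ with $a_{ps}$ invertible and, before invoking Lemma \ref{lemma:spgen:short:alpha:beta:not:selfconj}, clear the entries $a_{p,-r}$ and $a_{q,-r}$ by left-multiplying $a$ with suitable transvections from $\Ep(\sigma, \Gamma)$, a pivoting step whose legitimacy was noted above. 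When a unit pivot is unavailable in the relevant class, the rows of $a$ indexed by a base tuple become congruent modulo $J$ to those of the identity, and Corollary \ref{cor:spgen:jacobson:not:selfconj} produces the missing inclusions.

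Once $b_{ij} \in \sigma_{ij}$ has been established for all pairs, the length inclusions $S_{i,-i}(b) \in \Gamma_i$ will follow from the second (\emph{in particular}-type) assertions of the same extraction lemmas applied to the same base tuples, whose hypothesis $b \in \Sp(\sigma)$ is precisely what has just been verified. The main obstacle is the combinatorial bookkeeping of the case analysis: for every position $(i, j)$ and every configuration of units versus Jacobson-radical entries of $a$, at least one extraction strategy must apply. The hypothesis $h(\nu) \geq (4, 5)$ is precisely calibrated to this---the ``$5$'' supplies the $A$-type quintuples demanded by Corollary \ref{cor:spgen:jacobson:not:selfconj}, while the ``$4$'' furnishes the $C$-type pairs and the auxiliary indices used by both extraction lemmas. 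A secondary subtlety is the constant tracking of denominators from the standard setting: every invocation of an extraction lemma requires some $x \in S$, and Corollary \ref{cor:spgen:common:denominator} is used systematically to amalgamate the finitely many denominators that arise in any single pass through the argument.
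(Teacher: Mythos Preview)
Your overall architecture matches the paper's: split into cases, use the root-element extraction lemmas of Section~\ref{sec:spgen:extract} when a unit pivot is available, and fall back on the Jacobson-radical lemmas of Section~\ref{sec:spgen:jac} when the relevant entries of $a$ lie in $J$. The paper organizes this as four lemmas (Lemmas \ref{lemma:spgen:short:local:selfconj}--\ref{lemma:spgen:long:local:not:selfconj}), one for each pair (type of the equivalence class of the target index $p$) $\times$ (long or short $T_{sr}$).

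There is, however, a genuine gap in your case split. You tie the choice of base tuple for the \emph{target} index $j$ to the type of the transvection $T_{sr}$: long $\Rightarrow$ seek a $\C$-type pair $(p,j)$; short $\Rightarrow$ seek an $\A$-type triple $(p,q,j)$. But the type of the class of $j$ is independent of whether $T_{sr}$ is long or short. If $T_{sr}$ is long and $j$ lies in a non-self-conjugate class, no $\C$-type pair containing $j$ exists, so Lemma~\ref{lemma:spgen:long:alpha:beta:selfconj} is unavailable; conversely, if $T_{sr}$ is short and $\nu(j)=\{\pm j,\pm k\}$ is self-conjugate of size $4$, no $\A$-type triple containing $j$ exists, so Lemma~\ref{lemma:spgen:short:alpha:beta:not:selfconj} is unavailable. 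These two ``crossed'' cases are precisely Lemmas~\ref{lemma:spgen:long:local:not:selfconj} and~\ref{lemma:spgen:short:local:selfconj} in the paper, and neither is routine: the first reduces to Corollary~\ref{cor:spgen:jacobson:not:selfconj} after elementary row operations, while the second requires a seven-step case analysis on which of $a_{\pm p,-r}, a_{\pm h,-r}, a_{\pm p,s}$ are units before Lemma~\ref{lemma:spgen:jac:extract:C} can be applied to $b$.

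A second, smaller point: even in the ``aligned'' short/$\A$-type case you sketch, the step ``clear the entries $a_{p,-r}$ and $a_{q,-r}$ by left-multiplying $a$'' presupposes a unit pivot in the column $a_{*,-r}$ restricted to the class of $p$. When no such pivot exists (all those entries lie in $J$), clearing is impossible and one must proceed differently; this is the substance of the multi-branch arguments in Lemmas~\ref{lemma:spgen:short:local:not:selfconj} and~\ref{lemma:spgen:short:local:selfconj}. Your outline names Corollary~\ref{cor:spgen:jacobson:not:selfconj} as the fallback, which is correct in spirit, but verifying its rather delicate hypotheses (four full rows and a column congruent to the identity modulo $J$) from ``no unit pivot in one column'' is itself most of the work.
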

\begin{proof}
    As $h(\nu) \geq (4,5)$ every index $p \in I$ embeds either into an $\A$-type base quintuple or
    into a $\C$-type base pair. Each of these two cases fork one more time depending on
    whether $T_{sr}(\xi)$ is long or short. We consider all four options separately in form of
    Lemmas \ref{lemma:spgen:short:local:selfconj}, \ref{lemma:spgen:long:local:selfconj},
    \ref{lemma:spgen:short:local:not:selfconj} and \ref{lemma:spgen:long:local:not:selfconj}.
\end{proof}
Now apply Theorem \ref{theorem:spgen:mainforlocal} to a trivial standard setting,
namely $R=R'$ and $S=\{1\}$. Togehter with Lemma \ref{lemma:symplss:ass:net:is:a:net} this yields Theorem
\ref{theorem:spgen:main} once the ground ring $R$ is local. 

\begin{lemma}
    \label{lemma:spgen:short:local:selfconj}
    Assume $h(\nu) \geq (4,4)$. Let $a$ be an element of $\Sp(2n,R)$ and 
    $T_{sr}(\xi)$ a short $(\sigma', \Gamma')$-elementary transvection. 
    Suppose that there exists an element $g \in \Ep(\sigma, \Gamma)$
    such that $ga \in H$. Let $b$ denote
    the short root element $a T_{sr}(\xi) a^{-1}$. If $(p,h)$ is a $\C$-type
    base pair then for all $i \in I$ the inclusion
    \begin{equation*}
         b_{ip} \in \sigma_{ip}
    \end{equation*}
    holds.
    \begin{proof}
        We will organize the analysis into seven steps.
            
        1. Assume that the elements $a_{-p,-r}, a_{p,-r}$ and one of the element $a_{-h,-r}$ or $a_{h,-r}$ is contained 
        in $J$. Without loss of generality, we may assume that $a_{-h,-r} \in J$.
        In this case it's easy to see that $b_{pp}$ is invertible and $b_{-h,-p} \in J$. Indeed,
        \begin{align*}
            b_{pp} &= 1 + a_{ps} \xi a'_{rp} - \varepsilon_s \varepsilon_r a_{p,-r} \xi a'_{-s,p}
                \in 1 + J \le R^*, \\
            b_{-h,-p} &= a_{-h,s} \xi a'_{r,-p} - \varepsilon_s \varepsilon_r a_{-h,-r} \xi a'_{-s,-p}
                \in J.          
        \end{align*}
        Therefore, the matrix  $b$ satisfies the conditions of Lemma \ref{lemma:spgen:jac:extract:C} and
        we get the inclusions $b_{ip} \in \sigma_{ip}$ for all $i \in I$.
                
        2. Assume that $a_{-p,-r} \in J$, but either $b_{h,-r} \in R^*$ or $b_{-h,-r} \in R^*$.
        We can assume that $a_{h,-r} \in R^*$, otherwise switch $h$ and $-h$.
        Consider the matrices
        \begin{align*}
           c &= T_{ph}(-a_{p,-r} a_{h,-r}^{-1}) a
           &d &= T_{-h,h}(-c_{-h,-r} b_{h,-r}^{-1}) d.
        \end{align*}
        It's easy to see that $d_{p,-r} = d_{-h,-r} = 0$ and by definition
        $g_1 d = a \in H$, where $g_1 \in \Ep(\sigma, \Gamma)$.
        Finally, $d_{-p,-r} = a_{-p,-r} \in J$. By the previous step
        we get the inclusion $f_{ip} \in \sigma_{ip}$ for all $i \in I$, where
        $f = d T_{sr}(\xi) d^{-1}$. It's only left to notice that $b_{ip} = f_{ip}$
        whenever $i \neq p, -h$ and the rest of the required inclusions are trivial.
        
        3. Assume that $a_{-p,-r}, a_{h,-r}, a_{-h,-r} \in J$, but $a_{p,-r} \in R^*$. Then by step 1
        we get $b_{ih} \in \sigma_{ih}$ for all $i \in I$. Now consider the matrix
        \[
            c = T_{hp}(1) a, \qquad f = c T_{sr}(\xi) c^{-1}.
        \]
        Observe that $c_{h,-r} \in R^*$ and $c_{-p,-r} = a_{-p,-r} \pm a_{-h,-r} \in J$. By step 2 we get
        $f_{ip} \in \sigma_{ip}$ for all $i \in I$. Finally, $f_{ip} = b_{ip} + b_{ih}$ whenever $i \neq h,-p$
        Therefore $b_{ip} \in \sigma_{ip}$ for all $i \in I$.
        
        \smallskip
        Steps 1-3 deal prove the lemma once $a_{-p,-r} \in J$. Note that as $T_{sr}(\xi) = T_{-r,-s}(\xi)$, 
        we can replace $r$ by $-s$ and $s$ by $-r$ in all the reasonings in steps 1-3. We can sum up
        the results of cases 1-3 together with the last observation as follows.
        
        4. Assume that $a_{-p,-r} \in J$ or $a_{-p,s} \in J$. Then $b_{ip} \in \sigma_{ip}$ for all $i \in I$.

        Before we continue with the opposite assumption, we will establish another version of step 1.
        
        5. Assume $a_{ps}, a_{p,-r}, a_{-h,-r} \in J$. Then as in case 1 we get
        \begin{align*}
            b_{pp} &= 1 + a_{ps} \xi a'_{rp} - \varepsilon_s \varepsilon_r a_{p,-r} \xi a'_{-s,p}
                \in 1 + J \le R^*, \\
            b_{-h,-p} &= a_{-h,s} \xi a'_{r,-p} - \varepsilon_s \varepsilon_r a_{-h,-r} \xi a'_{-s,p}
                = \pm a_{-h,s} \xi a_{p,-r} \pm a_{-h,-r} \xi a_{-ps}
                \in J.          
        \end{align*}
        Thus by Lemma \ref{lemma:spgen:jac:extract:C} we get the inclusions $b_{ip} \in \sigma_{ip}$ for all $i \in I$.
       
        The last two steps deal with the situation when $a_{-p,-r} \in R^*$.
        
        6. Assume that $a_{-p,-r}, a_{-p,s}, a_{ps} \in R^*$ and $a_{p,-r} \in J$. By case 4 we have $b_{i,-p} \in \sigma_{i,-p} 
        = \sigma_{ip}$ for all $i \in I$.
        Consider the matrices
        \[
            c = T_{-p,p}(-a_{-p,s} a_{ps}^{-1}) a, \qquad f = c T_{sr}(\xi) c^{-1}.
        \]
        Clearly, $c_{-p,s} = 0$ and it follows by case 4 that $f_{ip} \in \sigma_{ip}$ for all $i \in I$.
        Now observe that $f_{ip} = b_{ip} - a_{-p,s} a_{ps}^{-1} b_{i,-p}$ for all $i \neq -p$. 
        As $b_{i,-p} \in \sigma_{i,-p}$ we get
        $b_{ip} \in \sigma_{ip}$ for all $i \in I$.
        
        7. Finally, assume that $a_{-p,-r}, a_{-p,s} \in R^*$. Consider the matrices
        \[
            c = T_{-h,-p}(-a_{-h,-r} a_{-p,-r}^{-1}) a,
            \qquad d = T_{p,-p}(-b_{p,-r} a_{-p,-r}^{-1}) b,
            \qquad f = d T_{sr}(\xi) c^{-1}.
        \]
        It's easy to see that $d_{-h,-r} = 0$ and $d_{p,-r} = 0$. Further $d_{-p,s} = a_{-p,s} \in R^*$
        and $d_{-p,-r} = a_{-p,-r} \in R^*$.
        If $d_{ps} \in J$ then by step 5 we get $f_{ip} \in \sigma_{ip}$ for all $i \in I$. On the other hand
        if $d_{ps} \in R^*$ then by step 6 we have again $f_{ip} \in \sigma_{ip}$. It's only left to notice
        that $b_{ip} = f_{ip}$ whenever $i \neq -h, p$.
        
        In order to complete the proof note that the assumptions of steps 
        4 and 7 exhaust all the possibilities.
    \end{proof}
\end{lemma}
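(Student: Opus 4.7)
The plan is to apply Lemma \ref{lemma:spgen:jac:extract:C} to the short root element $b$ itself. That lemma has three hypotheses: the $\C$-type base pair $(p,h)$, which is given; the existence of $g_0 \in \Ep(\sigma,\Gamma)$ with ${}^{g_0} b \in H$; and the two \emph{local} conditions $b_{pp} \in R^*$ and $b_{-h,-p} \in J$. The existence of $g_0$ follows from $ga \in H$ together with Corollary \ref{cor:spgen:common:denominator}, applied to absorb the denominator needed so that $T_{sr}(\xi)$ is already seen in $H$ after conjugation by $g$. The whole difficulty therefore reduces to arranging the two local conditions.

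Expanding $b = a T_{sr}(\xi) a^{-1}$ using the symplectic identity $a'_{ij} = \varepsilon_i \varepsilon_j a_{-j,-i}$ shows that $b_{pp}-1$ is a short bilinear expression in the four entries $a_{\pm p,s}, a_{\pm p,-r}$, while $b_{-h,-p}$ is bilinear in $a_{-h,s}, a_{-h,-r}, a_{ps}, a_{p,-r}$. Since $R$ is local, each of these entries lies either in $R^*$ or in $J$, and this gives a finite case split. The clean base case is when $a_{-p,-r}, a_{p,-r}$ and at least one of $a_{\pm h,-r}$ all lie in $J$: every correction term in $b_{pp}$ is then in $J$, so $b_{pp} \in 1+J \subseteq R^*$, and $b_{-h,-p}$ becomes a $J$-linear combination, hence is in $J$. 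Lemma \ref{lemma:spgen:jac:extract:C} applies directly. A dual base case, obtained via the Steinberg identity $T_{sr}(\xi) = T_{-r,-s}(-\varepsilon_s\varepsilon_r\xi)$, covers the situation with the roles of $s$ and $-r$ swapped.

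All remaining configurations are reduced to a base case by left-multiplying $a$ by elementary matrices of the form $T_{ph}(\cdot), T_{hp}(\cdot), T_{-p,p}(\cdot), T_{-h,h}(\cdot), T_{-h,-p}(\cdot)$ or $T_{p,-p}(\cdot)$ with suitably chosen coefficients --- for instance $T_{ph}(-a_{p,-r} a_{h,-r}^{-1})$ kills $a_{p,-r}$ whenever $a_{h,-r}$ is a unit, and $T_{-p,p}(-a_{-p,s} a_{ps}^{-1})$ kills $a_{-p,s}$ whenever $a_{ps}$ is a unit. Any such left multiplication preserves the assumption ``some $\Ep(\sigma,\Gamma)$-multiple of $a$ lies in $H$'' and changes the associated short root element $f$ in the relevant column $f_{*p}$ only in a controlled set of rows, and only by terms already known to lie in $\sigma_{*p}$ --- either trivially, or by appeal to a previous subcase. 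The main obstacle will be the exhaustive bookkeeping: verifying, for each configuration of units and non-units among the six relevant entries of $a$, that the chosen row operations really push us into a strictly simpler situation, and that the relation between the new $f_{ip}$ and the original $b_{ip}$ only introduces terms in $\sigma_{ip}$. I expect roughly four to six distinct subcases beyond the two base cases, organized in increasing generality so that each one invokes the conclusion of a previously handled case.
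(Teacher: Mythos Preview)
Your proposal is correct and follows essentially the same strategy as the paper: reduce to Lemma \ref{lemma:spgen:jac:extract:C} via the base case $a_{-p,-r}, a_{p,-r}, a_{-h,-r} \in J$ (and its dual under $T_{sr}(\xi) = T_{-r,-s}(\pm\xi)$), then handle the remaining unit/non-unit configurations by left-multiplying $a$ with exactly the transvections you list to push the entries into $J$. One minor simplification: you do not need Corollary \ref{cor:spgen:common:denominator} to get ${}^{g}b \in H$, since $T_{sr}(\xi) \in \Ep(\sigma',\Gamma') \le H$ already, so ${}^{g}b = (ga)T_{sr}(\xi)(ga)^{-1} \in H$ directly.
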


\begin{lemma}
    \label{lemma:spgen:long:local:selfconj}
    Assume $h(\nu) \geq (4,4)$. Let $a$ be an element of $Sp(2n,R)$ such that $ga \in H$ for some $g \in \Ep(\sigma, \Gamma)$.
    Let $T_{s,-s}(\alpha)$ be a long $(\sigma', \Gamma')$-elementary transvection and $b$ 
    the long root element $a T_{s,-s}(\alpha) a^{-1}$. If $(p,h)$ is a $\C$-type
    base pair then for all $i \in I$ the inclusion
    \begin{equation*}
         b_{ip} \in \sigma_{ip}
    \end{equation*}        
    holds.
    \begin{proof}
        We will organize the analysis into four cases.
            
        1. Assume that $a_{ps} \in J$. Then it's easy to see that $b_{pp}$ is invertible
        and $b_{-h,-p} \in J$. Indeed,
        \begin{align*}
            b_{pp} &= 1 + a_{ps} \alpha a'_{-s,p} \in 1 + J \le R^*, \\
            b_{-h,-p} &= a_{-h,s} \alpha a'_{-s,-p} = \pm a_{-h,s} \alpha a_{ps} \in J.          
        \end{align*}
        Therefore, the matrix  $b$ satisfies the conditions of Lemma \ref{lemma:spgen:jac:extract:C} and
        we get the inclusions $b_{ip} \in \sigma_{ip}$ for all $i \in I$.
        
        2. Assume that $a_{hs}$ or $a_{-h,s}$ is invertible. Without loss of generality, assume the first,
        otherwise, simply switch $h$ and $-h$. Consider the matrix
        \[
            c = T_{ph}(-a_{ps}a_{hs}^{-1}) a.
        \]
        Clearly, $c_{ps} = 0 \in J$. Put $f = c T_{s,-s}(\alpha) c^{-1}$.
        We can apply the first case of this lemma to the pair of matrices $c$ and $f$
        in place of $a$ and $b$ and obtain the inclusions $f_{ip} \in \sigma_{ip}$ for all $i \in I$.
        It's only left to notice that $b_{ip} = f_{ip} \in \sigma_{ip}$ whenever $i \neq p,-h$ and
        the rest of the required inclusions are trivial as the form net $(\sigma, \Gamma)$ is major.
        
        3. Finally, assume that $a_{ps}$ is invertible, but $a_{hs} \in J$ and $a_{-h,s} \in J$.
        Switching $p$ and $h$ in the second case of this lemma we get $b_{ih} \in \sigma_{ih} = \sigma_{ip}$
        for all $i \in I$. Consider 
        \[
            c = T_{ph}(1) a, \qquad\qquad f = c T_{s,-s}(\alpha) c^{-1}.
        \]
        Clearly, $c_{ps} = a_{ps} + a_{hs} \in R^* + J \in R^*$ and $c_{hs} = a_{hs} \in J$.
        Thus, again by the second case we get $f_{ih} \in \sigma_{ip}$ for all $i \in I$.
        Finally, it's easy to see that $f_{ih} = b_{ih} + b_{ip}$ for all $i \neq p,-h$. As we already
        know that $b_{ih} \in \sigma_{ip}$. As $(\sigma, \Gamma)$ is major, this yields 
        the inclusion $b_{ip} \in \sigma_{ip}$ for all $i \in I$.
        
    \end{proof}
\end{lemma}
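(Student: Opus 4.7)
The strategy is to reduce everything to an application of Lemma~\ref{lemma:spgen:jac:extract:C} to the long root element $b$ in place of $a$. Since $b = aT_{s,-s}(\alpha)a^{-1} = e + \alpha\,a_{*s}\,a'_{-s,*}$, one has $b_{ij} = \delta_{ij} + \alpha a_{is}a'_{-s,j}$. Using the symplectic identity $a'_{-s,p} = \varepsilon_{-s}\varepsilon_p a_{-p,s}$ and $a'_{-s,-p} = \varepsilon_{-s}\varepsilon_{-p}a_{ps}$, both $b_{pp}-1$ and $b_{-h,-p}$ carry the factor $a_{ps}$. The hypothesis ${}^{g'}b \in H$ required by Lemma~\ref{lemma:spgen:jac:extract:C} is immediate with $g'=g$: indeed ${}^g b = (ga)\,T_{s,-s}(\alpha)\,(ga)^{-1}$, and both factors lie in $H$ since $T_{s,-s}(\alpha)$ is $(\sigma',\Gamma')$-elementary and therefore contained in $\Ep(\sigma',\Gamma') \le H$.

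I would split into three cases. \emph{Case 1}: $a_{ps} \in J$. Then $b_{pp} \in 1+J \subseteq R^*$ and $b_{-h,-p} \in J$, so Lemma~\ref{lemma:spgen:jac:extract:C} applies directly to $b$. \emph{Case 2}: one of $a_{hs}, a_{-h,s}$ is invertible; after possibly swapping $h$ with $-h$ (the pair $(p,-h)$ is still $\C$-type), assume $a_{hs} \in R^*$. Set $c = T_{ph}(-a_{ps}a_{hs}^{-1})a$, so $c_{ps} = 0 \in J$. Since $p \sim h$, one has $\sigma_{ph}=R$, so the auxiliary transvection lies in $\Ep(\sigma,\Gamma)$, and a suitable $g_1 \in \Ep(\sigma,\Gamma)$ satisfies $g_1 c = g a \in H$. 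Case~1 applied to $c$ yields $f_{ip} \in \sigma_{ip}$ for $f = cT_{s,-s}(\alpha)c^{-1} = T_{ph}(\lambda)\,b\,T_{ph}(-\lambda)$ with $\lambda = -a_{ps}a_{hs}^{-1}$. A direct computation of $T_{ph}(-\lambda)$ applied from the right shows that $(T_{ph}(-\lambda))_{jp} = \delta_{jp}$, hence $f_{ip} = (T_{ph}(\lambda)b)_{ip} = b_{ip}$ whenever $i \neq p,-h$; for the indices $p, h, -h, -p$ the desired inclusion is trivial, since each lies in the equivalence class of $p$ and hence $\sigma_{ip}=R$.

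\emph{Case 3}: $a_{ps} \in R^*$ while $a_{hs}, a_{-h,s} \in J$. Swapping the roles of $p$ and $h$ in Case~2 (the pair $(h,p)$ is still $\C$-type) gives $b_{ih} \in \sigma_{ih} = \sigma_{ip}$ for all $i$. Then let $c = T_{ph}(1)a$, so $c_{ps} = a_{ps}+a_{hs} \in R^*+J = R^*$ and $c_{hs} = a_{hs} \in J$. Applying Case~1 to $c$ with the pair $(h,p)$ yields $f_{ih} \in \sigma_{ih}$ for $f = T_{ph}(1)\,b\,T_{ph}(-1)$, and computing $(T_{ph}(-1))_{jh} = \delta_{jh} - [j=p]$ gives $f_{ih} = b_{ih} - b_{ip}$ for $i \neq p,-h$. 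Hence $b_{ip} = b_{ih}-f_{ih} \in \sigma_{ip}$, with the remaining four indices again trivial.

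The main obstacle is Case~3: no invertible pivot is directly available to clear $a_{ps}$ in one step, so one must first apply Case~2 ``from the other side'' to control the $h$-th column, then use the auxiliary $T_{ph}(1)$ to create a usable pivot $c_{ps}$ while transferring the non-invertibility to $c_{hs}$, and finally recombine the two extractions. Everything rests on the $\C$-type hypothesis, which forces every ideal indexed within $\{\pm p,\pm h\}$ to equal $R$ and hence places all auxiliary transvections in $\Ep(\sigma,\Gamma)$.
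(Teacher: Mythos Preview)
Your proposal is correct and follows essentially the same three-case argument as the paper. The only cosmetic difference is that in Case~3, after forming $c=T_{ph}(1)a$, you invoke Case~1 for the swapped pair $(h,p)$ (using $c_{hs}\in J$), whereas the paper phrases it as an application of Case~2; since Case~2 itself reduces to Case~1, this is the same step, and your sign computation $f_{ih}=b_{ih}-b_{ip}$ is in fact cleaner than the paper's $b_{ih}+b_{ip}$ (which is a harmless slip there).
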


\begin{lemma}
    \label{lemma:spgen:short:local:not:selfconj}
    Assume $h(\nu) \geq (4,4)$. Let $a$ be an element of $\Sp(2n,R)$ such that $g_1 a \in H$
    for some $g_1 \in \Ep(\sigma, \Gamma)$. Let
    $T_{sr}(\xi)$ be a short $(\sigma', \Gamma')$-elementary transvection and $b$ 
    the short root element $a T_{sr}(\xi) a^{-1}$. Let  
    $(p,q,h,t,l)$ be an $\A$-type base quintuple. Then the inclusion
    \begin{equation}
        \label{eq:lemma:spgen:short:local:not:selfconj:statement:1}
         b_{ip} \in \sigma_{ip}
    \end{equation}        
    holds for any $i \in I$. If additionally $b \in \Sp(\sigma)$ then also 
    \begin{equation}
        \label{eq:lemma:spgen:short:local:not:selfconj:statement:2}
        S_{-p,p}(b^{-1}) \in \Gamma_{-p}.
    \end{equation}
    \begin{proof}
        Denote by $I'$ the set $\{ p,q,h,t,l \}$. This proof is organized as follows.
        \begin{enumerate}
            \item We will show that if $a_{l,-r}$ is invertible then the inclusion 
                \eqref{eq:lemma:spgen:short:local:not:selfconj:statement:1} holds for any $i \in I$ and if additionally
                $b \in \Sp(\sigma)$ then also \eqref{eq:lemma:spgen:short:local:not:selfconj:statement:2} holds.                                          
            \item We will show that if there exists an index $i \in I'$ such that 
                $a_{i,-r}$ or $a_{is}$ is invertible then the inclusion                
                \eqref{eq:lemma:spgen:short:local:not:selfconj:statement:1} holds for any $i \in I$ and if
                additionally $b \in \Sp(\sigma)$ then also \eqref{eq:lemma:spgen:short:local:not:selfconj:statement:2}
                holds. This case can be reduced to the previous one.
            \item Finally, if $a_{i,-r}, a_{is} \in J$ for all $i \in I'$ then the
                required inclusions \eqref{eq:lemma:spgen:short:local:not:selfconj:statement:1} and 
                \eqref{eq:lemma:spgen:short:local:not:selfconj:statement:2} follow from Corollary
                \ref{cor:spgen:jacobson:not:selfconj}.
        \end{enumerate}                
        
        \begin{itemize}
            \item[1.] Suppose $a_{l,-r} \in R^*$. Let
                \begin{align*}
                    g &= T_{tl}(a_{t,-r} a_{l,-r}^{-1}) T_{hl} (a_{h,-r} a_{l,-r}^{-1}) T_{ql} (a_{q,-r} a_{l,-r}^{-1})
                        T_{pl}(a_{p,-r} a_{l,-r}^{-1}), \\
                    c &= g^{-1} a \quad\text{and} \\
                    d &= c T_{sr}(\xi) c^{-1} = {}^{g^{-1}} b.    
                \end{align*}
                It is easy to see that $c_{p,-r} = c_{q,-r} = c_{h,-r} = 
                c_{t,-r} = 0$ and if $b \in \Sp(\sigma)$ then also $d \in \Sp(\sigma)$.
                We will consider three subcases:
                \begin{enumerate}[i.]
                   \item There is an index  $i_1 \in I' \setminus \{p, l\}$ such that $c_{i_1,s}$ is invertible.
                   \item The only $i_1 \in I' \setminus \{ l \}$ such that $c_{i_1,s} \in R^*$ is
                       $i_1 = p$.
                   \item $c_{is} \in J$ for all $i \in I' \setminus \{ l \}$ .
               \end{enumerate}     

            For each of the cases (1.i)--(1.iii) we will prove that $d_{ip} \in \sigma_{ip}$ for all $i \in I$
            and if $b \in \Sp(\sigma)$ then $S_{-p,p}(d^{-1}) \in \Gamma_{-p}$. 
            Note that 
            $d_{ip} = b_{ip}$ for all $i \neq p,q,h,t,-l$, the inclusion $b_{ip} \in \sigma_{ip}$
            for $i = p,q,h,t$ is trivial and
            \[ 
                d_{-l,p} = b_{-l,p} \pm \zeta_p b_{-p,p} \pm \zeta_q b_{-q,p} \pm \zeta_h b_{-h,p}
                \pm \zeta_t b_{-t,p},
            \]
            where $\zeta_i \in R$. As $b_{-i,p} \in \sigma_{-p,p}$ for $i = p,q,t,h$ it follows that
            $b_{-l,p} \in \sigma_{-l, p}$. Summing up, $b_{ip} \in \sigma_{ip}$ for all $i \in I$.
            
            Assume that $b \in \Sp(\sigma)$. By Corollary
            \ref{cor:symplss:length:of:left:mult:by:transv} we get
            \[
                S_{-p,p}(b^{-1}) \equiv S_{-p,p}({}^{g^{-1}} b^{-1}) \equiv S_{-p,p}(d^{-1}) \mod \Gamma_{-p}.
            \]
            Hence,
            $S_{-p,p}(b^{-1}) \in \Gamma_{-p}$. This completes the analysis of the case 1. 
            Now we consider the cases (1.i)--(1.iii).
               
            \item[1.i.] Suppose there exists an index $i_1 \in I' \setminus \{p, l \}$ such that $c_{i_1, s} \in  R^*$.
            In this situation we can apply Lemma \ref{lemma:spgen:short:alpha:beta:not:selfconj} to
            the matrix $c$, a short $(\sigma', \Gamma')$-elementary transvection $T_{sr}(\xi)$ and 
            the $\A$-type base triple $(i_1, i_2, p)$, where $i_2$ can be chosen in $I'\setminus\{p,l,i_1\}$.
            Thus we get $c_{i_1,s} d_{ip} \in 
            \sigma_{ip}$ for all $i \neq -i_1$ and if $b \in \Sp(\sigma)$ then also
            $c_{i_1,s}^2 S_{-p,p}(d^{-1}) \in \Gamma_{-p}$. 
            As $c_{i_1,s}$ is invertible, it follows that $d_{ip} \in \sigma_{ip}$ for all 
            $i \neq -i_1$ and if $b \in \Sp(\sigma)$ then $S_{-p,p}(d^{-1}) \in \Gamma_{-p}$. 
            
            Pick an index $i_2 \in I' \setminus \{p, l, i_1 \}$. If $c_{i_2,s}$ is invertible in $R$
            we can replace $i_1$ in the reasoning above with $i_2$ and get the missing inclusion
            $d_{-i_1, p} \in \sigma_{-i_1,p}$. If $c_{i_2,s}$ is not invertible, consider the matrices $f = T_{i_1, i_2}(1) c$
            and $g = f T_{sr}(\xi) f^{-1} = {}^{T_{i_1, i_2}(1)} d$. Clearly
            $f_{p,-r} = f_{q,-r} = f_{h,-r} = f_{t,-r} = 0$ and $
            f_{i_1,s}, f_{i_2,s} \in R^*$. Moreover ${}^{g T_{i_1, i_2}(-1)} b \in H$. Therefore
            we deduce by Lemma \ref{lemma:spgen:short:alpha:beta:not:selfconj} that
            $f_{i_1,s} g_{-i_2, p} \in \sigma_{-p,p}$ and thus $g_{-i_2, p} \in \sigma_{-p,p}$.
            Finally $g_{-i_2, p} = d_{-i_2, p} + d_{-i_1, p}$ and, as 
            $d_{-i_2, p} \in \sigma_{-p,p}$, it follows also that $d_{-i_1, p} \in \sigma_{-p,p}$. Therefore
            $d_{ip} \in \sigma_{ip}$ for all $i \in I$ and if $b \in \Sp(\sigma)$ then also 
            $S_{-p,p}(d^{-1}) \in \Gamma_{-p}$. 
            
            \item[1.ii.] Suppose $c_{hs}, c_{qs}, c_{ts} \in J$, but $c_{ps} \in R^*$.
            By the case (1.i) the inclusion $d_{ih} \in \sigma_{ih}$ holds for any $i \in I$ and 
            if $b \in \Sp(\sigma)$ then also $S_{-h,h}(d^{-1}) \in \Gamma_{-h}$. Consider
            the matrices $f = T_{hp}(1) c$ and $g = {}^{T_{hp}(1)} d$. Then
            ${}^{g T_{hp}(-1)} b \in H$, $f_{p,-r} = f_{q,-r} = f_{h,-r} = f_{t,-r} = 0$
            and $f_{hs} \in R^*$. By case (1.i) we get $g_{ip} \in \sigma_{ip}$ for
            all $i \in I$ and if $b \in \Sp(\sigma)$ then also
             $S_{-p,p}(g^{-1}) \in \Gamma_{-p}$. Observe that
            $g_{ip} = d_{ip} + d_{ih}$ for all $i \neq h,-p$ and, as $d_{ih}$ is already
            contained in $\sigma_{ip}$ for all $i \in I$, we get $d_{ip} \in \sigma_{ip}$ for all $i \neq -p$.
            Finally
            \begin{equation}
                \label{eq:lemma:spgen:short:local:not:selfconj:1}
                g_{-p,p} = d_{-p,p} \pm d_{-h,p} \pm d_{-p,h} \pm d_{-h,h}
            \end{equation}
            and, as $d_{-h,p} = \pm d'_{-p,h} = \pm d_{-p,h}$, the last
            three summands in \eqref{eq:lemma:spgen:short:local:not:selfconj:1}
            are contained in $\sigma_{-p,p}$. Thus $d_{-p,p} \in \sigma_{-p,p}$. 
            If $b \in \Sp(\sigma)$ then 
            by Corollary \ref{cor:symplss:length:of:left:mult:by:transv} we have
            \[
                S_{-p,p}(g^{-1}) = S_{-p,p}({}^{T_{hp}(1)} d^{-1})
                \equiv S_{-p,p}(d^{-1}) + S_{-h,h}(d^{-1}) \mod \Gamma_{-p}.
            \]
            As $S_{-h,h}(d^{-1}) \in \Gamma_{-p}$, we get $S_{-p,p}(d^{-1}) \in \Gamma_{-p}$.
            
            \item[1.iii.] Suppose $c_{is}, c_{i,-r} \in J$ for all $i \in I' \setminus \{ l \}$.
            Then $b_{ij} \equiv \delta_{ij} \mod J$ whenever $i \in I' \setminus \{ l \}$ or
            $-j \in I' \setminus \{ l \}$. By Corollary \ref{cor:spgen:jacobson:not:selfconj}
            we get the required inclusions $d_{ip} \in \sigma_{ip}$ for all $i \in I$ and
            if $b \in \Sp(\sigma)$ then also            
            $S_{-p,p}(d^{-1}) \in \Gamma_{-p}$.
            
            \item[2.] Suppose $a_{l,-r} \in J$, but there still exists an index $i_1 \in I'$ such that
            $a_{i_1,-r} \in R^*$ or $a_{i_1,s} \in R^*$. First, assume $a_{i_1, -r} \in R^*$.
            By case 1 we have $b_{il} \in \sigma_{il}$ for all $i \in I$ and if 
            $b \in \Sp(\sigma)$ then also $S_{-l,l}(b^{-1}) \in \Gamma_{-l}$.
            Consider the matrices $c = T_{l,i_1}(1) a \in H$ and $d = {}^{T_{l,i_1}(1)} b$.
            Then $c_{l,-r} \in R^*$ and by case 1 we get $d_{ip} \in \sigma_{ip}$ and
            if $b \in \Sp(\sigma)$ then also
            $S_{-p,p}(d^{-1}) \in \Gamma_{-p}$. Note that if $a \in \Sp(\sigma)$ we have
            \[
                S_{-p,p}(d^{-1}) \equiv S_{-p,p}(T_{l,i_1}(1) b^{-1}) \equiv
                S_{-p,p}(b^{-1}) + \delta_{p,i_1} S_{-l,l}(b^{-1}) \mod \Gamma_{-p}.
            \]
            Recall that $S_{-l,l}(b^{-1}) \in \Gamma_{-l} = \Gamma_{-p}.$
            Therefore $S_{-p,p}(b^{-1}) \in \Gamma_{-p}$. As
            $d_{ip} = b_{ip} + \delta_{i_1,p} b_{il}$ for all $i \neq l,-i_1$, it follows that $b_{ip} \in \sigma_{ip}$
            for all $i \neq -i_1$. Finally we have
            \begin{equation}
                \label{eq:lemma:spgen:short:local:not:selfconj:2}
                d_{-i_1,p} = b_{-i_1, p} \pm b_{-l,p} + \delta_{i_1,p} b_{-i_1,l} \pm \delta_{i_1,p} b_{-l,l}.
            \end{equation}
            Note that $b_{-l,p} = \pm b_{-p,l}$. Thus, the last three summands in 
            \eqref{eq:lemma:spgen:short:local:not:selfconj:2} are contained in $\sigma_{-p,p}$. 
            Therefore $b_{-i_1, p} \in \sigma_{-p,p}$. We conclude that
            $b_{ip} \in \sigma_{ip}$ for all $i \in I$ and if $b \in \Sp(\sigma)$ then also
            $S_{-p,p}(b^{-1}) \in \Gamma_{-p}$.
            
            Finally, if $a_{i_1, s} \in R^*$ we can use the Steinberg relation (R1), namely 
            $T_{sr}(\xi) = T_{-r,-s}(\pm \xi)$. Set $d = a T_{-r,-s}(\xi) a^{-1}$. We have already shown that in this case 
            $d_{ip} \in \sigma_{ip}$ for all $i \in I$ and
            $S_{-p,p}(d^{-1}) \in \Gamma_{-p}$. Finally $b_{ip} = \pm d_{ip}$ for all $i \neq p$
            and if $b \in \Sp(\sigma)$ then $S_{-p,p}(d^{-1}) = S_{-p,p}(b^{-1}) \pm 2 b_{-p,p} (2-b_{-p,-p}) \equiv
            S_{-p,p}(b^{-1}) \mod \Gamma_{-p}$. Therefore $S_{-p,p}(b^{-1}) \in \Gamma_{-p}$.
            
            \item[3.] Suppose $a_{is}, a_{i,-r} \in J$ for all $i \in I'$. Exactly
            as in case (1.iii) we get all the required inclusions via Corollary
            \ref{cor:spgen:jacobson:not:selfconj}.
        \end{itemize}
        This completes the proof.
    \end{proof}
\end{lemma}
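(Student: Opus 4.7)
The strategy is to reduce this lemma to the two extraction tools already established: Lemma \ref{lemma:spgen:short:alpha:beta:not:selfconj}, which extracts transvections from a short root element under the hypothesis that certain $(i, -r)$-entries of $a$ vanish; and Corollary \ref{cor:spgen:jacobson:not:selfconj}, which handles the degenerate situation where the relevant rows and columns of $a$ indexed by the quintuple coincide with those of the identity modulo the Jacobson radical $J$. Because $R$ is local, every entry of $a$ is either a unit or lies in $J$, so a finite case analysis on which entries of the columns $a_{*s}$ and $a_{*,-r}$---restricted to the rows indexed by $I' = \{p,q,h,t,l\}$---are units will exhaust all possibilities.

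The distinguished case is when $a_{l,-r}$ is a unit. I would left-multiply $a$ by a product of four short transvections, one for each $i \in \{p,q,h,t\}$, designed to clear $a_{i,-r}$ using $a_{l,-r}$ as a pivot. The resulting matrix $c$ then satisfies $c_{p,-r} = c_{q,-r} = c_{h,-r} = c_{t,-r} = 0$, exactly the hypothesis needed for Lemma \ref{lemma:spgen:short:alpha:beta:not:selfconj}. Within this case I split once more on whether some $c_{i_1,s}$ is a unit for $i_1 \in I' \setminus \{p, l\}$: if so, applying Lemma \ref{lemma:spgen:short:alpha:beta:not:selfconj} to the $A$-type triple $(i_1, i_2, p)$ yields $c_{i_1,s} d_{ip} \in \sigma_{ip}$ for $d = c T_{sr}(\xi) c^{-1}$, and invertibility of $c_{i_1,s}$ finishes the argument for every $i$ except $i = -i_1$; this missing row is recovered either by re-running the lemma with a second candidate index in place of $i_1$, or by an auxiliary conjugation by $T_{i_1, i_2}(1)$. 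If only $c_{ps}$ is a unit, a preliminary conjugation by $T_{hp}(1)$ moves the unit to the $h$-row and reduces to the previous subcase. If instead all $c_{i,s}$ for $i \in I' \setminus \{l\}$ lie in $J$, then $c$ already satisfies the hypotheses of Corollary \ref{cor:spgen:jacobson:not:selfconj}.

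The remaining configurations reduce to the distinguished case. If $a_{l,-r} \in J$ but some $a_{i_1,-r}$ is a unit for $i_1 \in I' \setminus \{l\}$, I would left-multiply by $T_{l, i_1}(1)$ to transfer a unit into the $(l,-r)$ slot, invoke the distinguished case on the new matrix, and transport the conclusion back to $b$ using Corollary \ref{cor:symplss:length:of:left:mult:by:transv} to control how $S_{-p,p}$ is perturbed. The symmetric situation in which some $a_{i_1, s}$ is a unit follows by applying the Steinberg relation $T_{sr}(\xi) = T_{-r,-s}(\pm \xi)$ to interchange the roles of $s$ and $-r$. Finally, if every $a_{is}$ and $a_{i,-r}$ for $i \in I'$ lies in $J$, then the rows $a_{i*}$ and columns $a_{*,-i}$ for $i \in I'$ look like those of the identity modulo $J$, and Corollary \ref{cor:spgen:jacobson:not:selfconj} delivers both conclusions simultaneously.

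The hard part will be the bookkeeping at the boundary indices and for the length statement. After each application of Lemma \ref{lemma:spgen:short:alpha:beta:not:selfconj} one index---typically $-i_1$---is excluded from the conclusion, forcing an auxiliary reduction in every subcase to catch that last entry. More importantly, the second assertion $S_{-p,p}(b^{-1}) \in \Gamma_{-p}$ must be threaded through every intermediate left multiplication: the congruences supplied by Corollary \ref{cor:symplss:length:of:left:mult:by:transv} introduce error terms a priori only in $\sigma_{-p,p}$, and one must verify in each case that these actually lie in $\Gamma_{-p}$. Managing these error terms without overlooking any of the symmetric subcases is where the real care is required.
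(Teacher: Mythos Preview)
Your plan is correct and matches the paper's proof essentially line for line: the same three-way split on whether $a_{l,-r}$ is a unit, the same pivoting by the four transvections $T_{il}(\cdot)$ to clear the $(i,-r)$ entries, the same three subcases on $c_{i_1,s}$ (including the $T_{hp}(1)$ trick when only $c_{ps}$ is a unit and the fallback to Corollary~\ref{cor:spgen:jacobson:not:selfconj} when all are in $J$), and the same reductions via $T_{l,i_1}(1)$ and the relation $T_{sr}(\xi)=T_{-r,-s}(\pm\xi)$ in the remaining cases. Your concern about the $\Gamma_{-p}$ error terms is in fact a non-issue: every auxiliary transvection you introduce is between indices in the same $\nu$-class, hence is $(\sigma,\Gamma)$-elementary, so Corollary~\ref{cor:symplss:length:of:left:mult:by:transv} already gives the length congruences modulo $\Gamma_{-p}$ rather than merely $\sigma_{-p,p}$.
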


\begin{lemma}
    \label{lemma:spgen:long:local:not:selfconj}
    Assume $h(\nu) \geq (4,4)$. Let $a$ be an element of $\Sp(2n,R)$ such that
    $g a \in H$ for some $g \in \Ep(\sigma, \Gamma)$. Let 
    $T_{s,-s}(\alpha)$ be a long $(\sigma', \Gamma')$-elementary transvection and $b$ 
    the long root element $a T_{s,-s}(\alpha) a^{-1}$. Let  
    $(p,q,h,t,l)$ be an $\A$-type base quintuple. Then the inclusion
    \begin{equation}
        \label{eq:lemma:spgen:long:local:not:selfconj:statement:1}
         b_{ip} \in \sigma_{ip}
    \end{equation}        
    holds for any $i \in I$. If additionally $b \in \Sp(\sigma)$ then also 
    \begin{equation}
        \label{eq:lemma:spgen:long:local:not:selfconj:statement:2}
        S_{-p,p}(b^{-1}) \in \Gamma_{-p}.
    \end{equation}
    \begin{proof}
        The proof is based on application of Corollary \ref{cor:spgen:jacobson:not:selfconj}.
        Recall that 
        \begin{equation}
            \nonumber
            b_{ij} = \delta_{ij} + a_{is} \alpha a'_{-s,j} = \delta_{ij} \pm a_{is} \alpha a_{-j,s}.
        \end{equation}
        Thus
        \begin{equation}
            \label{eq:lemma:spgen:long:local:not:selfconj:b:J}
            b_{ij} \equiv \delta_{ij} \text{ whenever } a_{is} \in J \text{ or } a_{-j,s} \in J.
        \end{equation}
        Set $I' = \{p,q,h,t,l\}$.
        
        1. First we will show that once there exists an index $j$ in the set $I'\setminus\{p\}$ such that 
        $a_{js}$ is invertible we can make sure that $b$ satisfies the conditions of Corollary 
        \ref{cor:spgen:jacobson:not:selfconj}, maybe after multiplying $a$ on the left with
        an element of $\Ep(\sigma, \Gamma)$. Without loss of generality, assume that $j = l$.
        Consider the matrices
        \begin{align*}
            g_1 &= T_{pl}(-a_{ps} a_{ls}^{-1}) T_{ql}(-a_{qs} a_{ls}^{-1}) 
                T_{hl}(-a_{hs} a_{ls}^{-1}) T_{tl}(-a_{ts} a_{ls}^{-1}), \\
            c &= g_1 a, \\
            f &= c T_{s,-s}(\alpha) c^{-1} = g_1 b g_1^{-1}.
        \end{align*}
        Clearly, $a_{ps} = a_{qs} = a_{ts} = a_{hs} = 0 \in J$. This yields by 
        \eqref{eq:lemma:spgen:long:local:not:selfconj:b:J} that $b_{ij} \equiv \delta_{ij} \mod J$
        whenever $i \in I'\setminus\{l\}$ or $-j \in I'\setminus\{l\}$. It follows by Corollary \ref{cor:spgen:jacobson:not:selfconj}
        that $f_{ip} \in \sigma_{ip}$ for all $i \in I$. Further, if $b \in \Sp(\sigma)$ then
        $f \in \Sp(\sigma)$ and by Corollary \ref{cor:spgen:jacobson:not:selfconj} also
        $S_{-p,p}(f^{-1}) \in \Gamma_{-p}$. Finally, using Corollary \ref{cor:symplss:length:of:left:mult:by:transv}
        we infer that $S_{-p,p}(b^{-1}) = S_{-p,p}(g_1^{-1} f^{-1} g_1) \equiv S_{-p,p}(f^{-1}) \mod \Gamma_{-p}$.
        Therefore $S_{-p,p}(b^{-1}) \in \Gamma_{-p}$.
        
        2. Next, assume that $a_{ps}$ is invertible, but $a_{qs}, a_{hs}, a_{ts}$ and $a_{ls}$ are contained in $J$.
        It follows by case 1 that $b_{il} \in \sigma_{il}$ and if $b \in \Sp(\sigma)$ then $S_{-l,l}(b^{-1}) \in \Gamma_{-p}$.
        Consider the matrices
        \[
            c = T_{lp}(1) a, \qquad f = c T_{s,-s}(\alpha) c^{-1} = {}^{T_{lp}(1)} b.
        \]
        Clearly, $c_{ls} = a_{ls} + a_{ps} \in R^*$ and we can apply case 1 to the matrices $c$ and $f$ instead of
        $a$ and $b$. Thus we get $c_{ip} \in \sigma_{ip}$ for all $i \in I$ and if $b \in \Sp(\sigma)$ (and thus also
        $f \in \Sp(\sigma)$) then also $S_{-p,p}(f^{-1}) \in \Gamma_{-p}$. It's only left to notice that
        $f_{ip} = b_{ip} + b_{il}$ and by Corollary \ref{cor:symplss:length:of:left:mult:by:transv} we have
        $S_{-p,p}(f^{-1}) = S_{-p,p}(T_{lp}^{-1}(1) b^{-1} T_{lp}(1)) \equiv S_{-p,p}(b^{-1}) + S_{-l,l}(b^{-1})$
        whenever $b \in \Sp(\sigma)$.
        We have already noticed that $b_{il} \in \sigma_{il}$ for all $i \in I$, therefore also $b_{ip} \in \sigma_{ip}$.
        If $b \in \Sp(\sigma)$ then, as $S_{-l,l}(b^{-1}), S_{-p,p}(f^{-1}) \in \Gamma_{-p}$, also
        $S_{-p,p}(b^{-1}) \in \Gamma_{-p}$.
        
        3. Finally, if $a_{is} \in J$ for all $i \in I'$ then 
        it follows by \eqref{eq:lemma:spgen:long:local:not:selfconj:b:J}
        that $b_{ij} \equiv \delta_{ij} \mod J$ whenever $i \in I'$ or $-j \in I'$. This yields 
        by Corollary \ref{cor:spgen:jacobson:not:selfconj} the inclusion 
        \eqref{eq:lemma:spgen:long:local:not:selfconj:statement:1} for all $i \in I$. If $b \in \Sp(\sigma)$
        then for the same reason we get the inclusion \eqref{eq:lemma:spgen:long:local:not:selfconj:statement:2}.
    \end{proof}
\end{lemma}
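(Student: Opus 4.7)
The plan is to mimic the three-case analysis in the proof of Lemma \ref{lemma:spgen:short:local:not:selfconj}, exploiting the simpler structure of a long root element. The key identity is
\[
    b_{ij} = \delta_{ij} + a_{is}\,\alpha\, a'_{-s,j} = \delta_{ij} \pm a_{is}\,\alpha\, a_{-j,s},
\]
which shows that $b_{ij} \equiv \delta_{ij} \mod J$ whenever either $a_{is}$ or $a_{-j,s}$ lies in $J$. The target is to place $b$, or a suitable conjugate of it by an explicit elementary matrix, into the hypotheses of Corollary \ref{cor:spgen:jacobson:not:selfconj}, which then delivers both $b_{ip} \in \sigma_{ip}$ for all $i$ and $S_{-p,p}(b^{-1}) \in \Gamma_{-p}$ in one stroke. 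Since $R$ is local, each entry $a_{is}$ with $i \in I' = \{p,q,h,t,l\}$ is either invertible or contained in $J$, and I would split into three cases by counting invertible entries.

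In the first case, when $a_{is} \in J$ for every $i \in I'$, the identity above immediately yields $b_{ij} \equiv \delta_{ij} \mod J$ whenever $i \in I'$ or $-j \in I'$, and Corollary \ref{cor:spgen:jacobson:not:selfconj} applied to $b$ and the quintuple $(p,q,h,t,l)$ delivers both conclusions directly. In the second case, when some $a_{js}$ with $j \in I' \setminus \{p\}$ is invertible, I may assume $j = l$ and pre-multiply $a$ by
\[
    g_1 = T_{pl}(-a_{ps}a_{ls}^{-1})\, T_{ql}(-a_{qs}a_{ls}^{-1})\, T_{hl}(-a_{hs}a_{ls}^{-1})\, T_{tl}(-a_{ts}a_{ls}^{-1})
\]
to produce a matrix $c = g_1 a$ with $c_{is} = 0$ for $i \in \{p,q,h,t\}$. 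The corresponding long root element $f = c T_{s,-s}(\alpha) c^{-1} = g_1 b g_1^{-1}$ then falls into the first case, so Corollary \ref{cor:spgen:jacobson:not:selfconj} gives $f_{ip} \in \sigma_{ip}$ and $S_{-p,p}(f^{-1}) \in \Gamma_{-p}$. Transferring back via $b = g_1^{-1} f g_1$ together with Corollary \ref{cor:symplss:length:of:left:mult:by:transv} yields the desired inclusions for $b$, since the explicit shape of $g_1$ makes all correction terms manifestly land in $\sigma_{ip}$ and $\Gamma_{-p}$ respectively.

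The third case, when $a_{ps}$ is the only invertible entry in $\{a_{is} : i \in I'\}$, reduces to the second by replacing $a$ with $T_{lp}(1)\, a$, whose $(l,s)$-entry becomes $a_{ls} + a_{ps} \in J + R^* \subseteq R^*$. The only slightly delicate point is the bookkeeping when transferring $S_{-p,p}(f^{-1}) \in \Gamma_{-p}$ back to $S_{-p,p}(b^{-1})$ across the conjugation by $T_{lp}(1)$: Corollary \ref{cor:symplss:length:of:left:mult:by:transv} produces a cross term of the form $S_{-l,l}(b^{-1})$, which I would absorb by invoking the second case a second time with the roles of $p$ and $l$ swapped (still using $a_{ps} \in R^*$ as the invertibility hypothesis) to obtain $S_{-l,l}(b^{-1}) \in \Gamma_{-l} = \Gamma_{-p}$. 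Beyond this small bit of accounting the proof is entirely mechanical, and the substantive work is already encapsulated in Corollary \ref{cor:spgen:jacobson:not:selfconj}.
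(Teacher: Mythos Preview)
Your proposal is correct and follows essentially the same three-case argument as the paper, merely reordering the cases (your cases 1, 2, 3 correspond to the paper's cases 3, 1, 2). The key tools---the identity $b_{ij} = \delta_{ij} \pm a_{is}\alpha a_{-j,s}$, the clearing matrix $g_1$, the shift $T_{lp}(1)$, and the appeal to Corollary~\ref{cor:spgen:jacobson:not:selfconj} together with Corollary~\ref{cor:symplss:length:of:left:mult:by:transv}---are identical, and your handling of the cross term $S_{-l,l}(b^{-1})$ in the third case matches the paper's exactly.
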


\section{Localization}
\label{sec:spgen:localization}

In this section we will prove Theorem \ref{theorem:spgen:main} for any commutative ring. 
We will first show that it's enough to prove our theorem for a Noetherian ground ring. The
general result follows almost immediately from the Noetherian case. For the case of a Noetherian
ground ring we use the following scheme to prove that 
$H \le \Transp_{\Sp(2n,R)}(\Ep(\sigma, \Gamma), \Sp(\sigma, \Gamma)$,
where $(\sigma, \Gamma)$ is the form net of ideals over $R$, associated with $H$ (cf. Lemma \ref{lemma:symplss:ass:net:is:a:net} of Section \ref{sec:symplss:assoc:net:and:norm}).
First, pick a maximal ideal $\m$ of $R$ and consider the localization $R_\m = (R \setminus \m)^{-1} R$
of the ring $R$. This is a local ring. More importnantly, there exists an element 
$x_0 \in R \setminus \m$ such that the homomorphism $\M(F_\m)$ induced by the 
localization morphism $F_\m : R \longrightarrow R_\m, \xi \mapsto \frac{\xi}{1}$ is injective on 
$\Sp(2n, R, x_0 R)$. This allows us to push the subgroup $H$ together with the form net of ideals
$(\sigma, \Gamma)$ into the ring $R_\m$, use Theorem \ref{theorem:spgen:mainforlocal} there
(observe that $(R_\m, F_\m(R), F_\m(R\setminus \m))$ becomes a standard setting and the cooridinate-wise image of
$(\sigma,\Gamma)$ under $F_\m$ becomes a form net $F_\m(R\setminus \m)$-associated with $\M(F_\m)(H)$)
and pull the resulting inclusion back to the ring $R$ using injectivity of $M(F_\m)$ and the idea
of \textit{patching}, which amounts to being able to get the unity of a ring as a linear combination
of elements of any subset of this ring which is unimodular, i.e. not contained in any maximal ideal. The 
uniqueness part of Theorem \ref{theorem:spgen:main} follows trivially from 
Theorem \ref{theorem:symplss:transpdescr}. 

\paragraph{Noetherian reduction.}\ \\
Suppose Theorem \ref{theorem:spgen:main} holds for any Noetherian ground ring.
It is a well known fact that every commutative ring $R$ is a direct limit $\varinjlim R'$ of its Noetherian
subrings $R'$. Fix a commutative ring $R$, a unitary equivalence relation $\nu$ such that $h(\nu) \geq (4,5)$
and a subgroup $H$ such that $\Ep(\nu, R) \le H$. Let $(\sigma, \Gamma)$ denote the exact major
form net of ideals associated with $H$, cf. Lemma \ref{lemma:symplss:ass:net:is:a:net}.
For any Noetherian subring $R'$ of $R$
set $H' = H \cap \Sp(2n,R')$. Then $\Ep(\nu, R') \le \Ep(\nu, R) \cap \Sp(2n, R') \le H'$.
By Lemma \ref{lemma:symplss:ass:net:is:a:net} there exists an exact major form net
of ideals $(\sigma', \Gamma')$ over $R'$ associated with $H'$. By the construction in
Lemma \ref{lemma:symplss:ass:net:is:a:net} of a
form net associated with a subgroup it follows that if $R' \le R''$ then
$(\sigma', \Gamma') \le (\sigma'', \Gamma'')$. Clearly $\Sp(\sigma', \Gamma') \le \Sp(\sigma'', \Gamma'')$.
As any element $g$ of $\Sp(2n, R)$ is contained in $\Sp(2n, R'')$ for some Noetherian subring $R''$ of $R$
such that $R' \le R''$, it follows that
\begin{equation}
    \label{eq:sec:spgen:localization:inter}
    \Sp(\sigma, \Gamma) = \varinjlim_{R' \text{ is Noetherian}} \Sp(\sigma', \Gamma').
\end{equation}
Pick any $a \in H$ and $T_{sr}(\xi) \in \Ep(\sigma, \Gamma)$. There exists 
a Noetherian subring $R'$ of $R$ such that $a, T_{sr}(\xi) \in \Sp(2n, R')$.
Clearly, $a \in H'$ and $T_{sr}(\xi) \in \Ep(\sigma', \Gamma')$. By assumption,
Theorem \ref{theorem:spgen:main} holds for the ground ring $R'$. Therefore 
\begin{equation}
    \label{eq:sec:spgen:localization:in}
    a T_{sr}(\xi) a^{-1} \in \Sp(\sigma', \Gamma'). 
\end{equation}
Moreover the inclusion \eqref{eq:sec:spgen:localization:in} holds for any Noetherian
subring $R''$ such that $R' \le R''$.
Combining \eqref{eq:sec:spgen:localization:in} with \eqref{eq:sec:spgen:localization:inter}
we deduce that 
\[
    \Ep(\sigma, \Gamma) \le H \le \Transp_{\Sp(2n, R)}(\Ep(\sigma,\Gamma), \Sp(\sigma, \Gamma))
    = N_{\Sp(2n,R)}(\Ep(\sigma, \Gamma), \Sp(\sigma, \Gamma)),
\] 
where the last equality is due to Theorem \ref{theorem:symplss:transpdescr}.
Therefore we only need to prove that the sandwich inclusions in Theorem \ref{theorem:spgen:main} are
satisfied for the form net of ideals associated with the subgroup $H$ for 
a Noetherian ground ring.

\begin{proof}[\upshape\bfseries Proof of Theorem \ref{theorem:spgen:main}: existence]
	The remark above justifies the assumption that $R$ is Noetherian.
    Let $(\sigma, \Gamma)$ be the form net of ideals associated with $H$, cf. Lemma
    \ref{lemma:symplss:ass:net:is:a:net}.
    Pick an element $a$ in $H$, a $(\sigma, \Gamma)$-elementary transvection $T_{sr}(\xi)$,
    long or short, 
    and denote by $b$ the corresponding root element $a T_{sr}(\xi) a^{-1}$. Our goal is to show that 
    $b$ is contained in $\Sp(\sigma, \Gamma)$. For each $i,j \in I$ put
    \begin{align*}
        X_{ij} &= \{ \xi \in R \;|\; \xi b_{ij} \in \sigma_{ij} \} \\
        Z_{i} &= \{ \xi \in R \;|\; \xi^2 S_{i,-i}(b) \in \Gamma_i \}.
    \end{align*}
    We will show that the sets $X_{ij}$ and $Z_i$ are unimodular,
    i.e. generate the unit ideal $R$, for all $i,j \in I$. Fix a maximal ideal
    $\m$ of $R$ and let $S$ denote the compliment $R \setminus \m$ of $\m$ in $R$. Let $R_\m$ denote
    the localization $S^{-1} R$ of the ring $R$ at the multiplicative system $S$ and $F_\m$
    the corresponding localization morphism $R \rightarrow R_\m$. 
	Recall that for $\zeta \in R$ and $s \in S$ we denote $F_\m(\zeta)\cdot F_\m(s)^{-1}$
	by $\frac{\zeta}{s}$.
    Let $R'_\m$ denote the image of $R$ under $F_\m$ and $S_\m$ denote the image of 
    $S$ under $F_\m$. We have already discussed at the end of Section \ref{sec:spgen:stsetting}
    that $(R_\m, R'_\m, S_\m)$ is a standard setting. Let $H_\m$ denote the image of $H$ under $\M(F_\m)$. 
    Clearly $\Ep(\nu, R'_\m) \le H_\m$.
    
    We will show now that there exists an element $x_0 \in S$ such that $F_\m$ is 
    injective on $x_0 R$. In fact, this is exactly the point of the Noetherian reduction.
    For an arbitrary ground ring this fail to be true in general.
    For each $x \in S$ set $Ann(x) = \{ \xi \in R \;|\; x \xi = 0 \}$. As $R$ is
    Noetherian, the set of ideals $\{ Ann(x) \;|\; x \in S \}$ contains a maximal
    element $Ann(x_0)$. Let $\xi x_0, \zeta x_0$ be two arbitrary elements
    of $x_0 R$. Suppose $F_\m(\xi x_0) = F_\m(\zeta x_0)$. Then there exists an element
    $y \in S$ such that $y x_0 (\xi - \zeta) = 0$. Therefore $\xi - \zeta \in Ann(y x_0) \ge Ann(x_0)$
    and by the maximality of $Ann(x_0)$ it follows that $\xi - \zeta \in Ann(x_0)$.
    Consequently $\xi x_0 = \zeta x_0$. Therefore the localization morphism $F_\m$
    is injective on $x_0 R$. This allows us to \textit{lift} transections from $H_\m$ to $H$
    up to multiplication of the parameter by an element of $S$. 
    
    \begin{proposition}
    \label{prop:spgen:goingup}
    Let $\zeta \in R$ and $x \in S$. If $T_{pq}(\frac{\zeta}{x}) \in H_\m$ then
    \[
        T_{pq}(x_0^{(1+\delta_{p,-q})} \cdot x^{\delta_{p,-q}} \cdot \zeta) \in H.
    \]
    \begin{proof}
        If $p \sim q$, the inclusion
        $T_{pq}(x_0^{(1+\delta_{p,-q})} \cdot x^{\delta_{p,-q}} \cdot \zeta) \in H$
        is trivial. Assume $p \nsim q$ and $p \neq -q$. There exists another
        index $h \sim q$ such that $h \neq \pm p, \pm q$. By the Steinberg relation (R4)
        \[
            T_{pq}(F_\m(\zeta)) = [[T_{pq}(\frac{\zeta}{x}), T_{qh}(\frac{x}{1})], T_{hq}(1)] \in H_\m.
        \]
        Pick any pre-image $g$ of the matrix $T_{pq}(F_\m(\zeta))$ contained in $H$.
        Note, that $g$ neither has to coincide with $T_{pq}(\zeta)$, nor has to be a
        transvection at all. Nevertheless,
		by (R4) we get
        \[
            T_{pq}(F_\m(\zeta x_0)) = \M(F_\m)([[g, T_{qh}(x_0)], T_{hq}(1)]) \in 
                \M(F_\m)(\Sp(2n, R, x_0 R) \cap H).
        \]
        As $\M(F_\m)$ is injective on $\Sp(2n, R, x_0 R)$, it follows that 
        $T_{pq}(\zeta x_0) \in H$.
        
        Assume $q = -p$ and $p \nsim -p$. Pick two more indices $h,t \in I$ such that
        $(p,h,t)$ is an $\A$-type base triple. By the Steinberg relations (R3), (R4) and (R6) we get
        \begin{align*}
            T_{h,-h}(-\varepsilon_p \varepsilon_h F_\m(\zeta x)) &= 
            [T_{p,-p}(\frac{\zeta}{x}), T_{-p,-h}(\frac{x}{1})] \\
            &\quad \times
            [T_{pt}(-1), [T_{tp}(1), [T_{p,-p}(\frac{\zeta}{x}), T_{-p,-h}(\frac{x}{1})]]] \in H_\m.
        \end{align*}
        As before, pick any pre-image $g$ of $T_{h,-h}(-\varepsilon_p \varepsilon_h F_\m(\zeta x))$,
        which is
        contained in $H$. Then by the Steinberg relations (R3), (R4) and (R6)
        \begin{align*}
            T_{p,-p}(F_\m(\zeta x x_0^2)) &=
            \M(F_\m)([g, T_{-h,-p}(x_0)] [T_{ht}(-1), [T_{th}(1), [g, T_{-h,-p}(1)]]]) \\
            &\in \M(F_\m)(\Sp(2n, R, x_0 R) \cap H).
        \end{align*}
        By the injectivity of $\M(F_\m)$ on $\Sp(2n, R, x_0 R)$ it follows that 
        $T_{p,-p}(\zeta x x_0^2) \in H$.
    \end{proof}
    \end{proposition}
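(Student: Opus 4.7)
The plan is to use the Steinberg relations to manipulate the given element $T_{pq}(\tfrac{\zeta}{x}) \in H_\m$ together with elementary transvections from $\Ep(\nu, R'_\m) \le H_\m$ in order to (a) clear the denominator $x$ in the parameter, and (b) produce an element that lies in the principal congruence subgroup $\Sp(2n, R_\m, F_\m(x_0) R_\m)$, so that we can transfer the statement back to $R$ via the injectivity of $\M(F_\m)$ on $\Sp(2n, R, x_0 R)$. Naturally split into the trivial case $p \sim q$ (where $T_{pq}(\text{anything}) \in \Ep(\nu, R) \le H$), the short-transvection case $p \nsim q$, $p \neq -q$, and the long-transvection case $q = -p$, $p \nsim -p$.

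For the short case, pick $h \sim q$ with $h \neq \pm p, \pm q$, which exists because $h(\nu) \geq (4,5)$. Steinberg relation (R4) gives
\[
    T_{pq}(F_\m(\zeta)) \;=\; \bigl[[T_{pq}(\tfrac{\zeta}{x}), T_{qh}(\tfrac{x}{1})], T_{hq}(1)\bigr] \;\in\; H_\m,
\]
since $T_{qh}(\tfrac{x}{1}), T_{hq}(1) \in \Ep(\nu, R'_\m) \le H_\m$. Choose any preimage $g \in H$ of this matrix; we are not claiming $g$ is a transvection, only that it maps to $T_{pq}(F_\m(\zeta))$ under $\M(F_\m)$. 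Now form
\[
    b \;=\; \bigl[[g, T_{qh}(x_0)], T_{hq}(1)\bigr] \;\in\; H.
\]
Because $T_{qh}(x_0) \in \Sp(2n, R, x_0 R)$ and $\Sp(2n, R, x_0 R)$ is normal in $\Sp(2n, R)$, the inner commutator lies in $\Sp(2n, R, x_0 R)$; normality applied once more shows $b \in \Sp(2n, R, x_0 R)$. A direct application of (R4) in $\M(F_\m)$ gives $\M(F_\m)(b) = T_{pq}(F_\m(\zeta x_0)) = \M(F_\m)(T_{pq}(\zeta x_0))$. Since $T_{pq}(\zeta x_0) \in \Sp(2n, R, x_0 R)$ and $\M(F_\m)$ is injective on this congruence subgroup, we conclude $b = T_{pq}(\zeta x_0) \in H$, which is the desired inclusion (since $\delta_{p,-q} = 0$).

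For the long case $q = -p$, $p \nsim -p$, the non-self-conjugate class of $p$ has at least five elements, so we can enlarge $(p)$ to an $\A$-type base triple $(p, h, t)$. A naive use of (R6) on $[T_{p,-p}(\tfrac{\zeta}{x}), T_{-p,-h}(\tfrac{x}{1})]$ produces the mixed product of a long and a short transvection. To strip off the short factor, build a second commutator involving $T_{pt}(-1)$ and $T_{tp}(1)$ that, via (R3) and (R4), reproduces precisely the short-transvection factor with opposite sign; multiplying the two elements cancels the short part and leaves the pure long transvection $T_{h,-h}(-\varepsilon_p \varepsilon_h F_\m(\zeta x)) \in H_\m$. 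Now one repeats the short-case strategy: pick a preimage $g \in H$, form a parallel expression replacing the $R_\m$-scalars by multiples of $x_0$ (so that every factor sits in $\Sp(2n, R, x_0 R)$), and check that the result maps under $\M(F_\m)$ to $T_{p,-p}(F_\m(\zeta x x_0^2))$. The injectivity of $\M(F_\m)$ on $\Sp(2n, R, x_0 R)$ then yields $T_{p,-p}(\zeta x x_0^2) \in H$, matching $x_0^{1+\delta_{p,-q}} x^{\delta_{p,-q}} \zeta$ for $q = -p$.

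The main obstacle I expect is the bookkeeping in the long case. One must simultaneously: (i) isolate a pure long transvection from the Steinberg commutator, which forces a double-commutator trick using the auxiliary index $t$; (ii) ensure that the analogous expression obtained after replacing fractions by integral multiples of $x_0$ truly lies in $\Sp(2n, R, x_0 R)$, which is what dictates the exponents of $x_0$ in the conclusion; and (iii) keep track of signs from $\varepsilon_p, \varepsilon_h$ coming from (R6). Once these are handled, injectivity of $\M(F_\m)$ on the principal congruence subgroup closes the argument uniformly in both cases.
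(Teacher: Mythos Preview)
Your proposal is correct and follows essentially the same approach as the paper's proof: the same three-way case split, the same Steinberg-relation manipulations (including the double-commutator trick with the auxiliary index $t$ to isolate a pure long transvection in the $q=-p$ case), the same preimage-in-$H$ step, and the same appeal to injectivity of $\M(F_\m)$ on $\Sp(2n,R,x_0 R)$. Your explicit justification via normality of the principal congruence subgroup for why the commutator expression lands in $\Sp(2n,R,x_0 R)$ is a detail the paper leaves implicit, but otherwise the arguments coincide.
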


	Let $(\sigma'_\m, \Gamma'_\m)$ denote the coordinate-wise image of $(\sigma, \Gamma)$
    under $F_\m$. It is easy to see that $(\sigma'_\m, \Gamma'_\m)$ is an exact major form
    net of ideals over $R'_\m$. Proposition \ref{prop:spgen:goingup} allows us to conclude
    that $(\sigma'_\m, \Gamma'_\m)$ is $S_\m$-associated, cf. Section \ref{sec:spgen:stsetting}, with $H_\m$. Indeed, the
    inclusion $\Ep(\sigma'_\m, \Gamma'_\m) \le H_\m$ is obvious. Suppose $T_{pq}(\frac{\xi}{x}) \in H_\m$
    for some elementary transvection $T_{pq}(\frac{\xi}{x})$ in $\Ep(2n, R_\m)$. By 
    Proposition \ref{prop:spgen:goingup} it follows that 
    $T_{pq}(\xi x^{\delta_{p,-q}} x_0^{(1+\delta_{p,-q})}) \in H$.
    Thus $\xi x^{\delta_{p,-q}} x_0^{(1+\delta_{p,-q})} \in (\sigma, \Gamma)_{pq}$ and 
    $F_\m(\xi) F_\m(x^{\delta_{p,-q}} x_0^{(1+\delta_{p,-q})}) \in (\sigma'_\m, \Gamma'_\m)_{pq}$. Therefore
    $(\sigma'_\m, \Gamma'_\m)$ is indeed $S_\m$-associated with $H_\m$.    
    
    Let $(\sigma_\m, \Gamma_\m)$ denote the $S_\m$-closure of $(\sigma'_\m, \Gamma'_\m)$ in $R_\m$.
	It follows by Theorem \ref{theorem:spgen:mainforlocal} that 
	$\M(F_\m)(b) \in \Sp(\sigma_\m, \Gamma_\m)$.

    Now we will show that each set $X_{ij}$ and $Z_i$ contains an element of $S$ and thus each is not contained in $\m$.
    For $i \sim j$ it is easy to see that $X_{ij} = R$, therefore we may assume that $i \nsim j$. Let
    $i \neq -j$. As $F_\m(b_{ij}) \in (\sigma_\m)_{ij}$, there exists an element $x \in S$ such that
    $T_{ij}(F_\m(b_{ij} x)) \in H_\m$. By Proposition \ref{prop:spgen:goingup} it follows that $b_{ij} x x_0 \in \sigma_{ij}$
    which means that $x x_0 \in X_{ij}$. If $j = -i$ then there exists an index $k \sim i$ such that
    $i \neq \pm k$. Therefore $F_\m(b_{i,-i}) \in (\sigma_\m)_{i,-i} = (\sigma_\m)_{i,-k}$. Exactly as in the previous case
    we get that $b_{ij} x x_0 \in \sigma_{i,-k} = \sigma_{i,-i}$ for some $x \in S$.
    Finally, assume $i \nsim -i$. As $F_\m(S_{i,-i}(b)) \in (\Gamma_\m)_i$, there exists an element
    $x \in S$ such that $T_{i,-i}(F_\m(S_{i,-i}(b) x^2)) \in H_\m$. By Proposition \ref{prop:spgen:goingup} it follows that
    $T_{i,-i}(F_\m(S_{i,-i}(b) x^2 x_0^2)) \in H$, which yields that $x_0 x \in Z_i$.
    
    Fix some indices $i$ and $j$. We have shown that the set $X_{ij}$ is unimodular, therefore
    $b_{ij} \in \langle X_{ij} \rangle b_{ij} \le \sigma_{ij}$. Thus $b \in \Sp(\sigma)$.
    We have also shown that the set $Z_{i}$ is unimodular. Therefore there exist elements $\zeta_1, \dots, \zeta_k \in Z_i$
    and elements $\xi_1, \dots, \xi_k \in R$ such that $\sum_{t = 1}^k \xi_t \zeta_t = 1$. Thus
    $\sum_{t=1}^k \xi_t^2 \zeta_t^2 \equiv 1 \mod 2R$ and
    \[
        S_{i,-i}(b) \in \sum_{t = 1}^k \xi_t^2 (\zeta_t^2 S_{i,-i}(b)) + 2 R S_{i,-i}(b) \le 
        \Gamma_i + 2 \sigma_{i,-i} \le \Gamma_i.
    \]
    Summing up, $b \in \Sp(\sigma, \Gamma)$. This completes the proof of existence of such
    an exact form net of ideals $(\sigma, \Gamma)$ that
    \begin{equation}
        \label{eq:theorem:spgen:main:1}
        \Ep(\sigma, \Gamma) \le H \le \Transp_{\Sp(2n, R)}(\Ep(\sigma, \Gamma), \Sp(\sigma, \Gamma)).
    \end{equation}    
\end{proof}

\begin{proof}[\upshape\bfseries Proof of Theorem \ref{theorem:spgen:main}: uniqness]
	Now we will show that an exact form net of ideals $(\sigma, \Gamma)$ satisfying
	\eqref{eq:theorem:spgen:main:1} is unique. Assume the contrary: 
    let $(\tau, B)$ be an exact major form net over $(R, \Lambda)$ such that 
    \begin{equation*}
        \Ep(\tau, B) \le H \le \Transp_{\Sp(2n, R)}(\Ep(\tau,B), \Sp(\tau, B)),
    \end{equation*}
    but $(\tau, B)$ is not equal to $(\sigma, \Gamma)$. As $(\sigma, \Gamma)$ is maximal
    among exact form nets such that $\Ep(\sigma, \Gamma) \le H$, it follows
    that $(\tau, b) \le (\sigma, \Gamma)$. Pick any
    $\xi \in (\sigma, \Gamma)_{ij}$. Then $T_{ij}(\xi) \in H \le 
    \Transp_{\Sp(2n, R)}(\Ep(\tau,B), \Sp(\tau, B))$.
    First, assume $i \neq -j$. 
    By property (T1) of Theorem \ref{theorem:symplss:transpdescr} applied to the net $(\tau, B)$ it follows,
    because $(T_{ij}(\xi))_{jj} = 1$, that
    \[
        \xi = (T_{ij}(\xi))_{ij} \cdot 1 \cdot (T_{ij}(\xi))_{jj} \le 
        (T_{ij}(\xi))_{ij} \cdot \tau_{jj} \cdot (T_{ij}(\xi))_{jj} \le \tau_{ij}.
    \]
    Therefore $\tau_{ij} = \sigma_{ij}$ for all $i \neq -j$. If $i = -j$ then by property
    (T2) of Theorem \ref{theorem:symplss:transpdescr}
    \[
        \xi = (T_{i,-i}(\xi))_{ii}^2 \cdot 1^2 \cdot S_{i,-i}(T_{i,-i}(\xi)) \le
        (T_{i,-i}(\xi))_{ii}^2 \cdot \tau_{ii}^\rectangled{2} \cdot S_{i,-i}(T_{i,-i}(\xi)) \le B_i.
    \]
    Therefore $B_i = \Gamma_i$ for all $i \in I$. Finally as both form nets $(\sigma, \Gamma)$ and $(\tau, B)$
    are exact, it follows that $(\sigma, \Gamma) = (\tau, B)$.
\end{proof}

\bibliography{../../references-eng}

\end{document}